\theoremstyle{thmstyleone}%
\newtheorem{theorem}{Theorem}[section]
\newtheorem{proposition}[theorem]{Proposition}%
\newtheorem{lemma}{Lemma}[section]%
\theoremstyle{thmstyletwo}%
\theoremstyle{thmstylethree}%
\newtheorem{definition}{Definition}[section]%
\numberwithin{equation}{section}
\begin{document}

\title[Article Title]{Transposed $\delta$-Poisson (super)algebra Structures on the Virasoro-like algebra and its  Kantor Lie-double}


\author*[1]{\fnm{Jie} \sur{Lin}}\email{linj022@126.com}

\author[2]{\fnm{Chengyu} \sur{Liu}}\email{2563253950@qq.com}

\author[2]{\fnm{Jingjing} \sur{Jiang}}\email{745876258@qq.com}

\affil*[1]{\orgdiv{Sino-European Institute of Aviation Engineering }, \orgname{Civil Aviation University of China},  \city{Tianjin}, \postcode{300300},  \country{China}}

\affil[2]{\orgdiv{College of Science}, \orgname{Civil Aviation University of China}, \city{Tianjin}, \postcode{300300},  \country{China}}



\abstract{We undertake a study of transposed $\delta$-Poisson (super)algebra structures on the Virasoro-like algebra and its Kantor Lie-double —— the latter being constructed via Kantor’s procedure. This work leads to the finding that, whereas non-trivial $\delta$-derivations exist solely at $\delta=1$, non-trivial transposed $\delta$-Poisson (super)algebra structures are entirely absent.}

\keywords{transposed $\delta$-Poisson (super)algebra, Kantor Lie-double, Virasoro-like algebra, $\delta$-(super)derivation, Lie superalgebra}



\maketitle

\section{Introduction}\label{sec1}


Transposed Poisson algebras was introduced in \cite{bib1} as a dual counterpart to the traditional Poisson algebra. This duality was established through the interchange of the roles of the two binary operations within the Leibniz rule that defines the Poisson algebra. In recent years, much work has been done on the relationship between transposed Poisson algebras and other algebras. It has proven that every transposed Poisson algebra is an $\mathbb F$-manifold, as established in \cite{bib2}. Recently, Fernández Ouaridi demonstrated in \cite{bib3} that a transposed Poisson algebra is simple if and only if its associated Lie bracket is simple. The authors in \cite{bib28} described $\frac 12$-derivations of solvable Lie algebras with a filiform nilradical and constructed nontrivial transposed Poisson algebras with solvable Lie algebras. 
Additionally, Beites, Ferreira, and Kaygorodov delved into the connections between transposed Poisson algebras and various other algebraic structures, such as Hom-Lie structures, quasi-automorphisms, and Poisson $n$-Lie algebras, as discussed in \cite{bib4}. They also proposed a set of open questions regarding transposed Poisson algebras for future exploration. 

Furthermore, Ferreira, Kaygorodov, and Lopatkin established a link between $\frac{1}{2}$-derivations of Lie algebras and transposed Poisson algebras in their work \cite{bib5}. This connection provides  a systematic approach to identifying all transposed Poisson structures associated with a given Lie algebra. Using this approach, they examined transposed Poisson structures within a range of Lie algebras, including the Witt algebra, Virasoro algebra, thin Lie algebra, solvable Lie algebra with an abelian nilpotent radical, and several others. This concept has been subsequently extended to characterize transposed Poisson structures on a spectrum of algebras and superalgebras, as detailed in references \cite{bib6}, \cite{bib7}, \cite{bib8} and \cite{bib9}, and  beyond. Recently, the applications and advancements in the study of $\frac{1}{2}$-derivations and transposed Poisson algebra structures  associated with specific Lie algebras have been discussed in \cite{bib10}. The text discusses the application of $\frac{1}{2}$-derivations in specific algebraic structures, as well as how to generalize these concepts to more general algebraic structures, including $n$-ary algebras and Hom-algebras.

In the paper\cite{bib13}, two more general algebras: $\delta$-Poisson and transposed $\delta$-Poisson algebras were studied. They are closely related to $\delta$-derivations introduced by Filippov in \cite{bib22}. It was shown in \cite{bib13} that transposed $\delta$-Poisson algebras share many known
similarities with those studied in \cite{bib1}. In \cite{bib14}, the authors classify all transposed $\delta$-Poisson structures on null-filiform associative algebras and provide a complete classification of transposed $\delta$-Poisson algebras corresponding to each value of the parameter $\delta$. 

The Virasoro algebra plays an important role in many braches in both mathematics and physics, it can be regarded as the universal central extension of the Lie algebra of derivations on the Laurent polynomial ring in one variable. In the paper \cite{bib21}, it was proved that the Virasoro algebra does not admit nontrivial transposed $\delta$-Poisson structures, and the nontrivial  transposed
$\delta$-Poisson structures on the Lie algebras $W(a, b)$ were described.

As an analogue of the Virasoro algebra, the authors in the paper \cite{bib23} introduced the Virasoro-like algebra, which is the skew derivation Lie algebra over the Laurent polynomial ring in two commuting variables. It is also a kind of $W_{\infty}$ algebras in Physics. The structure and representation theory on the Virasoro-like algebra have been studied by many scholars, for instance, see \cite{bib19}, \cite{bib24}, \cite{bib25}, \cite{bib26}, \cite{bib27} and the references therein.

Let us recall the details for the Virasoro-like algebra.
The Virasoro-like algebra $V$ can be regarded as a vector field over the ring of Laurent polynomials $\mathbb C[x_1^{\pm}, x_2^{\pm}]$ with complex coefficients. Let $L_{\bf m}=x_1^{m_1}x_2^{m_2}(m_2x_1\frac{\partial}{\partial x_1}-m_1x_2\frac{\partial}{\partial x_2}),$ where ${\bf m}=(m_1, m_2)\in\mathbb Z^2\setminus\{{\bf 0}\},$ then $V=\left<L_{\bf m}|{\bf m}\in\mathbb Z^2\setminus\{{\bf 0}\}\right>$ forms a Lie algebra with respect to the following defined operation $[\cdot,\cdot]$:
 \begin{equation}\label{eq1.01}
     [L_{\bf m}, L_{\bf n}]=\det\binom{\textbf{n}}{\textbf{m}}L_{\bf m+n},
     \end{equation}
where $\det\binom{\textbf{n}}{\textbf{m}}=n_1m_2-n_2m_1.$
If we set $L_{\bf 0}=0$ by convention and define $L_{\bf m}\cdot L_{\bf n}=L_{\bf m+n}$ for all ${\bf m, n}\in \mathbb Z^2\setminus \{{\bf 0}\}$, then $(V, \cdot, [\cdot, \cdot])$ forms a Poisson algebra. According to Kantor's definition (see \cite{bib12}), $(V, \cdot, [\cdot, \cdot])$ is also a generalized Poisson bracket. Then we know from \cite{bib12} that the algebra $L(V)=V\oplus V^s$, where $V^s=\left<G_{\bf m}\mid {\bf m}\in\mathbb Z^2\setminus\{{\bf 0}\}\right>,$ is a Lie superalgebra with respect to the following products, we call it the Kantor Lie-double of the Virasoro-like algebra $V$: 
\begin{eqnarray}
    [L_{\bf m}, L_{\bf n}]&=&\det\binom{\textbf{n}}{\textbf{m}}L_{\bf m+n}\label{eq1.1}\\
   \left[L_{\bf m}, G_{\bf n}\right]&=&\det\binom{\textbf{n}}{\textbf{m}}G_{\bf m+n}=[G_{\bf m}, L_{\bf n}]\label{eq1.2}\\ 
    \left[G_{\bf m}, G_{\bf n}\right]&=&L_{\bf m+n}\label{eq1.3}
\end{eqnarray}

As the supersymmetric extension of Virasoro-like algebras, the super-Virasoro-like algebras serve as pivotal mathematical instruments in the formulation of supersymmetric field theories. They offer an algebraic framework essential for characterizing physical systems that exhibit supersymmetry. These algebras act as a conduit, linking abstract mathematical constructs like Lie superalgebras and vertex algebras with concrete physical theories such as superstring theory and conformal field theory. In this manner, they not only enrich the theoretical framework of Lie superalgebras but also catalyze progress in the representation theory of infinite-dimensional Lie superalgebras. 

The paper is organized as follows. Section 2 is dedicated to the main definitions and Lemmas used in the paper. Section3 describes the transposed $\delta$-Poisson structures on the Virasoro-like algebras. Section 4 describes the transposed $\delta$-Poisson structures on the Kantor Lie-double of the Virasoro-like algebra.

\section{Preliminaries}\label{sec2}
\begin{definition}\textsuperscript{\cite{bib13}}\label{df2.1}
    Let $\delta$ be a fixed complex number. An algebra $(L, \cdot, [\cdot, \cdot])$ is defined to be transposed $\delta$-Poisson algebra, if $(L, \cdot)$ is a commutative associative algebra, $(L, [\cdot, \cdot])$ is a Lie algebra and the following identity holds: \begin{equation}\label{eq4.1}
        x\cdot[y, z]=\delta([x\cdot y, z]+[y, x\cdot z]), ~\forall ~x, y, z\in L.
    \end{equation}
\end{definition}

\begin{definition}\label{df4.1}
    Let $\delta$ be a fixed complex number, $L=L_{\overline 0}\oplus L_{\overline 1}$ be a $\mathbb Z_2$-graded vector space equipped with two bilinear operations $\cdot, [\cdot, \cdot]: L\otimes L\rightarrow L$, where $(L, \cdot)$ is a supercommutative associative superalgebra and $(L, [\cdot, \cdot])$ is a Lie superalgebra. The triple $(L, \cdot, [\cdot, \cdot])$ is called a transposed $\delta$-Poisson superalgebra if the following identity holds:
    \begin{equation}\label{eq2.2}
      x\cdot[y,  z]=\delta([x\cdot y, z]+(-1)^{|x||y|}[y, x\cdot z]), ~\forall ~x, y, z\in L_{\overline 0}\cup L_{\overline 1},   
    \end{equation}
     where $\left | x \right |\in \mathbb{Z}_2 $ denotes the degree of a homogeneous element $x\in L$.
\end{definition}

If we take $\delta=\frac 12$ in identity \eqref{eq4.1} and \eqref{eq2.2}, then we obtain the definitions of the transposed Poisson algebra and transposed Poisson superalgebra. A transposed $\delta$-Poisson (super)algebra $L$ is called trivial, if $L \cdot L= 0$ or $[L, L] = 0$.
\begin{definition}\label{df2.3}
    A homogeneous linear map $D: L \rightarrow L$ is called a $\delta$-derivation of a Lie algebra $(L, [\cdot, \cdot])$ if 
    \begin{equation}\label{eq2.3}
        D([x, y])=\delta ([D(x), y]+[x, D(y)]), ~\forall~ x, y\in L.
    \end{equation}
\end{definition}
\begin{definition}\label{df2.4}
    A homogeneous linear map $D: L \rightarrow L$ is called a $\delta$-superderivation of a Lie superalgebra $(L, [\cdot, \cdot])$ if 
    \begin{equation}\label{eq2.3}
        D([x, y])=\delta ([D(x), y]+(-1)^{|D||x|}[x, D(y)]), ~\forall~ x, y\in L_{\overline 0}\cup L_{\overline 1}.
    \end{equation}
\end{definition}

The basic example of a $\frac 12$-(super)derivation is the multiplication by a field element. Such $\frac 12$-(super)derivations will be called trivial. 

For a given Lie (super)algebra $L$, the space of all $\delta$-(super)derivations of $L$ will be denoted by ${\Delta}(L)$; the space of all (super)derivations of $L$ will be denoted by ${\rm Der}(L)$.  For any element $x$ of $L$, we can define the map ${\rm ad}(x): L\rightarrow L$ by ${\rm ad}(x)=[x,y].$ The linear maps ${\rm ad}(x)$ forms an ideal of ${\rm Der}(L)$ called the inner (super)derivation algebra of $L$, denotedy by ${\rm ad}(L)$.

 The following is a similar result to that from Farnsteiner \cite{bib15}. 
\begin{lemma}\label{C} Let $G$ be an abelian group and $L=\underset{g \in G}{\oplus} L_{g}$ be a $G$-graded Lie algebra. Assuming that $L$ is finitely generated as a Lie algebra, it follows that
	$$\Delta(L)=\underset{g\in G}{\oplus} \Delta_{g}(L).$$
\end{lemma}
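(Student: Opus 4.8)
The plan is to decompose an arbitrary $\frac12$-derivation into homogeneous pieces, show each piece is again a $\frac12$-derivation, and then invoke finite generation to force the decomposition to be finite. Write $\Delta(L)$ for the space of $\frac12$-derivations and $\Delta_g(L)$ for those homogeneous of degree $g$ (so $\psi \in \Delta_g(L)$ means $\psi(L_h) \subseteq L_{h+g}$). Given $\varphi \in \Delta(L)$, I would first define, for each $g \in G$, a linear map $\varphi_g$ by setting, for homogeneous $x \in L_h$, the value $\varphi_g(x)$ equal to the $L_{g+h}$-component of $\varphi(x)$, and extending linearly. Then $\varphi_g$ is homogeneous of degree $g$ and, on each homogeneous element, $\varphi(x) = \sum_{g} \varphi_g(x)$ is a finite sum.

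The next step is to verify $\varphi_g \in \Delta_g(L)$ by comparing graded components in the defining identity. For $x \in L_h$ and $y \in L_k$ one has $\varphi([x,y]) = \sum_g \varphi_g([x,y])$ with $\varphi_g([x,y]) \in L_{g+h+k}$, while $\frac12([\varphi(x),y]+[x,\varphi(y)]) = \frac12\sum_g([\varphi_g(x),y]+[x,\varphi_g(y)])$, whose $g$-th summand also lies in $L_{g+h+k}$, since $[L_{g+h},L_k]$ and $[L_h,L_{g+k}]$ are both contained in $L_{g+h+k}$ (using that $G$ is abelian). Matching the components in degree $g+h+k$ yields $\varphi_g([x,y]) = \frac12([\varphi_g(x),y]+[x,\varphi_g(y)])$, so indeed $\varphi_g \in \Delta_g(L)$.

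Finite generation enters precisely to guarantee that only finitely many $\varphi_g$ are nonzero. The crucial auxiliary fact is that a $\frac12$-derivation is determined by its values on any generating set: if $\eta \in \Delta(L)$ vanishes on homogeneous generators $x_1,\dots,x_n$, then the identity $\eta([u,v]) = \frac12([\eta(u),v]+[u,\eta(v)])$ propagates the vanishing to every iterated bracket, hence to all of $L$, giving $\eta = 0$. Now I pick finite homogeneous generators $x_1,\dots,x_n$; each $\varphi(x_i)$ has only finitely many nonzero components, so $S = \{g : \varphi_g(x_i) \neq 0 \text{ for some } i\}$ is finite, and for $g \notin S$ the $\frac12$-derivation $\varphi_g$ vanishes on all generators and is therefore zero. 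Thus $\varphi = \sum_{g\in S}\varphi_g$ lies in $\bigoplus_g \Delta_g(L)$. The sum is direct because evaluating a relation $\sum_g \varphi_g = 0$ on a homogeneous element and using directness of $L = \bigoplus_g L_g$ forces each $\varphi_g = 0$; combined with the obvious inclusion $\bigoplus_g \Delta_g(L) \subseteq \Delta(L)$, this gives the claim. I expect the determination-by-generators argument to be the main obstacle, since it is exactly the mechanism that converts finite generation into finiteness of the decomposition—without it, a $\frac12$-derivation vanishing on generators need not vanish and the sum could remain infinite—whereas the component matching is then routine bookkeeping.
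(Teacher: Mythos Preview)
The paper does not supply its own proof of this lemma: it is quoted verbatim from \cite{bib5} and used as a black box, so there is nothing in the paper to compare your argument against. That said, your proof is correct and is exactly the standard argument one finds in the literature for this kind of statement (including in \cite{bib5}): split $\varphi$ into its graded components $\varphi_g$, check by matching homogeneous components that each $\varphi_g$ is again a $\tfrac12$-derivation, and then use finite generation together with the propagation identity $\eta([u,v])=\tfrac12([\eta(u),v]+[u,\eta(v)])$ to conclude that a $\tfrac12$-derivation vanishing on generators vanishes everywhere, so only finitely many $\varphi_g$ survive.

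One small point worth making explicit: you assert that you may ``pick finite homogeneous generators $x_1,\dots,x_n$'', but the hypothesis only gives you finitely many (not necessarily homogeneous) generators. This is harmless---replacing each generator by the finite list of its homogeneous components still yields a finite generating set---but it deserves a one-line justification, since it is precisely this step that lets you form the finite set $S$ of degrees on which $\varphi$ acts nontrivially.
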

Similarly, for a Lie superalgebra $L=L_{\bar{0}}\oplus L_{\bar{1}}$,
$$\Delta(L)=\Delta_{\bar{0}}(L)\oplus \Delta_{\bar{1}}(L).$$
where $\Delta_{\bar{0}}(L)$ and $\Delta_{\bar{1}}(L)$ denote the subspaces that comprise, respectively, the even and odd $\delta$-superderivations of the Lie (super)algebra $L$. Thus, for each $\delta$-superderivation $\varphi\in \Delta(L)$, we can deduce that $\varphi =\varphi _{\bar{0}}+\varphi _{\bar{1}}$, where $\varphi _{\bar{0}}\in \Delta _{\bar{0}}\left(L\right)$ and $\varphi _{\bar{1}}\in \Delta _{\bar{1}}\left(L\right)$.\\

Definitions \ref{df2.1}(resp. \ref{df4.1}) and \ref{df2.3}(resp. \ref{df2.4}) immediately imply the following Lemma.
\begin{lemma}\label{lem2.1}
    Let $(L, \cdot, [\cdot, \cdot])$ be a transposed $\delta$-Poisson (super)algebra and $z\in L$.  Then the left multiplication $L_{z}$ of $(L, \cdot)$ is a $\delta$-(super)derivation of $(L, [\cdot, \cdot])$ and $|L_{z}|=|z|$.
\end{lemma}
Theorem 8 in \cite{bib5} can be reformulated as follows with the same proof.
\begin{lemma}\label{lem2.3}
    Let $L$ be a Lie (super)algebra with $\dim L >1$. If $L$ has no non-trivial $\delta$-(super)derivations, then every transposed $\delta$-Poisson (super)algebra structure defined on $L$ is trivial.
\end{lemma}

The above lemmas  illustrate the connection between $\delta$-(super)derivations on a Lie (super)algebra and
transposed $\delta$-Poisson (super)algebra structures that can be defined on it. Utilizing these lemmas, one can ascertain the  presence of nontrivial transposed $\delta$-Poisson structures within a Lie (super)algebra by examining its $\delta$-(super)derivations.

\begin{definition}\textsuperscript{\cite{bib16}}
A Lie (super)algebra $(L, [\cdot, \cdot]$) is said to be prime if $[A, B]\neq \{0\}$ for any nonzero ideals $A, B\subseteq L.$
\end{definition}
It is easy to see that a simple Lie (super)algebra is a prime Lie (super)algebra.

Filippov studied $\delta$-derivations of prime Lie algebras and gave the following result:
\begin{lemma}\textsuperscript{\cite{bib18}}\label{lem2.2}
 A prime Lie algebra does not have nonzero $\delta$-derivations if $\delta\neq -1, 0, \frac 12, 1$.
 \end{lemma}
 Zusmanovich generalized  this result to the super case:
\begin{lemma}\textsuperscript{\cite{bib15}}\label{lem2.5}
 A prime Lie superalgebra does not have nonzero $\delta$-superderivations
if $\delta\neq -1, 0, \frac 12, 1$. 
\end{lemma}

For the transposed $0$-Poisson (super)algebra structures on a simple Lie (super)algebra, we have the following result:
\begin{theorem}\label{thm2.1}
    If $(L, [\cdot, \cdot])$ is a simple Lie (super)algebra, then there are no non-trivial transposed $0$-Poisson (super)algebra structures on $L$. 
\end{theorem}
\begin{proof} If $\delta=0,$ then identity \eqref{eq4.1} and \eqref{eq2.2} become $z\cdot[x,y]=0, ~\forall~ x, y, z\in L.$
 
 We assume $(L, \cdot, [\cdot, \cdot])$ is a transposed $0$-Poisson (super)algebra. Since $L$ is simple, for all $a, b\in L,$ there exist $x, y\in L$ such that $b=[x, y]$.  According to identity \eqref{eq4.1}, we obtain $ a\cdot b=0.$ 

 This shows that all transposed $0$-Poisson (super)algebra structures on the Lie (super)algebra $(L, [\cdot, \cdot])$ are trivial.
\end{proof}




Throughout this paper, we denote by $\mathbb{C}$, $\mathbb{Z}$, $\mathbb{N}$ the sets of all complex numbers, all integers and all positive integers, respectively,
and $\mathbb{C}^{*}=\mathbb{C}\setminus{\left \{ 0\right \} }$, $\mathbb{Z}^{*}=\mathbb{Z}\setminus{\left \{ 0\right \} }$, $\mathbb{N}^{*}=\mathbb{N}\setminus{\left \{ 0\right \} }$. We note that $\textbf{e}_{1}=\left(1,0\right)$ and $\textbf{e}_{2}=\left(0,1\right)\in \mathbb{Z} ^{2}$. Consequently, $\mathbb{Z}^{2}=\mathbb{Z}\textbf{e}_{1}\oplus \mathbb{Z}\textbf{e}_{2}$. Unless otherwise specified, we use $\textbf{m} = \left(m_{1}, m_{2}\right)$ to denote an element in $\mathbb{Z}^{2}$.
\section{Transposed $\delta$-Poisson algebra structure on Virasoro-like algebra}
In this section, we delve into an exploration of the transposed $\delta$-Poisson structures on the  Virasoro-like algebra. 
It is easy to see that there is a natural $\mathbb Z^2$-grading on $V$: $V=\underset{{\bf i}\in \mathbb Z^2}\oplus V_{\bf i},$ where $V_{\bf i}=\left<L_{\bf i} \right>$ for ${\bf i}\in \mathbb Z^2\setminus\{{\bf 0}\}$ and $V_{\bf 0}={\bf 0}.$ In addition, $V$ can be generated by the finite set $\{L_{(1,0)}, L_{(-1, 0)}, L_{(0, 1)}, L_{(0, -1)}\}.$

Ivan Kaygorodov and Mykola Khrypchenko investigated the transposed Poisson algebra structures on the Virasoro-like algebra in \cite{bib7} and drew the following conclusion.
\begin{lemma}\textsuperscript{\cite{bib7}}\label{lem3.1}
    There are no non-trivial transposed Poisson algebra structures on the Virasoro-like algebra $V$.
\end{lemma}
It is well known  that  the Virasoro-like algebra is simple, according to Lemma \ref{lem2.2}, Theorem \ref{thm2.1} and Lemma \ref{lem3.1}, it suffices to consider the cases $\delta=-1$ and $\delta=1$.
\subsection{Transposed $-1$-Poisson algebra structure on Virasoro-like algebra}
For $\delta=-1,$ a $\delta$-derivation of $V$ is a linear map $D: V\rightarrow V$ satisfies: $$D([x, y])=-[D(x), y]-[x, D(y)], ~\forall x, y\in V.$$
Firstly, we determine all the $-1$-derivations on $V$.
\begin{theorem}\label{thm3.02}
  The Virasoro-like algebra $V$ dose not have nonzero $-1$-derivations.  
\end{theorem}
\begin{proof}
    Let $D$ be a $-1$-derivation of $V$. According to Lemma \ref{C}, we can write $D=\sum\limits_{{\bf i}\in\mathbb Z^2}D_{\bf i}$, where $D_{\bf i}$ is also a $-1$-derivation of $V$. We assume $D_{\bf i}(L_{\bf m})=a_{\bf m}L_{\bf m+i}$ for all ${\bf m}\in \mathbb Z^2\setminus\{{\bf 0}\}.$ By the definition of $-1$-derivation and identity \eqref{eq1.01}, we have 
    \begin{equation}\label{eq3.02}
      \det\binom{\bf n}{\bf m} a_{\bf m+n}=-\det\binom{\bf n}{\bf m+i}a_{\bf m}-\det\binom{\bf n+i}{\bf m} a_{\bf n}, ~\forall~{\bf m, n}\in \mathbb Z^2\setminus\{{\bf 0}\}. 
    \end{equation}
 In order to determine the coefficients $a_{\bf m},$ we need to consider the following cases:\\
 {\bf Case 1.} ${\bf i=0}.$

 Taking ${\bf n=e_1}$ in identity \eqref{eq3.02}, we obtain
 \begin{equation}\label{eq3.003}
     m_2(a_{\bf m+e_1}+a_{\bf m}+a_{\bf e_1})=0, ~\forall~{\bf m}\in\mathbb Z^2\setminus\{\bf 0\}.
 \end{equation}
Then for all ${\bf m}\in \mathbb Z\times \mathbb Z^*, $ 
\begin{equation*}
    a_{\bf m+e_1}+\frac 12 a_{\bf e_1}=-(a_{\bf m}+\frac 12 a_{\bf e_1}).
\end{equation*}
   Fix $m_2\in\mathbb Z^*,$ and treat $(a_{\bf m}+\frac 12 a_{\bf e_1})_{m_1\in\mathbb Z}$ as a geometric sequence. Then the above identity yields 
   \begin{equation}\label{eq3.03}
       a_{\bf m}=(-1)^{m_1-1}(a_{(1, m_2)}+\frac 12 a_{\bf e_1})-\frac 12 a_{\bf e_1}, ~\forall~{\bf m}\in\mathbb Z\times\mathbb Z^*.
   \end{equation}
   Similarly, set ${\bf n=e_2}$ in identity \eqref{eq3.02}. Then we get 
 \begin{equation}\label{eq3.005}
     -m_1(a_{\bf m+e_2}+a_{\bf m}+a_{\bf e_2})=0, ~\forall~{\bf m}\in \mathbb Z^2\setminus\{{\bf 0}\}.
 \end{equation}
 By a similar argument we obtain 
   \begin{equation}\label{eq3.04}
      a_{\bf m}= (-1)^{m_2}(a_{(m_1, 0)}+\frac{1}2a_{\bf e_2})-\frac{1}2a_{\bf e_2}, ~\forall~{\bf m}\in\mathbb Z^*\times\mathbb Z.
   \end{equation}
   Taking $m_1=1$ in identity \eqref{eq3.04}, then we have
   \begin{equation*}
       a_{(1, m_2)}=(-1)^{m_2}a_{\bf e_1}+\frac 12((-1)^{m_2}-1)a_{\bf e_2}, ~\forall~m_2\in\mathbb Z.
   \end{equation*}
   Substituting the above identity into \eqref{eq3.03}, we get for all ${\bf m}\in\mathbb Z\times\mathbb Z^*$, 
   \begin{equation}\label{eq3.6}
        a_{\bf m}=((-1)^{m_1+m_2-1}+\frac 12((-1)^{m_1-1}-1))a_{\bf e_1}+\frac 12(-1)^{m_1-1}((-1)^{m_2}-1)a_{\bf e_2}.
   \end{equation}
   Substituting identity \eqref{eq3.6} into identity \eqref{eq3.02}, then for all ${\bf m, n}\in \mathbb Z\times\mathbb Z^*$ such that ${\bf m+n}\in \mathbb Z\times\mathbb Z^*,$  we have 
   \begin{eqnarray*}
       &&\det\binom{\bf n}{\bf m}(((-1)^{m_1+n_1+m_2+n_2-1}+\frac 12((-1)^{m_1+n_1-1}-1))a_{\bf e_1}+\frac 12(-1)^{m_1+n_1-1}((-1)^{m_2+n_2}-1)a_{\bf e_2}\\
       &&+((-1)^{m_1+m_2-1}+\frac 12((-1)^{m_1-1}-1))a_{\bf e_1}+\frac 12(-1)^{m_1-1}((-1)^{m_2}-1)a_{\bf e_2}\\
       &&+((-1)^{n_1+n_2-1}+\frac 12((-1)^{n_1-1}-1))a_{\bf e_1}+\frac 12(-1)^{n_1-1}((-1)^{n_2}-1)a_{\bf e_2})=0
   \end{eqnarray*}
       Taking ${\bf m}=(1, 1), {\bf n}=(2, 1)$ and ${\bf m}=(2, 2), {\bf n}=(2, 1)$ in this identity respectively, then we obtain $a_{\bf e_2}=0$ and $a_{\bf e_1}=0.$ Substituting these results into identity \eqref{eq3.6}, we get 
       $$a_{\bf m}=0, ~\forall ~{\bf m}\in\mathbb Z\times \mathbb Z^*.$$
       Particularly, $a_{(m_1, -1)}=0, ~\forall~ m_1\in\mathbb Z.$ Then taking $m_2=-1$ in identity \eqref{eq3.005}, we deduce $$a_{(m_1, 0)}=0, ~\forall ~m_1\in\mathbb Z^*.$$
       Therefore, for all ${\bf m}\in \mathbb Z^2\setminus\{\bf 0\}$,  $a_{\bf m}=0.$\\ 
           {\bf Case 2.} ${\bf i}\in\mathbb Z^*\times\{0\}$ or $\{0\}\times\mathbb Z^*.$  

   Without loss of generality, we suppose ${\bf i}\in \{0\}\times \mathbb Z^*.$

Taking ${\bf n=e_1}$ in identity \eqref{eq3.02}, we obtain
   \begin{equation*}
       m_2a_{\bf m+e_1}=-(m_2+i_2)a_{\bf m}-(m_2-m_1i_2)a_{\bf e_1}, ~\forall ~{\bf m}\in\mathbb Z\times\mathbb Z^*.
   \end{equation*}
   Set $m_2=0$, then we have 
   \begin{equation}\label{eq3.05}
       a_{(m_1, 0)}=m_1a_{\bf e_1}, ~\forall~ m_1\in\mathbb Z^*.
   \end{equation}
   Taking ${\bf n=e_2}$ in identity \eqref{eq3.02}, we get
   $a_{\bf m+e_2}=-(a_{\bf m}+(i_2+1)a_{\bf e_2}), ~\forall~{\bf m}\in\mathbb Z^*\times\mathbb Z.$ Fixing $m_1\in\mathbb Z^*$ and treating $(a_{(m_1, m_2)}+\frac{i_2+1}2a_{\bf e_2})_{m_2\in\mathbb Z}$ as a geometric sequence, then we have
\begin{equation*}
      a_{\bf m}= (-1)^{m_2}(a_{(m_1, 0)}+\frac{i_2+1}2a_{\bf e_2})-\frac{i_2+1}2a_{\bf e_2}, ~\forall~{\bf m}\in\mathbb Z^*\times\mathbb Z.
   \end{equation*}
   Substituting identity \eqref{eq3.05}, we obtain 
   \begin{equation}\label{eq3.09}
       a_{\bf m}= (-1)^{m_2}m_1a_{\bf e_1}+\frac{i_2+1}2((-1)^{m_2}-1)a_{\bf e_2}, ~\forall~{\bf m}\in\mathbb Z^*\times\mathbb Z.
   \end{equation}
    Substituting identity \eqref{eq3.09} into identity \eqref{eq3.02}, then for all ${\bf m, n}\in \mathbb Z^*\times\mathbb Z$ such that ${\bf m+n}\in \mathbb Z^*\times\mathbb Z,$  we have 
\begin{eqnarray*}
    &&\det\binom{\bf n}{\bf m}((-1)^{m_2+n_2}(m_1+n_1)a_{\bf e_1}+\frac{i_2+1}2((-1)^{m_2+n_2}-1)a_{\bf e_2})\\
    &=&-\det\binom{\bf n}{\bf m+i}((-1)^{m_2}m_1a_{\bf e_1}+\frac{i_2+1}2((-1)^{m_2}-1)a_{\bf e_2})\\
    &&-\det\binom{\bf n+i}{\bf m}((-1)^{n_2}n_1a_{\bf e_1}+\frac{i_2+1}2((-1)^{n_2}-1)a_{\bf e_2})
\end{eqnarray*}
    Taking ${\bf m}=(1, 1), {\bf n}=(2, 2)$ and ${\bf m}=(1, 1), {\bf n}=(3, 3)$ in identity \eqref{eq3.02} respectively, we have
   $$ \begin{cases}
    i_2(2a_{\bf e_1}+(i_2+1)a_{\bf e_2})=0 \\
   i_2(i_2+1)a_{\bf e_2}=0
\end{cases}
 $$
 \begin{itemize}
     \item If $i_2\neq -1,$ then we get immediately  $a_{\bf e_1}=a_{\bf e_2}=0.$ And from identity \eqref{eq3.09}, we obtain $a_{\bf m}=0, ~\forall~{\bf m}\in \mathbb Z^*\times\mathbb Z.$
     \item If $i_2=-1$, then we get $a_{\bf e_1}=0$ from the system of linear equations. And from identity \eqref{eq3.09}, we obtain  $a_{\bf m}=0, ~\forall~{\bf m}\in \mathbb Z^*\times\mathbb Z.$ 
 \end{itemize}
 By a similar argument as in Case 1, we can prove $a_{\bf m}=0, ~\forall~ {\bf m}\in\mathbb Z^2\setminus\{\bf 0\}$.\\
   {\bf Case 3.} ${\bf i}\in \mathbb Z^*\times\mathbb Z^*.$

   Taking ${\bf n=e_1}$ in identity \eqref{eq3.02}, we obtain
   \begin{equation*}
      m_2a_{\bf m+e_1}=-(m_2+i_2)a_{\bf m}-(m_2(1+i_1)-m_1i_2)a_{\bf e_1}. 
   \end{equation*}
   Setting $m_2=0$, then we get \begin{equation}\label{eq3.010}
       a_{(m_1, 0)}=m_1a_{\bf e_1}, ~\forall~ m_1\in\mathbb Z^*.
   \end{equation}
   Taking ${\bf n=e_2}$ in identity \eqref{eq3.02}, we obtain
   \begin{equation*}
       -m_1a_{\bf m+e_2}=(m_1+i_1)a_{\bf m}-(m_2i_1-m_1(i_2+1))a_{\bf e_2}.
   \end{equation*}
   Setting $m_1=0$, we get 
   \begin{equation}\label{eq3.011}
       a_{(0, m_2)}=m_2a_{\bf e_2}, ~\forall ~m_2\in\mathbb Z^*.
   \end{equation}
   Taking ${\bf m}=(m_1, 0), {\bf n}=(0, m_2)$ in identity \eqref{eq3.02} and substituting \eqref{eq3.010} and \eqref{eq3.011}, we deduce 
 \begin{equation}\label{eq3.012}
     a_{(m_1, m_2)}=-(m_1+i_1)a_{\bf e_1}-(m_2+i_2)a_{\bf e_2}, ~\forall~(m_1, m_2)\in \mathbb Z^*\times\mathbb Z^*.
 \end{equation}
 Substituting the above identity into identity \eqref{eq3.02}, the for all ${\bf m, n}\in \mathbb Z^*\times \mathbb Z^*$ such that ${\bf m+n}\in \mathbb Z^*\times \mathbb Z^*$, we have 
 \begin{eqnarray*}
     && \det\binom{\bf n}{\bf m}(-(m_1+n_1+i_1)a_{\bf e_1}-(m_2+n_2+i_2)a_{\bf e_2})\\
     &=&\det\binom{\bf n}{\bf m+i}((m_1+i_1)a_{\bf e_1}+(m_2+i_2)a_{\bf e_2})+\det\binom{\bf n+i}{\bf m}((n_1+i_1)a_{\bf e_1}+(n_2+i_2)a_{\bf e_2})
 \end{eqnarray*}
  Setting ${\bf m}=(-2i_1, i_2), {\bf n}=(-i_1, i_2)$ and ${\bf m}=(i_1, -i_2), {\bf n}=(-i_1, i_2)$ in the above identity respectively, we obtain
$$ \begin{cases}
    -i_1a_{\bf e_1}+9i_2a_{\bf e_2}=0 \\
   i_1a_{\bf e_1}+2i_2a_{\bf e_2}=0
\end{cases}
 $$    
 Then we get immediately that $a_{\bf e_1}=a_{\bf e_2}=0.$  Substituting this result into identities \eqref{eq3.010}-\eqref{eq3.012}, we have 
 $$a_{\bf m}=0, ~\forall~ {\bf m}\in\mathbb Z^2\setminus\{0\}.$$
 Combining all the three cases above, we deduce that $D=0.$
\end{proof}
\begin{theorem}
    There are no nontrivial transposed $-1$-Poisson algebra structures on the Virasoro-like algebra $(V, [\cdot, \cdot])$.
\end{theorem}
\subsection{Transposed $1$-Poisson algebra structure on Virasoro-like algebra}
For $\delta=1$, a $\delta$-derivation of $V$ is exactly a derivation of $V$. Jiang C. and Meng D. determined all the derivations of $V$ in \cite{bib19}. 
\begin{lemma}\label{lem3.2}
    ${\rm Der}(V)={\rm ad}(V)+\mathbb C d_1+\mathbb C d_2,$ where $d_1$ and $d_2$ are defined as follow:
$$      \forall ~ {\bf m}\in \mathbb Z^2\setminus\{{\bf 0}\}, ~d_1(L_{\bf m})=m_1L_{\bf m}, ~ d_2(L_{\bf m})=m_2L_{\bf m}.
$$
\end{lemma}
\begin{theorem}
  There are no non-trivial transposed $1$-Poisson algebra structures on the Virasoro-like algebra $(V, [\cdot, \cdot])$.
\end{theorem}
\begin{proof}
    Let $(V, \cdot, [\cdot, \cdot])$ be a transposed $1$-Poisson algebra.  According to Lemma \ref{lem2.2}, for all $x\in V$, $L_x$ is a derivation of $(V, [\cdot, \cdot])$. By Lemma \ref{lem3.2}, we know that for all
  ${\bf m, n}\in \mathbb Z^2\setminus\{{\bf 0}\},$ 
  \begin{eqnarray*}
       L_{\bf m}\cdot L_{\bf n}&=&\sum\limits_{{\bf i}\in \mathbb Z^2\setminus\{{\bf 0}\}}a_{\bf i}[L_{\bf i}, L_{\bf n}]+(c_{L_{\bf m}}n_1+d_{L_{\bf m}}n_2)L_{\bf n}\\
       &=&\sum\limits_{{\bf i}\in \mathbb Z^2\setminus\{{\bf 0}\}}a_{\bf i} \det\binom{\bf n}{\bf i}L_{\bf n+i}+(c_{L_{\bf m}}n_1+d_{L_{\bf m}}n_2)L_{\bf n}.
  \end{eqnarray*}
  \begin{eqnarray*}
  L_{\bf n}\cdot L_{\bf m}&=&\sum\limits_{{\bf i}\in \mathbb Z^2\setminus\{{\bf 0}\}}b_{\bf i}[L_{\bf i}, L_{\bf m}]+(c_{L_{\bf n}}m_1+d _{L_{\bf n}}m_2)L_{\bf m}\\
  &=&\sum\limits_{{\bf i}\in \mathbb Z^2\setminus\{{\bf 0}\}}b_{\bf i}\det\binom{\bf m}{\bf i}L_{\bf m+i}+(c_{L_{\bf n}}m_1+d _{L_{\bf n}}m_2)L_{\bf m}.
  \end{eqnarray*}
  Since $\cdot$ is commutative, then 
  $L_{\bf m}\cdot L_{\bf n}-L_{\bf n}\cdot L_{\bf m}=0.$ We know that for all ${\bf i}\in\mathbb Z^2\setminus\{{\bf 0}\},$ if ${\bf m\neq n},$ then ${\bf m+i}\neq {\bf n+i}.$ So for all ${\bf i}\in\mathbb Z^2\setminus\{{\bf 0}\},$ $a_{\bf i}\det\binom{\bf n}{\bf i}=b_{\bf i}\det\binom{\bf m}{\bf i}=0$ and $c_{L_{\bf m}}n_1+d_{L_{\bf m}}n_2=c_{L_{\bf n}}m_1+d _{L_{\bf n}}m_2=0$
 because of $(L_{\bf m})_{{\bf m}\in\mathbb Z^2\setminus\{{\bf 0}\}}$ is linear independent.
  Thus, $\forall~ {\bf m, n}\in \mathbb Z^2\setminus\{{\bf 0}\}, L_{\bf m}\cdot L_{\bf n}=0,$ and $(V, \cdot, [\cdot, \cdot])$ is a trivial transposed $1$-Poisson algebra.   
\end{proof}
Based on the discussion in this section, we arrive at the following conclusion.
\begin{theorem}
  The Virasoro-like algebra $V$ admits no nontrivial transposed $\delta$-Poisson algebra structures.  
\end{theorem}
\section{Transposed $\delta$-Poisson superalgebra structure on the Kantor Lie-double of Virasoro-like algebra}\label{sec3}

In this section, we delve into an exploration of the transposed $\delta$-Poisson structures on the Kantor Lie-double of the Virasoro-like algebra, which is denoted by $L(V)$, as introduced in Section \ref{sec1}.

\begin{proposition}
The Lie superalgebra $L(V)$ is simple.
\end{proposition}
\begin{proof}
Let $I$ be a nonzero ideal of $L(V)$, and $x=\sum\limits_{i=1}^ka_iL_{(m_i, n_i)}+\sum\limits_{j=1}^{\ell} b_jG_{(s_j, t_j)}$ be the element in $I\setminus \{0\}$ such that $k+\ell$ is minimal.

{\bf Case 1.} $\ell\geq 1.$ Then 
$$[x, G_{(-s_1, -t_1)}]=\sum\limits_{i=1}^{k} a_i(-s_1n_i+t_1m_i)G_{(m_i-s_1, n_i-t_1)}+\sum\limits_{j=2}^{\ell}b_jL_{(s_j-s_1, t_j-t_1)}\in I.$$
By the minimality of $k+\ell$ we obtain $\ell=1$ and for all $i\in \{1, 2, \cdots, k\}, -s_1n_i+t_1m_i=0.$ Thus
$$[x, G_{(s_1, t_1)}]=\sum\limits_{i=1}^{k} a_i(s_1n_i-t_1m_i)G_{(m_i+s_1, n_i+t_1)}+b_1L_{(2s_1, 2t_1)}\in I.$$
By the minimality of $k+\ell$ again, we know that $k=0$ and $x=b_1G_{(s_1, t_1)}.$

Therefore, there exists $(s,t)\in\mathbb Z^2\setminus\{{\bf 0}\}$ such that $G_{(s, t)}\in I$. Then, for all $y\in L(V)_{\overline 0},$  there exists $z\in L(V)_{\overline 1}$ such that  $y=[G_{(s, t)}, z].$ Since $G_{(s, t)}\in I$, then $y\in I.$ This shows that $L(V)_{\overline 0}\subseteq I.$ Now we consider the following two subcases for proving $L(V)_{\overline 1}\subseteq I.$

{\bf Subcase 1.} $s=0.$ Then $t\neq 0$, and for all $(m, n)\in\mathbb Z^*\times\mathbb Z,$ $$G_{(m,n)}=-\frac{1}{mt}[L_{(m, n-t)}, G_{(0, t)}]\in I.$$
In addition, for all $n\in\mathbb Z^*$, there exists $(m, p)\in \mathbb Z^*\times \mathbb Z$, such that $$G_{(0, n)}=\frac 1{mn}[L_{(-m, p)}, G_{(m, n-p)}].$$ Since $G_{(m, n-p)}\in I,$ then we know $G_{(0, n)}\in I.$

{\bf Subcase 2.} $s\neq 0.$ Then $G_{(0,1)}=\frac 1s[L_{(-s, 1-t)}, G_{(s, t)}]\in I.$  By {\bf Subcase 1}, we know $L(V)_{\overline 1}\subseteq I.$

Therefore, $I=L(V).$\\
{\bf Case 2.} $\ell=0.$ Then $x=\sum\limits_{i=1}^ka_iL_{(m_i, n_i)}.$ Since $L(V)_{\overline 0}$ is the Virasoro-like algebra, and it is simple, then it is easy to see $L(V)_{\overline 0}\subseteq I.$ In addition,
 $$\forall ~(m, n)\in\mathbb Z^*\times \mathbb Z,~ G_{(m, n)}=[L_{(0,1)}, G_{(m ,n-1)}]\in I,$$
 $$\forall~ (m, n)\in\mathbb Z\times \mathbb Z^*,~ G_{(m, n)}=[L_{(1, 0)}, G_{(m-1, n)}]\in I.$$ 
 Thus, for all $(m ,n)\in \mathbb Z^2\setminus\{{\bf 0}\}, G_{(m ,n)}\in I.$ 

 Therefore, $L(V)_{\overline 1}\subseteq I$ and $I=L(V).$ 
 
 All in all, $L(V)$ is simple.
\end{proof}

According to Lemma \ref{lem2.5} and Theorem \ref{thm2.1}, it suffices to consider the cases: $\delta=-1,$ $\frac 12, 1$.\\


 The following Lemma is trivial: 
\begin{lemma}
 The Lie superalgebra $L(V)$ can be generated by the finite set $\left \{ L _{(0,\pm1)},L _{(\pm 1,0)},G _{(0,\pm1)},G _{(\pm 1,0)} \right \}$. 
\end{lemma}

Now we know that $L(V)$ is a finitely generated infinite-dimensional complex Lie superalgebra with $L(V)_{\overline 0}=\left<L_{\bf m}\mid {\bf m}\in \mathbb Z^2\setminus\{\bf 0\}\right>$ and $L(V)_{\overline 1}=\left<G_{\bf m}\mid {\bf m}\in\mathbb Z^2\setminus\{0\}\right>$. In addition, there is a natural $\mathbb{Z}^{2}$-grading on $L(V)$: $L(V)=\underset{{\bf i}\in\mathbb Z^2}\oplus L(V)_{\bf i}$,  where $L(V)_{\bf i}=\left<L_{\bf i}, G_{\bf i}\right>$ for ${\bf i}\in \mathbb Z^2\setminus\{\bf 0\}$ and $L(V)_{\bf 0}=0.$ By Lemma \ref{C}, we get 
\begin{equation}\label{eq3.1}
\Delta(L(V))=\underset{\textbf{i}\in \mathbb{Z}^{2}}{\oplus} \Delta_{\textbf{i}}(L(V)).
 \end{equation}
 To furnish a comprehensive characterization of the transposed $\delta$-Poisson superalgebra structures on $L(V)$, it suffices to calculate the odd and even $\delta$-superderivations on $L(V)$. Subsequently, leveraging Lemma \ref{lem2.3}, we can ascertain the complete set of transposed $\delta$-Poisson superalgebra structures on $L(V)$.
\subsection{Transposed Poisson superalgebra structure on $L(V)$}

Firstly,  we determine the odd $\frac 12$-superderivations of $L(V)$, that are the linear maps $\varphi : L(V)\to L(V)$, which are characterized by their property of satisfying
$$\varphi\left( L(V)_{\bar{0}}\right)\subseteq L(V)_{\bar{1}},~~~\varphi\left( L(V)_{\bar{1}}\right)\subseteq L(V)_{\bar{0}}.$$
In this case, $\left | \varphi \right |=1$, and $\varphi$ is a $\frac{1}{2}$-superderivation of $L(V)$ if and only if
$$\varphi\left ( \left[x,y\right] \right ) =\frac{1}{2} \left(\left[\varphi\left ( x\right ),y\right]+\left[x,\varphi\left ( y\right )\right]\right),~\forall~x\in L(V)_{\bar{0}},$$
$$\varphi\left ( \left[x,y\right] \right ) =\frac{1}{2} \left(\left[\varphi\left ( x\right ),y\right]-\left[x,\varphi\left ( y\right )\right]\right),~\forall~x\in L(V)_{\bar{1}}.$$

\begin{theorem}\label{A}
$\Delta _{\overline 1} \left ( L(V) \right ) =\{0\}$.
\end{theorem}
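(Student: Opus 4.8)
The plan is to exploit the $\mathbb{Z}^2$-grading on $L(V)$ together with the decomposition $\Delta(L(V))=\oplus_{\mathbf{i}\in\mathbb{Z}^2}\Delta_{\mathbf{i}}(L(V))$ from \eqref{eq3.1}, so that it suffices to show each homogeneous odd $\frac{1}{2}$-superderivation of degree $\mathbf{i}$ vanishes. Thus I would fix $\varphi\in\Delta_{\mathbf{i},\overline{1}}(L(V))$ of degree $\mathbf{i}=(i_1,i_2)$ and write, for the basis elements, $\varphi(L_{\mathbf{m}})=a_{\mathbf{m}}G_{\mathbf{m}+\mathbf{i}}$ and $\varphi(G_{\mathbf{m}})=b_{\mathbf{m}}L_{\mathbf{m}+\mathbf{i}}$ for scalars $a_{\mathbf{m}},b_{\mathbf{m}}\in\mathbb{C}$ (with the convention that a coefficient is zero whenever the target index is $\mathbf{0}$, since $L_{\mathbf{0}}=G_{\mathbf{0}}=0$). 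The goal is then to show all $a_{\mathbf{m}}$ and $b_{\mathbf{m}}$ vanish.

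Next I would extract recurrence relations on the coefficients by feeding the three bracket relations \eqref{eq1.1}--\eqref{eq1.3} into the two defining identities for an odd $\frac{1}{2}$-superderivation. Applying $\varphi$ to $[L_{\mathbf{m}},L_{\mathbf{n}}]=\det\binom{\mathbf{n}}{\mathbf{m}}L_{\mathbf{m}+\mathbf{n}}$ and using $|L_{\mathbf{m}}|=\overline{0}$ gives, after comparing coefficients of $G_{\mathbf{m}+\mathbf{n}+\mathbf{i}}$, a relation of the form $2\det\binom{\mathbf{n}}{\mathbf{m}}\,a_{\mathbf{m}+\mathbf{n}}=\det\binom{\mathbf{n}}{\mathbf{m}+\mathbf{i}}\,a_{\mathbf{m}}+\det\binom{\mathbf{n}+\mathbf{i}}{\mathbf{m}}\,a_{\mathbf{n}}$. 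Similarly, applying $\varphi$ to the mixed relation \eqref{eq1.2} (with $x=L_{\mathbf{m}}$ even) and to \eqref{eq1.3} (with $x=G_{\mathbf{m}}$ odd, so the relative sign is $-$) yields analogous linear relations coupling the $a$'s and $b$'s. The plan is to specialize these recurrences to the generators $\mathbf{m},\mathbf{n}\in\{(\pm1,0),(0,\pm1)\}$ appearing in the previous lemma, and propagate outward using the finite generation, pinning down the coefficients step by step.

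The decisive simplification, which I expect to be the heart of the argument, comes from the relation \eqref{eq1.3}, namely $[G_{\mathbf{m}},G_{\mathbf{n}}]=L_{\mathbf{m}+\mathbf{n}}$ with \emph{constant} structure coefficient. Applying the odd-case identity to this bracket gives $\varphi(L_{\mathbf{m}+\mathbf{n}})=\tfrac12([\varphi(G_{\mathbf{m}}),G_{\mathbf{n}}]-[G_{\mathbf{m}},\varphi(G_{\mathbf{n}})])$, which reads $a_{\mathbf{m}+\mathbf{n}}G_{\mathbf{m}+\mathbf{n}+\mathbf{i}}=\tfrac12\big(b_{\mathbf{m}}\det\binom{\mathbf{n}}{\mathbf{m}+\mathbf{i}}-b_{\mathbf{n}}\det\binom{\mathbf{n}+\mathbf{i}}{\mathbf{m}}\big)G_{\mathbf{m}+\mathbf{n}+\mathbf{i}}$. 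Because the left side has no determinantal weight but the right side does, varying $\mathbf{m},\mathbf{n}$ over all pairs with a fixed sum forces strong constraints that are easiest to kill the coefficients with. Concretely, I would isolate a single target index $\mathbf{k}=\mathbf{m}+\mathbf{n}$ and let the pair $(\mathbf{m},\mathbf{n})$ range over the many decompositions of $\mathbf{k}$: since $a_{\mathbf{k}}$ is independent of the decomposition while the right-hand combination of $b$'s carries explicit dependence on $\mathbf{m},\mathbf{n}$, consistency across decompositions will force $b\equiv0$ and hence $a\equiv0$.

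The main obstacle will be the bookkeeping around the degenerate cases where a determinant $\det\binom{\cdot}{\cdot}$ vanishes or a target index lands on $\mathbf{0}$, since these are precisely the configurations where a recurrence carries no information and one could be tricked into thinking a coefficient is free. To handle this cleanly I would argue in two stages: first use the nondegenerate instances of the \eqref{eq1.3}-recurrence to express generic coefficients in terms of a few ``seed'' values, and then use the mixed relations \eqref{eq1.2} together with the generator relations from the preceding lemma to tie down those seeds and rule out the degenerate loopholes. Throughout, the two special degrees $\mathbf{i}=\mathbf{0}$ and $\mathbf{i}\neq\mathbf{0}$ should be separated, as the $\mathbf{i}=\mathbf{0}$ component is the one most susceptible to a hidden nonzero scalar map and therefore deserves an independent check before concluding $\Delta_{\overline{1}}(L(V))=\{0\}$.
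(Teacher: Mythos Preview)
Your setup is exactly right and matches the paper: decompose $\varphi=\sum_{\mathbf i}\varphi_{\mathbf i}$ via \eqref{eq3.1}, write $\varphi_{\mathbf i}(L_{\mathbf m})=a_{\mathbf m}G_{\mathbf m+\mathbf i}$, $\varphi_{\mathbf i}(G_{\mathbf m})=b_{\mathbf m}L_{\mathbf m+\mathbf i}$, and feed \eqref{eq1.1}--\eqref{eq1.3} into the $\frac12$-superderivation identities to obtain the three coefficient relations you wrote down (these are the paper's \eqref{eq:0.4}--\eqref{eq:0.6}).

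Where your plan diverges from the paper, and where it develops a genuine gap, is the claim that the relation coming from \eqref{eq1.3} is the ``decisive simplification'' and that varying decompositions $\mathbf k=\mathbf m+\mathbf n$ in
\[
2a_{\mathbf m+\mathbf n}=\det\tbinom{\mathbf n}{\mathbf m+\mathbf i}\,b_{\mathbf m}-\det\tbinom{\mathbf n+\mathbf i}{\mathbf m}\,b_{\mathbf n}
\]
will by itself force $b\equiv 0$. This is false: take $b_{\mathbf m}\equiv c$ constant. A short computation gives
\[
\det\tbinom{\mathbf n}{\mathbf m+\mathbf i}-\det\tbinom{\mathbf n+\mathbf i}{\mathbf m}=i_2(m_1+n_1)-i_1(m_2+n_2)=\det\tbinom{\mathbf m+\mathbf n}{\mathbf i},
\]
so the right-hand side equals $c\det\tbinom{\mathbf k}{\mathbf i}$, which depends only on $\mathbf k$. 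Hence every constant sequence $(b_{\mathbf m})$ (with $a_{\mathbf k}=\tfrac{c}{2}\det\tbinom{\mathbf k}{\mathbf i}$, in particular $a\equiv 0$ when $\mathbf i=\mathbf 0$) survives your consistency test for \emph{every} $\mathbf i$. So the step you single out as the heart of the argument does not do the work you assign to it; relation \eqref{eq:0.5} (from \eqref{eq1.2}) is what actually kills the constant family, and relation \eqref{eq:0.4} (from \eqref{eq1.1}) is needed to control the $a$'s independently.

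The paper's route is organized differently and is worth comparing. It uses \eqref{eq:0.4}---the $[L,L]$ relation, which you derive but never deploy---as the \emph{primary} tool: specializing $\mathbf n=\mathbf e_1,\mathbf e_2$ produces first-order linear recurrences in $m_1$ or $m_2$ that are solved as geometric sequences, and substituting back into \eqref{eq:0.4} at a few test points forces $a_{\mathbf e_1}=a_{\mathbf e_2}$ (case $\mathbf i=\mathbf 0$) or $a_{\mathbf e_1}=a_{\mathbf e_2}=0$ (cases $\mathbf i\neq\mathbf 0$). Only \emph{after} the $a$'s are nailed down does the paper turn to \eqref{eq:0.5}--\eqref{eq:0.6} to finish off the $b$'s. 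The case analysis is also three-way ($\mathbf i=\mathbf 0$; exactly one of $i_1,i_2$ zero; both nonzero) rather than your two-way split, and the arguments in the three cases are genuinely different---in particular the ``one coordinate zero'' case needs its own geometric-sequence treatment. Your outline would need to reinstate \eqref{eq:0.4} as a working relation and refine the case split to go through.
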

\begin{proof} Let $\varphi$ be an odd $\frac 12$-superderivation of $L(V)$, then by identity \eqref{eq3.1}, we can write $\varphi=\sum\limits_{{\bf i}\in\mathbb Z^2}\varphi_{\bf i}$, where $\varphi_{\bf i}$ is also an odd $\frac 12$-superderivation of $L(V)$ for all ${\bf i}\in \mathbb Z^2.$ 
Let $\textbf{i}\in \mathbb{Z}^{2}$, due $\varphi_{\bf i}\left( L(V)_{\bar{0}}\right)\subseteq L(V)_{\bar{1}}$, $\varphi_{\bf i}\left( L(V)_{\bar{1}}\right)\subseteq L(V)_{\bar{0}}$, 
we suppose 
\begin{equation*}\varphi _{\textbf{i}}\left(L_{\textbf{m}}\right)=a_{\textbf{m}} G_{\textbf{m}+\textbf{i}},\end{equation*}
\begin{equation*}\varphi _{\textbf{i}}\left(G_{\textbf{m}}\right)=b_{\textbf{m}} L_{\textbf{m}+\textbf{i}},\end{equation*}
for all $\textbf{m}\in\mathbb{Z}^{2}\setminus\left\{\bf 0\right\}$. Applying $\varphi _{\textbf{i}}$ to identities \eqref{eq1.1}-\eqref{eq1.3}, we obtain $\forall~ {\bf m, n}\in \mathbb Z^2\setminus\{\bf 0\},$
\begin{equation}\label{eq:0.4}
2\mathrm{det}\binom{\textbf{n}}{\textbf{m}} a_{\textbf{m}+\textbf{n}}=
\mathrm{det}\binom{\textbf{n}}{\textbf{m}+\textbf{i}}a_{\textbf{m}}+\mathrm{det}\binom{\textbf{n}+\textbf{i}}{\textbf{m}} a_{\textbf{n}},
\end{equation}
\begin{equation}\label{eq:0.5}
2\mathrm{det}\binom{\textbf{n}}{\textbf{m}} b_{\textbf{m}+\textbf{n}}=
a_{\textbf{m}}+\mathrm{det}\binom{\textbf{n}+\textbf{i}}{\textbf{m}} b_{\textbf{n}},\end{equation}
\begin{equation}\label{eq:0.6}
2a_{\textbf{m}+\textbf{n}}=
\mathrm{det}\binom{\textbf{n}}{\textbf{m}+\textbf{i}}b_{\textbf{m}}-\mathrm{det}\binom{\textbf{n}+\textbf{i}}{\textbf{m}} b_{\textbf{n}}.\end{equation}
 To determine the coefficients, we need to consider the following cases:

\textbf{Case 1.} $\textbf{i}={\bf 0}$.

 We know that for all ${\bf m}\in \mathbb Z^2\setminus\{{\bf 0}\},$ there exists an $n\in\mathbb Z^2\setminus\{{\bf 0}\},$ such that $\det\binom{\bf n}{\bf m}=0.$
 Thus by \eqref{eq:0.5}, we obtain $\forall~ {\bf m}\in \mathbb Z^2\setminus\{0\},~ a_{\bf m}=0. $  

Then the identities \eqref{eq:0.5} and \eqref{eq:0.6} become
\begin{equation}\mathrm{det}\binom{\textbf{n}}{\textbf{m}} \left (2b_{\textbf{m}+\textbf{n}}-b_{\textbf{n}}\right)=0,\label{eq:0.7}\end{equation}
\begin{equation}\mathrm{det}\binom{\textbf{n}}{\textbf{m}}\left (b_{\textbf{m}}-b_{\textbf{n}}\right)=0.\label{eq:0.8}\end{equation}
By taking $\textbf{m}=\textbf{e}_{2} $, $\textbf{n}=\textbf{e}_{1}$ in identity \eqref{eq:0.7}, and taking $\textbf{m}=\textbf{e}_{2}$, $\textbf{n}=\textbf{e}_{1}$; $\textbf{m}=\left(1,1\right) $, $\textbf{n}=\textbf{e}_{1}$ 
in identity \eqref{eq:0.8}, respectively, we have
$$\left\{\begin{array}{rcl}
2b_{(1,1)}-b_{\textbf{e}_{1}}&=&0\\
b_{\textbf{e}_{2}}-b_{\textbf{e}_{1}}&=&0\\
b_{(1,1)}-b_{\textbf{e}_{1}}&=&0
\end{array}\right.$$
Solving the above system of linear equations yields $b_{(1,1)}=b_{\textbf{e}_{1}}=b_{\textbf{e}_{2}}=0.$



By taking $\textbf{n}=\textbf{e}_{1}$ in identity \eqref{eq:0.8}, we get
$$m_{2}\left (b_{\textbf{m}}-b_{\textbf{e}_{1}}\right)=0,~\forall~ \textbf{m}\in\mathbb{Z}^{2}\setminus\left\{\bf 0\right\}.$$
Then
$$b_{\textbf{m}}=b_{\textbf{e}_{1}}=0,~\forall~\textbf{m}\in\mathbb{Z}\times\mathbb{Z}^{*}. $$
By taking $\textbf{n}=\textbf{e}_{2}$ in identity \eqref{eq:0.8}, we get
$$m_{1}\left (b_{\bf m}-b_{\textbf{e}_{2}}\right)=0,~\forall~{\bf m}\in\mathbb{Z}^2\setminus\{{\bf 0}\}.$$
Then
$$b_{\bf m}=b_{\textbf{e}_{2}}=0,~\forall~{\bf m}\in\mathbb{Z}^{*}\times \mathbb Z.$$
Thus we proved
$$b_{\textbf{m}}=0,~\forall~\textbf{m}\in\mathbb{Z}^{2}\setminus\left\{\bf 0\right\}.$$

\textbf{Case 2.} $\textbf{i}=(i_{1},i_{2})\in \left\{0\right\} \times \mathbb{Z}^{*}$ or $\mathbb{Z}^{*}\times\left\{0\right\} $. 

Without loss of generality, we suppose $\textbf{i}\in \left\{0\right\} \times \mathbb{Z}^{*}$. 

By identity \eqref{eq:0.4}, we have $\forall~ \textbf{m},\textbf{n}\in\mathbb{Z}^{2}\setminus\left\{\bf 0\right\},$
\begin{equation}\label{eq:9.20}
2\mathrm{det}\binom{\textbf{n}}{\textbf{m}}a_{\textbf{m}+\textbf{n}}=\left(\left(m_{2}+i_{2}\right)n_{1}-m_{1}n_{2}\right)a_{\textbf{m}}+
\left(m_{2}n_{1}-m_{1}\left(n_{2}+i_{2}\right)\right)a_{\textbf{n}}.\end{equation}
Taking $\textbf{n}= \textbf{e}_{1}$ and $\textbf{e}_{2}$ in identity \eqref{eq:9.20}, we get
\begin{equation}
2m_{2}a_{\textbf{m}+\textbf{e}_{1}}=\left(m_{2}+i_{2}\right)a_{\textbf{m}}+\left(m_{2}-m_{1}i_{2}\right)a_{\textbf{e}_{1}},
~\forall~\textbf{m}\in\mathbb{Z}^{2}\setminus\left\{\bf 0\right\},\label{eq:9.21}\end{equation}
\begin{equation}2m_{1}a_{\textbf{m}+\textbf{e}_{2}}=m_{1}a_{\textbf{m}}+m_{1}\left(1+i_{2}\right)a_{\textbf{e}_{2}},~\forall~  \textbf{m}\in\mathbb{Z}^{2}\setminus\left\{\bf 0\right\}.\label{eq:9.22}\end{equation}
Taking $m_{2}= 0$ in identity \eqref{eq:9.21}, we obtain
$$i_{2}\left(a_{(m_{1},0)}-m_{1}a_{\textbf{e}_{1}}\right)=0,~\forall  ~m_{1}\in\mathbb{Z}^{*}.$$
Since $i_{2}\not=0$, then
\begin{equation}a_{(m_{1},0)}=m_{1}a_{\textbf{e}_{1}},~~\forall  ~m_{1}\in\mathbb{Z}^{*}.\label{eq:9.24}\end{equation}
By identity \eqref{eq:9.22}, we have
\begin{equation}2a_{(m_{1},m_{2}+1)}=a_{\textbf{m}}+\left(1+i_{2}\right)a_{\textbf{e}_{2}},~\forall~\textbf{m}\in\mathbb{Z}^{*}\times\mathbb{Z}.\label{eq:9.23}\end{equation}
Fix $m_1\in\mathbb Z^*,$ and treat $\left(a_{\textbf{m}}-\left(1+i_{2}\right)a_{\textbf{e}_{2}}\right)_{m_{2}\in\mathbb{Z}}$ as a geometric sequence, then by identity \eqref{eq:9.23}, we have
\begin{equation*}a_{\textbf{m}}=\left( a_{(m_{1},0)}-\left(1+i_{2}\right)a_{\textbf{e}_{2}}\right)\left(\frac{1}{2}\right)^{m_{2}}+\left(1+i_{2}\right)a_{\textbf{e}_{2}},
~\forall ~\textbf{m}\in\mathbb{Z}^{*}\times\mathbb{Z}.\label{eq:9.25} \end{equation*}
By substituting identity \eqref{eq:9.24} into the identity  above, we get
\begin{equation}a_{\textbf{m}}=m_{1}\left(\frac{1}{2}\right)^{m_{2}}a_{\textbf{e}_{1}}+\left(1+i_{2}\right)\left(1-\left(\frac{1}{2}\right)^{m_{2}}\right)a_{\textbf{e}_{2}},
~\forall~  \textbf{m}\in\mathbb{Z}^{*}\times\mathbb{Z}.
\label{eq:9.26}\end{equation}
By substituting identity \eqref{eq:9.26} into identity \eqref{eq:9.20}, for those $\textbf{m},\textbf{n}\in\mathbb{Z}^{*}\times\mathbb{Z}$ such that $\textbf{m}+\textbf{n}\in\mathbb{Z}^{*}\times\mathbb{Z}$, , we have
\begin{equation}\begin{split}2\mathrm{det}\binom{\textbf{n}}{\textbf{m}}\left(\left(m_{1}+n_{1}\right)\left(\frac{1}{2}\right)^{m_{2}+n_{2}} a_{\textbf{e}_{1}}+
\left(1+i_{2}\right)\left(1-\left(\frac{1}{2}\right)^{m_{2}+n_{2}}\right) a_{\textbf{e}_{2}}\right)\\=
\left(\left(m_{2}+i_{2}\right)n_{1}-m_{1}n_{2}\right)\left(m_{1} \left(\frac{1}{2}\right)^{m_{2}}a_{\textbf{e}_{1}}+\left(1+i_{2}\right)\left(1-\left(\frac{1}{2}\right)^{m_{2}}\right) a_{\textbf{e}_{2}}\right)
\\+\left(m_{2}n_{1}-m_{1}\left(n_{2}+i_{2}\right)\right)\left(n_{1}\left(\frac{1}{2}\right)^{n_{2}}a_{\textbf{e}_{1}}+\left(1+i_{2}\right)\left(1-\left(\frac{1}{2}\right)^{n_{2}}\right) a_{\textbf{e}_{2}}\right).\end{split}\label{eq:9.27}\end{equation}

Now we need consider the two subcases:：$i_{2}= -1$ and $i_{2}\ne -1$.

\textbf{Subcase (i).} $i_{2}= -1$. Taking $\textbf{m}=(2,2)$, $\textbf{n}=(1,1)$ in identity \eqref{eq:9.27}, we get
$a_{\textbf{e}_{1}}=0.$
By substituting $i_{2}= -1$ and $a_{\textbf{e}_{1}}=0$ into identity \eqref{eq:9.26}, we get
$$a_{\textbf{m}}=0,~\forall~\textbf{m}\in\mathbb{Z}^{*}\times\mathbb{Z}.$$


\textbf{Subcase (ii).} $i_{2}\not= -1$. Taking $\textbf{m}=(1,1)$, $\textbf{n}=(2,2)$ and $\textbf{m}=(1,1)$, $\textbf{n}=(-2,-2)$ in identity \eqref{eq:9.27}, we get
$$\left\{\begin{matrix}
2a_{\textbf{e}_{1}}+\left(1+i_{2}\right)a_{\textbf{e}_{2}}=0,
 \\7a_{\textbf{e}_{1}}+2\left(1+i_{2}\right)a_{\textbf{e}_{2}}=0.
\end{matrix}\right.$$
Solving the system of linear equations yields
\begin{equation*}a_{\textbf{e}_{1}}=a_{\textbf{e}_{2}}=0.\label{eq:9.93}\end{equation*}
Taking $a_{\textbf{e}_{1}}=0$ and $a_{\textbf{e}_{2}}=0$ in \eqref{eq:9.26}, we get
$$a_{\textbf{m}}=0,~\forall~\textbf{m}\in\mathbb{Z}^{*}\times\mathbb{Z}.$$
Thus, in both of the two subcases, we proved $a_{\textbf{m}}=0,~\forall~\textbf{m}\in\mathbb{Z}^{*}\times\mathbb{Z}.$

By substituting $m_{1} = 1$, $\textbf{n}=(-1,0)$ in identity \eqref{eq:9.20}, we get
 $$a_{(0, m_2)}=0, ~\forall~ m_2\in\mathbb Z^{*}.$$

In summary, $$a_{\textbf{m}}=0,~\forall~\textbf{m}\in\mathbb{Z}^{2}\setminus\left\{\bf 0\right\}.$$
Thus, identities \eqref{eq:0.5} and \eqref{eq:0.6} transform into the following forms:
\begin{equation*}\label{eq:0.13}
2\mathrm{det}\binom{\textbf{n}}{\textbf{m}}b_{\textbf{m}+\textbf{n}}=\left(m_{2}n_{1}-m_{1}\left(n_{2}+i_{2}\right)\right)b_{\textbf{n}},
~\forall~\textbf{m},\textbf{n}\in\mathbb{Z}^{2}\setminus\left\{\bf 0\right\}\end{equation*}
\begin{equation*}\label{eq:0.15}
\left(m_{2}n_{1}-m_{1}\left(n_{2}+i_{2}\right)\right)b_{\textbf{n}}=\left(\left(m_{2}+i_{2}\right)n_{1}-m_{1}n_{2}\right)b_{\textbf{m}},
~\forall~\textbf{m},\textbf{n}\in\mathbb{Z}^{2}\setminus\left\{\bf 0\right\}.\end{equation*}
Adding the two identities above yields $\forall~\textbf{m},\textbf{n}\in\mathbb{Z}^{2}\setminus\left\{\bf 0\right\},$
\begin{equation}\label{eq:0.16}
4\mathrm{det}\binom{\textbf{n}}{\textbf{m}}b_{\textbf{m}+\textbf{n}}=
\left(\left(m_{2}+i_{2}\right)n_{1}-m_{1}n_{2}\right)b_{\textbf{m}}+\left(m_{2}n_{1}-m_{1}\left(n_{2}+i_{2}\right)\right)b_{\textbf{n}},
 \end{equation}
Taking $\textbf{n}= \textbf{e}_{1}$ and $\textbf{e}_{2}$ in identity \eqref{eq:0.16}, respectively, we get
\begin{equation}\label{eq:0.17}
4m_{2}b_{\textbf{m}+\textbf{e}_{1}}=\left(m_{2}+i_{2}\right)b_{\textbf{m}}+\left(m_{2}-m_{1}i_{2}\right)b_{\textbf{e}_{1}},
~\forall ~\textbf{m}\in\mathbb{Z}\times\mathbb{Z}^{*},\end{equation}
\begin{equation}\label{eq:0.18}
4b_{\textbf{m}+\textbf{e}_{2}}=b_{\textbf{m}}+\left(1+i_{2}\right)b_{\textbf{e}_{2}},
~\forall ~\textbf{m}\in\mathbb{Z}^{*}\times\mathbb{Z}.\end{equation}
Taking $m_{2}= 0$ in identity \eqref{eq:0.17}, we get
$$i_{2}\left(b_{(m_{1},0)}-m_{1}b_{\textbf{e}_{1}}\right)=0,~\forall~m_{1}\in\mathbb{Z}^{*}.$$
Since $i_{2}\not=0$, we get
\begin{equation}b_{(m_{1},0)}=m_{1}b_{\textbf{e}_{1}},~\forall  ~m_{1}\in\mathbb{Z}^{*}.\label{eq:0.19}\end{equation}
 Fix $m_1\in\mathbb Z^*$ and treat $\left(b_{\textbf{m}}-\left(\frac{1+i_{2}}{3}\right)b_{\textbf{e}_{2}}\right)_{m_{2}\in\mathbb{Z}}$ as a geometric sequence,  then by identity \eqref{eq:0.18} we have
\begin{equation*}b_{\textbf{m}}=\left( b_{(m_{1},0)}-\left(\frac{1+i_{2}}{3}\right)b_{\textbf{e}_{2}}\right)\left(\frac{1}{4}\right)^{m_{2}}+\left(\frac{1+i_{2}}{3}\right)b_{\textbf{e}_{2}},
~\forall ~\textbf{m}\in\mathbb{Z}^{*}\times\mathbb{Z}.\label{eq:0.21} \end{equation*}
By substituting identity \eqref{eq:0.19} into the identity above, we obtain
\begin{equation}\label{eq:0.21}
b_{\textbf{m}}=m_{1}\left(\frac{1}{4}\right)^{m_{2}}b_{\textbf{e}_{1}}+\left(\frac{1+i_{2}}{3}\right)\left(1-\left(\frac{1}{4}\right)^{m_{2}}\right)b_{\textbf{e}_{2}},
~\forall ~ \textbf{m}\in\mathbb{Z}^{*}\times\mathbb{Z}.
\end{equation}
By substituting identity \eqref{eq:0.21} into identity \eqref{eq:0.16}, for those $\textbf{m},\textbf{n}\in\mathbb{Z}^{*}\times\mathbb{Z}$, such that $\textbf{m}+\textbf{n}\in\mathbb{Z}^{*}\times\mathbb{Z}$ ,  we get
\begin{equation}\begin{split}4\mathrm{det}\binom{\textbf{n}}{\textbf{m}}\left(\left(m_{1}+n_{1}\right)\left(\frac{1}{4}\right)^{m_{2}+n_{2}} b_{\textbf{e}_{1}}+
\left(\frac{1+i_{2}}{3}\right)\left(1-\left(\frac{1}{4}\right)^{m_{2}+n_{2}}\right) b_{\textbf{e}_{2}}\right)\\
=
\left(\left(m_{2}+i_{2}\right)n_{1}-m_{1}n_{2}\right)\left(m_{1} \left(\frac{1}{4}\right)^{m_{2}}b_{\textbf{e}_{1}}+\left(\frac{1+i_{2}}{3}\right)\left(1-\left(\frac{1}{4}\right)^{m_{2}}\right) b_{\textbf{e}_{2}}\right)
\\+\left(m_{2}n_{1}-m_{1}\left(n_{2}+i_{2}\right)\right)\left(n_{1}\left(\frac{1}{4}\right)^{n_{2}}b_{\textbf{e}_{1}}+\left(\frac{1+i_{2}}{3}\right)\left(1-\left(\frac{1}{4}\right)^{n_{2}}\right) b_{\textbf{e}_{2}}\right).\end{split}\label{eq:0.22}\end{equation}

Now we need to consider the two subcases: $i_{2}= -1$ and $i_{2}\ne -1$.

\textbf{Subcase (i).} $i_{2}= -1$. Taking $\textbf{m}=(2,2)$, $\textbf{n}=(2,1)$ in identity \eqref{eq:0.22}, we have
$b_{\textbf{e}_{1}}=0.$
By substituting $i_{2}= -1$ and $b_{\textbf{e}_{1}}=0$ into identity \eqref{eq:0.21}, we get
$$b_{\textbf{m}}=0,~\forall~\textbf{m}\in\mathbb{Z}^{*}\times\mathbb{Z}.$$

\textbf{Subcase (ii).} $i_{2}\not= -1$. Taking $\textbf{m}=(1,1)$, $\textbf{n}=(-2,-2)$ and $\textbf{m}=(1,1)$, $\textbf{n}=(2,2)$ in identity \eqref{eq:0.22}, respectively, we obtain
$$\left\{\begin{matrix}
7b_{\textbf{e}_{1}}+\left(1+i_{2}\right)b_{\textbf{e}_{2}}=0,
 \\2b_{\textbf{e}_{1}}+\left(1+i_{2}\right)b_{\textbf{e}_{2}}=0.
\end{matrix}\right.$$
Solving the system of linear equations yields
\begin{equation*}b_{\textbf{e}_{1}}=b_{\textbf{e}_{2}}=0.\end{equation*}
Substuting $b_{\textbf{e}_{1}}=0$ and $b_{\textbf{e}_{2}}=0$ into identity \eqref{eq:0.21}, we have
$$b_{\textbf{m}}=0,~\forall~\textbf{m}\in\mathbb{Z}^{*}\times\mathbb{Z}.$$
Thus, in both of the two subcases we proved
$$b_{\textbf{m}}=0,~\forall~\textbf{m}\in\mathbb{Z}^{*}\times\mathbb{Z}.$$

Taking $n_{1} = -1$, $\textbf{m}=(1,0)$ in identity \eqref{eq:0.16}, we get
$$b_{(0,n_{2})}=0,~\forall  ~n_{2}\in\mathbb{Z}^{*}.$$
In summary, we proved
$$b_{\textbf{m}}=0,~\forall~\textbf{m}\in\mathbb{Z}^{2}\setminus\left\{\bf 0\right\}.$$

\textbf{Case 3.} $\textbf{i}\in\mathbb{Z}^{*}\times\mathbb{Z}^{*}$. 

On the one hand, taking $\textbf{n}=(1,0)$, $m_{2}= 0$ in identity \eqref{eq:0.4}, we get
\begin{equation}a_{(m_{1},0)}=m_{1}a_{\textbf{e}_{1}}, ~\forall  ~m_{1}\in\mathbb{Z}^{*}.\label{eq:9.33}\end{equation}
On the other hand, taking $\textbf{n}=(0,1)$, $m_{1}= 0$ in \eqref{eq:0.4}, we have
\begin{equation}a_{(0,m_{2})}=m_{2}a_{\textbf{e}_{2}},~\forall  ~m_{2}\in\mathbb{Z}^{*}.\label{eq:9.34}\end{equation}
Taking $n_{1} = 0$ and $m_{2} = 0$ in identity \eqref{eq:0.4}, we have
\begin{equation*}2m_{1}n_{2}a_{(m_{1},n_{2})}=n_{2}\left(m_{1}+i_{1}\right)a_{(m_{1},0)}+m_{1} \left(n_{2}+i_{2} \right)a_{(0,n_{2})},~\forall  ~m_{1},n_{2}\in\mathbb{Z}^{*}.\end{equation*}
Substituting identities \eqref{eq:9.33} and\eqref{eq:9.34} into the above identity, we get
\begin{equation*}2m_{1}n_{2}a_{(m_{1},n_{2})}=n_{2}\left(m_{1}+i_{1}\right)m_{1}a_{\textbf{e}_{1}}+m_{1}\left(n+i_{2}\right)n_{2}a_{\textbf{e}_{2}},~\forall  ~m_{1},n_{2}\in\mathbb{Z}^{*}.\end{equation*}
Then  we have
\begin{equation}2a_{(m_{1},n_{2})}=\left(m_{1}+i_{1}\right)a_{\textbf{e}_{1}}+\left(n_{2}+i_{2}\right)a_{\textbf{e}_{2}},~\forall  ~m_{1},n_{2}\in\mathbb{Z}^{*}.\label{eq:9.36}\end{equation}
By substituting \eqref{eq:9.36} into \eqref{eq:0.4}, for those $\textbf{m},\textbf{n}\in\mathbb{Z}^{*}\times\mathbb{Z}^{*}$ such that ${\bf m+n}\in\mathbb Z^*\times\mathbb Z^*$, we get
\begin{eqnarray*}
&&2\mathrm{det}\binom{\textbf{n}}{\textbf{m}}\left(\left(m_{1}+n_{1}+i_{1}\right) a_{\textbf{e}_{1}}+\left(m_{2}+n_{2}+i_{2}\right)a_{\textbf{e}_{2}}\right)
\\
&=&\mathrm{det}\binom{\textbf{n}}{\textbf{m}+\textbf{i}}\left(\left(m_{1}+i_{1}\right)a_{\textbf{e}_{1}}+\left(m_{2}+i_{2}\right)a_{\textbf{e}_{2}}\right)+\mathrm{det}\binom{\textbf{n}+\textbf{i}}{\textbf{m}}\left(\left(n_{1}+i_{1}\right)a_{\textbf{e}_{1}}+\left(n_{2}+i_{2}\right)a_{\textbf{e}_{2}}\right).\end{eqnarray*}
Particularly,  taking $\textbf{m}=(-2i_{1},i_{2})$, $\textbf{n}=(-i_{1},i_{2})$ and $\textbf{m}=(i_{1},-i_{2})$, $\textbf{n}=(-i_{1},i_{2})$, respectively, we get
$$\left\{\begin{matrix}
5i_{1}a_{\textbf{e}_{1}}&=0, \\
i_{1}a_{\textbf{e}_{1}}+i_{2}a_{\textbf{e}_{2}}&=0.
\end{matrix}\right.$$
Hence, it immediately follows that
$$a_{\textbf{e}_{1}}=a_{\textbf{e}_{2}}=0.$$
Substituting $a_{\textbf{e}_{1}}=0$ and $a_{\textbf{e}_{2}}=0$ into identity \eqref{eq:9.36}, we get
\begin{equation*}a_{\textbf{m}}=0,~\forall~ \textbf{m}\in\mathbb{Z}^{*}\times \mathbb{Z}^{*}.\end{equation*}
Substituting $a_{\textbf{e}_{1}}=0$ into identity \eqref{eq:9.33}, we get
\begin{equation*}a_{(m_{1},0)}=0,~\forall~ m_{1}\in\mathbb{Z}^*.\end{equation*}
Substituting $a_{\textbf{e}_{2}}=0$ into identity \eqref{eq:9.34}, we get
\begin{equation*}a_{(0,m_{2})}=0,~\forall ~m_{2}\in\mathbb{Z}^*.\end{equation*}
In summary, we proved
$$a_{\textbf{m}}=0,~\forall~\textbf{m}\in\mathbb{Z}^{2}\setminus\left\{\bf 0\right\}.$$
Then identities \eqref{eq:0.5} and \eqref{eq:0.6} transform into the following forms:
\begin{equation*}\label{eq:0.24}
2\mathrm{det}\binom{\textbf{n}}{\textbf{m}}b_{\textbf{m}+\textbf{n}}=\mathrm{det}\binom{\textbf{n}+\textbf{i}}{\textbf{m}}b_{\textbf{n}},
~\forall~\textbf{m},\textbf{n}\in\mathbb{Z}^{2}\setminus\left\{\bf 0\right\}\end{equation*}
\begin{equation*}\label{eq:0.26}
\mathrm{det}\binom{\textbf{n}+\textbf{i}}{\textbf{m}}b_{\textbf{n}}=\mathrm{det}\binom{\textbf{n}}{\textbf{m}+\textbf{i}}b_{\textbf{m}},
~\forall~\textbf{m},\textbf{n}\in\mathbb{Z}^{2}\setminus\left\{\bf 0\right\}.\end{equation*}
Adding the two identities above yields
\begin{equation}\label{eq:0.27}
4\mathrm{det}\binom{\textbf{n}}{\textbf{m}} b_{\textbf{m}+\textbf{n}}=\mathrm{det}\binom{\textbf{n}}{\textbf{m}+\textbf{i}}b_{\textbf{m}}+
\mathrm{det}\binom{\textbf{n}+\textbf{i}}{\textbf{m}}b_{\textbf{n}},
~\forall~\textbf{m},\textbf{n}\in\mathbb{Z}^{2}\setminus\left\{\bf 0\right\}.\end{equation}
On the one hand,  taking $\textbf{n}=(1,0)$, $m_{2}= 0$ in identity \eqref{eq:0.27} 
$$i_{2}\left(b_{(m_{1},0)}-m_{1}b_{\textbf{e}_{1}}\right)=0,~\forall  ~m_{1}\in\mathbb{Z}^{*}.$$
Since $i_{2}\not=0$, then
\begin{equation}\label{eq:0.28}
b_{(m_{1},0)}=m_{1}b_{\textbf{e}_{1}}, ~\forall  ~m_{1}\in\mathbb{Z}^{*}.\end{equation}
On the other hand,  taking $\textbf{n}=(0,1)$, $m_{1}= 0$ in identity \eqref{eq:0.27} 
$$i_{1}\left(b_{(0,m_{2})}-m_{2}b_{\textbf{e}_{2}}\right)=0,~\forall  ~m_{2}\in\mathbb{Z}^{*}.$$
Since $i_{1}\not=0$, then
\begin{equation}\label{eq:0.29}
b_{(0,m_{2})}=m_{2}b_{\textbf{e}_{2}},~\forall  ~m_{2}\in\mathbb{Z}^{*}.\end{equation}
Taking $n_{1} = 0$, $m_{2} = 0$ in identity \eqref{eq:0.27} 
\begin{equation*}
4m_{1}n_{2}b_{(m_{1},n_{2})}=n_{2}\left(m_{1}+i_{1}\right)b_{(m_{1},0)}+m_{1} \left(n_{2}+i_{2} \right)b_{(0,n_{2})},~\forall  ~m_{1},n_{2}\in\mathbb{Z}^{*}.\end{equation*}
Substituting identities \eqref{eq:0.28} and \eqref{eq:0.29} into the above identity, we get
\begin{equation*}
4m_{1}n_{2}b_{(m_{1},n_{2})}=n_{2}\left(m_{1}+i_{1}\right)m_{1}b_{\textbf{e}_{1}}+m_{1}\left(n_2+i_{2}\right)n_{2}b_{\textbf{e}_{2}},~\forall  ~m_{1},n_{2}\in\mathbb{Z}^{*}.\end{equation*}
Then 
\begin{equation}\label{eq:0.32}
4b_{(m_{1},n_{2})}=\left(m_{1}+i_{1}\right)b_{\textbf{e}_{1}}+\left(n_{2}+i_{2}\right)b_{\textbf{e}_{2}},~\forall  ~m_{1},n_{2}\in\mathbb{Z}^{*}.\end{equation}
Substituting identity \eqref{eq:0.32} into identity \eqref{eq:0.27}, then for those $\textbf{m},\textbf{n}\in\mathbb{Z}^{*}\times\mathbb{Z}^{*}$ such that ${\bf m+n}\in \mathbb{Z}^{*}\times\mathbb{Z}^{*}$, we get
\begin{eqnarray*}
&&4\mathrm{det}\binom{\textbf{n}}{\textbf{m}}\left(\left(m_{1}+n_{1}+i_{1}\right) b_{\textbf{e}_{1}}+\left(m_{2}+n_{2}+i_{2}\right)b_{\textbf{e}_{2}}\right)
\\
&=&\mathrm{det}\binom{\textbf{n}}{\textbf{m}+\textbf{i}}\left(\left(m_{1}+i_{1}\right)b_{\textbf{e}_{1}}+\left(m_{2}+i_{2}\right)b_{\textbf{e}_{2}}\right)
+\mathrm{det}\binom{\textbf{n}+\textbf{i}}{\textbf{m}}\left(\left(n_{1}+i_{1}\right)b_{\textbf{e}_{1}}+\left(n_{2}+i_{2}\right)b_{\textbf{e}_{2}}\right).\end{eqnarray*}
Particularly, taking $\textbf{m}=(-i_{1},i_{2})$, $\textbf{n}=(i_{1},-i_{2})$ and $\textbf{m}=(-i_{1},2i_{2})$, $\textbf{n}=(i_{1},-i_{2})$, respectively, we have
$$\left\{\begin{matrix}
i_{1}b_{\textbf{e}_{1}}+i_{2}b_{\textbf{e}_{2}}=0, \\
4i_{1}b_{\textbf{e}_{1}}+i_{2}b_{\textbf{e}_{2}}=0.
\end{matrix}\right.$$
Solving the system of linear equations above yields
$$b_{\textbf{e}_{1}}=b_{\textbf{e}_{2}}=0.$$
Substituting $b_{\textbf{e}_{1}}=0$ and $b_{\textbf{e}_{2}}=0$ into identity \eqref{eq:0.32}, we obtain
\begin{equation*}b_{\textbf{m}}=0,~\forall~ \textbf{m}\in\mathbb{Z}^{*}\times \mathbb{Z}^{*}.\end{equation*}
Substituting $b_{\textbf{e}_{1}}=0$ into identity \eqref{eq:0.28}, we get
\begin{equation*}b_{(m_{1},0)}=0,~\forall~ m_{1}\in\mathbb{Z}^{*}.\end{equation*}
Substituting $b_{\textbf{e}_{2}}=0$ into identity \eqref{eq:0.29}, we get
\begin{equation*}b_{(0,m_{2})}=0,~\forall~ m_{2}\in\mathbb{Z}^{*}.\end{equation*}
In summary, we proved
$$b_{\textbf{m}}=0,~\forall~\textbf{m}\in\mathbb{Z}^{2}\setminus\left\{\bf 0\right\}.$$

Combining all the analyses above, we deduce that $\varphi=0$.
\end{proof}

Secondly, we consider the even linear maps $\varphi: L(V)\to L(V)$,   which are characterized by their property of satisfying
$$\varphi\left( L(V)_{\bar{0}}\right)\subseteq L(V)_{\bar{0}},~~~\varphi\left( L(V)_{\bar{1}}\right)\subseteq L(V)_{\bar{1}}.$$
We thus have $\left | \varphi \right |=0$, and $\varphi$ is a $\frac 12$-superderivation of $L(V)$ if and only if
$$\varphi\left ( \left[x,y\right] \right ) =\frac{1}{2} \left(\left[\varphi\left ( x\right ),y\right]+\left[x,\varphi\left ( y\right )\right]\right), ~\forall~ x, y\in L(V)_{\bar 0}\cup L(V)_{\bar 1}.$$

\begin{theorem}\label{D}
 $\Delta _{\overline 0} \left (L(V) \right )=\left<{\rm id}\right>.$
\end{theorem}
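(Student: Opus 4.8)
The plan is to run the same machine as in Theorem~\ref{A}, now for a parity-preserving map. Using the $\mathbb{Z}^{2}$-grading \eqref{eq3.1} I write $\varphi=\sum_{\textbf{i}\in\mathbb{Z}^{2}}\varphi_{\textbf{i}}$, where each $\varphi_{\textbf{i}}$ is an even $\frac{1}{2}$-superderivation of degree $\textbf{i}$, and reduce to computing each $\varphi_{\textbf{i}}$ separately. Since $\varphi$ is even it respects the $\mathbb{Z}_{2}$-grading, so there are scalars $a_{\textbf{m}},b_{\textbf{m}}$ with $\varphi_{\textbf{i}}(L_{\textbf{m}})=a_{\textbf{m}}L_{\textbf{m}+\textbf{i}}$ and $\varphi_{\textbf{i}}(G_{\textbf{m}})=b_{\textbf{m}}G_{\textbf{m}+\textbf{i}}$. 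Applying $\varphi_{\textbf{i}}$ to the defining products \eqref{eq1.1}--\eqref{eq1.3} and comparing coefficients gives, for all $\textbf{m},\textbf{n}\in\mathbb{Z}^{2}\setminus\{\textbf{0}\}$,
\begin{align*}
2\det\binom{\textbf{n}}{\textbf{m}}a_{\textbf{m}+\textbf{n}}&=\det\binom{\textbf{n}}{\textbf{m}+\textbf{i}}a_{\textbf{m}}+\det\binom{\textbf{n}+\textbf{i}}{\textbf{m}}a_{\textbf{n}},\\
2a_{\textbf{m}+\textbf{n}}&=b_{\textbf{m}}+b_{\textbf{n}},\\
2\det\binom{\textbf{n}}{\textbf{m}}b_{\textbf{m}+\textbf{n}}&=\det\binom{\textbf{n}}{\textbf{m}+\textbf{i}}a_{\textbf{m}}+\det\binom{\textbf{n}+\textbf{i}}{\textbf{m}}b_{\textbf{n}}.
\end{align*}

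The decisive observation is that the first of these relations is word-for-word identical to \eqref{eq:0.4} in the proof of Theorem~\ref{A}. Hence the entire case analysis performed there for the coefficients $a_{\textbf{m}}$ (the cases $\textbf{i}=\textbf{0}$, $\textbf{i}\in(\{0\}\times\mathbb{Z}^{*})\cup(\mathbb{Z}^{*}\times\{0\})$, and $\textbf{i}\in\mathbb{Z}^{*}\times\mathbb{Z}^{*}$) transfers verbatim, and I may simply invoke it: for $\textbf{i}\neq\textbf{0}$ one obtains $a_{\textbf{m}}=0$ for every $\textbf{m}$, while for $\textbf{i}=\textbf{0}$ one obtains $a_{\textbf{m}}=a$ for a single constant $a$ independent of $\textbf{m}$. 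This is the computationally heavy part, but it requires no new work; the only difference from Theorem~\ref{A} is that in the even case we do not possess the odd $b$-equations \eqref{eq:0.5}--\eqref{eq:0.6} that previously forced $a=0$, so $a$ survives as a free parameter.

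It then remains to pin down the $b_{\textbf{m}}$, for which I use the second relation $2a_{\textbf{m}+\textbf{n}}=b_{\textbf{m}}+b_{\textbf{n}}$ coming from \eqref{eq1.3}, which is new to the even case and, crucially, very rigid. For $\textbf{i}=\textbf{0}$ we have $a_{\textbf{m}}\equiv a$, so $b_{\textbf{m}}+b_{\textbf{n}}=2a$; taking $\textbf{n}=\textbf{m}$ (always admissible since $2\textbf{m}\neq\textbf{0}$) forces $b_{\textbf{m}}=a$ for all $\textbf{m}$, whence $\varphi_{\textbf{0}}=a\,\mathrm{id}$. For $\textbf{i}\neq\textbf{0}$ we have $a_{\textbf{m}}\equiv0$, so $b_{\textbf{m}}+b_{\textbf{n}}=0$; taking $\textbf{n}=\textbf{m}$ (legitimate for all but at most the single $\textbf{m}$ with $2\textbf{m}=-\textbf{i}$, which is dispatched by another admissible choice of $\textbf{n}$) gives $b_{\textbf{m}}=0$ throughout, so $\varphi_{\textbf{i}}=0$. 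In each case I would finally check that the third relation imposes no extra constraint: substituting $a_{\textbf{m}}=b_{\textbf{m}}=a$ (resp.\ $a_{\textbf{m}}=b_{\textbf{m}}=0$) makes both of its sides equal to $2a\det\binom{\textbf{n}}{\textbf{m}}$ (resp.\ $0$).

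Assembling the pieces, $\varphi_{\textbf{i}}=0$ for every $\textbf{i}\neq\textbf{0}$ and $\varphi_{\textbf{0}}=a\,\mathrm{id}$, so $\varphi=a\,\mathrm{id}$ and $\Delta_{\overline{0}}(L(V))\subseteq\langle\mathrm{id}\rangle$; the reverse inclusion is immediate since $\mathrm{id}([x,y])=[x,y]=\frac{1}{2}([x,y]+[x,y])$ exhibits $\mathrm{id}$ as an even $\frac{1}{2}$-superderivation. I expect the only genuine subtlety to lie in the reuse step: one must be certain that every substitution made in Cases 1--3 of Theorem~\ref{A} determined the coefficients $a_{\textbf{m}}$ using \emph{only} the first relation, never secretly appealing to the odd $b$-equations. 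Once this is confirmed, the even computation is strictly shorter than the odd one, and the constant $a$ is precisely what produces the extra scalar direction $\langle\mathrm{id}\rangle$ that is absent from $\Delta_{\overline{1}}(L(V))$.
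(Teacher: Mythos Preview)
Your proposal is correct and follows essentially the same route as the paper: both decompose $\varphi$ into $\mathbb{Z}^{2}$-homogeneous components, observe that the first relation on the $a_{\textbf{m}}$ coincides with \eqref{eq:0.4} from Theorem~\ref{A} so that the entire case analysis there can be invoked verbatim (giving $a_{\textbf{m}}$ constant for $\textbf{i}=\textbf{0}$ and $a_{\textbf{m}}=0$ otherwise), and then substitute $\textbf{n}=\textbf{m}$ into the second relation $2a_{2\textbf{m}}=2b_{\textbf{m}}$ to determine the $b_{\textbf{m}}$. The only difference is that you additionally write down and verify the third relation coming from \eqref{eq1.2}, which the paper omits; and you correctly flag the one point to check, namely that the determination of the $a_{\textbf{m}}$ in Theorem~\ref{A} relies solely on \eqref{eq:0.4} and never on \eqref{eq:0.5}--\eqref{eq:0.6}, which indeed holds.
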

\begin{proof}
Let $\varphi$ be an even $\frac 12$-superderivation of $L(V)$, then by identity \eqref{eq3.1}, we can write $\varphi=\sum\limits_{{\bf i}\in\mathbb Z^2}\varphi_{\bf i}$, where $\varphi_{\bf i}$ is also an even $\frac 12$-superderivation of $L(V)$ for all ${\bf i}\in \mathbb Z^2.$ 
For $\textbf{i}\in \mathbb{Z}^{2}$, since $\varphi\left( L(V)_{\bar{0}}\right)\subseteq L(V)_{\bar{0}}$, $\varphi\left( L(V)_{\bar{1}}\right)\subseteq L(V)_{\bar{1}}$, we suppose 
\begin{equation*}\varphi _{\textbf{i}}\left(L_{\textbf{m}}\right)=c_{\textbf{m}} L_{\textbf{m}+\textbf{i}},\end{equation*}
\begin{equation*}\varphi _{\textbf{i}}\left(G_{\textbf{m}}\right)=d_{\textbf{m}} G_{\textbf{m}+\textbf{i}}.\end{equation*}
where $\textbf{m}\in\mathbb{Z}^{2}\setminus\left\{\bf 0\right\}$. Applying $\varphi _{\textbf{i}}$ to identities \eqref{eq1.1} and \eqref{eq1.3}, we get $\forall~ {\bf m, n}\in\mathbb Z^2\setminus\{\bf 0\},$
\begin{equation}2\mathrm{det}\binom{\textbf{n}}{\textbf{m}} c_{\textbf{m}+\textbf{n}}=
\mathrm{det}\binom{\textbf{n}}{\textbf{m}+\textbf{i}}c_{\textbf{m}}+\mathrm{det}\binom{\textbf{n}+\textbf{i}}{\textbf{m}} c_{\textbf{n}},\label{eq:0.41}\end{equation}
\begin{equation}2c_{\textbf{n}+\textbf{m}}=d_{\textbf{n}}+d_{\textbf{m}}.\label{eq:0.42}\end{equation}
To determine the cofficients, we need to consider the following cases: 

\textbf{Case 1.} $\textbf{i}={\bf 0}$. 

By identity \eqref{eq:0.41} By identity \eqref{eq:0.4}, we get
\begin{equation}\mathrm{det}\binom{\textbf{n}}{\textbf{m}}\left(2c_{\textbf{m+n}}-c_{\textbf{m}}-c_{\textbf{n}}\right)=0,~\forall ~ \textbf{m},\textbf{n}\in\mathbb{Z}^{2}\setminus\left\{\bf 0\right\}.\label{eq:9.9}\end{equation}
Particularly, by taking $\textbf{n}= \textbf{e}_{1}$ and $\textbf{e}_{2}$ in identity \eqref{eq:9.9}, respectively, we have
\begin{equation}2c_{\textbf{m}+\textbf{e}_{1}}-c_{\textbf{m}}-c_{\textbf{e}_{1}}=0,~\forall~\textbf{m}\in\mathbb{Z}\times\mathbb{Z}^{*}.\label{eq:9.11}\end{equation}
\begin{equation}2c_{\textbf{m}+\textbf{e}_{2}}-c_{\textbf{m}}-c_{\textbf{e}_{2}}=0 ,~\forall~\textbf{m}\in\mathbb{Z}^{*}\times\mathbb{Z}.\label{eq:9.12}\end{equation}
 Fix $m_{2}\in\mathbb{Z}^{*}$ and treat $\left( c_{\textbf{m}}-c_{\textbf{e}_{1}}\right)_{m_{1}\in\mathbb{Z}}$ as a geometric sequence, then by identity \eqref{eq:9.11}, we have
\begin{equation} c_{\textbf{m}}=\left( c_{(1,m_{2})}-c_{\textbf{e}_{1}}\right)\left(\frac{1}{2}\right)^{m_{1}-1}
+c_{\textbf{e}_{1}},~\forall~\textbf{m}\in\mathbb{Z}\times\mathbb{Z}^{*}. \label{eq:9.13}\end{equation}
 Fix $m_{1}\in\mathbb{Z}^{*}$ and treat $\left(c_{\textbf{m}}-c_{\textbf{e}_{2}}\right)_{m_{2}\in\mathbb{Z}}$ as a geometric sequence, then by identity \eqref{eq:9.12},we have
\begin{equation*} c_{\textbf{m}}=\left( c_{(m_{1},0)}-c_{\textbf{e}_{2}}\right)\left(\frac{1}{2}\right)^{m_{2}}
+c_{\textbf{e}_{2}},~\forall~\textbf{m}\in\mathbb{Z}^{*}\times\mathbb{Z}. \label{eq:9.14}\end{equation*}
Taking $m_{1}=1$ in the above identity, we get
\begin{equation*} c_{(1,m_{2})}=\left(c_{\textbf{e}_{1}}-c_{\textbf{e}_{2}}\right)\left(\frac{1}{2}\right)^{m_{2}}+c_{\textbf{e}_{2}},~\forall~m_{2}\in\mathbb{Z}.\label{eq:9.15} \end{equation*}
By substituting the above identity into \eqref{eq:9.13}, we have
\begin{equation}c_{\textbf{m}}=\left(c_{\textbf{e}_{1}}-c_{\textbf{e}_{2}}\right)\left ( \left(\frac{1}{2}\right)^{m_{2}}-1 \right )\left ( \frac{1}{2}\right)^{m_{1}-1} +c_{\textbf{e}_{1}},~\forall~\textbf{m}\in \mathbb{Z}\times \mathbb{Z}^{*}.\label{eq:9.18}\end{equation}
Substituting ientity \eqref{eq:9.18} into \eqref{eq:9.9}, then for those $\textbf{m},\textbf{n}\in\mathbb{Z}\times\mathbb{Z}^{*}$ such that $\textbf{m}+\textbf{n}\in\mathbb{Z}\times\mathbb{Z}^{*}$, we get
\begin{eqnarray*}
\mathrm{det}\binom{\textbf{n}}{\textbf{m}}\left(c_{\textbf{e}_{1}}-c_{\textbf{e}_{2}}\right)&&
(\left(\left(\frac{1}{2}\right)^{m_{2}+n_{2}}-1\right)\left(\frac{1}{2}\right)^{m_{1}+n_{1}-2}-
\left(\left(\frac{1}{2}\right)^{m_{2}}-1\right)\left(\frac{1}{2}\right)^{m_{1}-1}\\
&&-\left(\left(\frac{1}{2}\right)^{n_{2}}-1\right)\left(\frac{1}{2}\right)^{n_{1}-1})=0.
\end{eqnarray*}
Setting $\textbf{m}=\left(2,1\right)$, $\textbf{n}=\left(1,1\right)$ in the identity above, we get
$$\frac{3}{8}\left(c_{\textbf{e}_{1}}-c_{\textbf{e}_{2}}\right)=0,$$
so
\begin{equation*}c_{\textbf{e}_{1}}=c_{\textbf{e}_{2}}.\label{eq:9.17}\end{equation*}
Then by identity \eqref{eq:9.18}, we have
\begin{equation}c_{\textbf{m}}=c_{\textbf{e}_{1}}=c_{\textbf{e}_{2}},~\forall~\textbf{m}\in \mathbb{Z}\times \mathbb{Z}^{*}.\end{equation}
Particularly,
$$c_{(m_{1},-1)}=c_{\textbf{e}_{2}},~\forall~ m_{1}\in \mathbb{Z}.$$
By setting $m_{2}=-1$ in identity \eqref{eq:9.12} and substituting the identity, we have
\begin{equation*}c_{(m_{1},0)}=c_{\textbf{e}_{2}}=c_{\textbf{e}_{1}},~\forall~ m_{1}\in \mathbb{Z}^{*}.\end{equation*}
Then we find that for all $\textbf{m}\in\mathbb{Z}^{2}\setminus\left\{\bf 0\right\}$, $c_{\textbf{m}}$ is equal to a constant, which we  denote by $c$. 
Then by taking $\textbf{n}=\textbf{m}$ in \eqref{eq:0.42}, we get $2c=2d_{\textbf{m}}$, $\forall {\bf m}\in\mathbb Z^2\setminus\{\bf 0\}.$ Then we know $\forall {\bf m}\in\mathbb Z^2\setminus\{{\bf 0}\}, d_{\textbf{n}}=c.$

\textbf{Case 2.} $\textbf{i}\in \left\{0\right\} \times \mathbb{Z}^{*}$ or $\mathbb{Z}^{*}\times\left\{0\right\} $.
 
 Without loss of generality, we suppose $\textbf{i}\in \left\{0\right\} \times \mathbb{Z}^{*}$.
 
 By identity \eqref{eq:0.41} and the same arguments as in \textbf{Case 2.} of Theorem \ref{A}, we know that $c_{\textbf{m}}=0$ for all ${\bf m}\in\mathbb Z^2\setminus\{{\bf 0}\}$. Then by taking $\textbf{n}=\textbf{m}$ in identity \eqref{eq:0.42}, we get $d_{\textbf{n}}=0$ for all ${\bf m}\in\mathbb Z^2\setminus\{{\bf 0}\}$.

\textbf{Case 3.} $\textbf{i}\in\mathbb{Z}^{*}\times\mathbb{Z}^{*}$. 

By identity \eqref{eq:0.41} and the same arguments as in \textbf{Case 3.} of Theorem \ref{A}, we get $c_{\textbf{m}}=0$ for all ${\bf m}\in\mathbb Z^2\setminus\{{\bf 0}\}$. Then by taking $\textbf{n}=\textbf{m}$ in identity \eqref{eq:0.42}, we get $d_{\textbf{n}}=0$ for all ${\bf m}\in\mathbb Z^2\setminus\{{\bf 0}\}.$ 

In summary, we have proven that there exists $c\in\mathbb C$ such that $\varphi=c{\rm id}$.
\end{proof}

By Theorem \ref{A}, Theorem \ref{D} and Lemma \ref{lem2.3}, we have

\begin{theorem}
There are no non-trivial transposed Poisson superalgebra structures defined on $(L(V), [\cdot, \cdot])$.
\end{theorem}

\subsection{Transposed $-1$-Poisson superalgebra structure on $L(V)$}
Firstly, we determine all the odd $-1$-superderivations of $L(V)$.

\begin{theorem}\label{A.}
$\Delta _{\overline 1} \left ( L(V) \right ) =\{0\}$.
\end{theorem}
\begin{proof} Let $\varphi$ be an odd -1-superderivation of $L(V)$, then by identity \eqref{eq3.1}, we can write $\varphi=\sum\limits_{{\bf i}\in\mathbb Z^2}\varphi_{\bf i}$, where $\varphi_{\bf i}$ is also an odd -1-superderivation of $L(V)$ for all ${\bf i}\in \mathbb Z^2.$ 
Let $\textbf{i}\in \mathbb{Z}^{2}$, due $\varphi_{\bf i}\left( L(V)_{\bar{0}}\right)\subseteq L(V)_{\bar{1}}$, $\varphi_{\bf i}\left( L(V)_{\bar{1}}\right)\subseteq L(V)_{\bar{0}}$, 
we suppose 
\begin{equation*}\varphi _{\textbf{i}}\left(L_{\textbf{m}}\right)=a_{\textbf{m}} G_{\textbf{m}+\textbf{i}},\end{equation*}
\begin{equation*}\varphi _{\textbf{i}}\left(G_{\textbf{m}}\right)=b_{\textbf{m}} L_{\textbf{m}+\textbf{i}}.\end{equation*}
for all $\textbf{m}\in\mathbb{Z}^{2}\setminus\left\{\bf 0\right\}$. Applying $\varphi _{\textbf{i}}$ to identities \eqref{eq1.1}-\eqref{eq1.3}, we obtain $\forall~ {\bf m, n}\in \mathbb Z^2\setminus\{\bf 0\},$
\begin{equation}\label{eq:0.4.}
\mathrm{det}\binom{\textbf{n}}{\textbf{m}} a_{\textbf{m}+\textbf{n}}=
-\mathrm{det}\binom{\textbf{n}}{\textbf{m}+\textbf{i}}a_{\textbf{m}}-\mathrm{det}\binom{\textbf{n}+\textbf{i}}{\textbf{m}} a_{\textbf{n}},
\end{equation}
\begin{equation}\label{eq:0.5.}
\mathrm{det}\binom{\textbf{n}}{\textbf{m}} b_{\textbf{m}+\textbf{n}}=-a_{\textbf{m}}-\mathrm{det}\binom{\textbf{n}+\textbf{i}}{\textbf{m}} b_{\textbf{n}},\end{equation}
\begin{equation}\label{eq:0.6.}
a_{\textbf{m}+\textbf{n}}=-
\mathrm{det}\binom{\textbf{n}}{\textbf{m}+\textbf{i}}b_{\textbf{m}}+\mathrm{det}\binom{\textbf{n}+\textbf{i}}{\textbf{m}} b_{\textbf{n}}.\end{equation}
 To determine the coefficients, we need to consider the following cases:

\textbf{Case 1.} $\textbf{i}={\bf 0}$.

 We know that for all ${\bf m}\in \mathbb Z^2\setminus\{\bf 0\},$ there exists an $n\in\mathbb Z^2\setminus\{\bf 0\},$ such that $\det\binom{\bf n}{\bf m}=0.$
 Thus by \eqref{eq:0.5.}, we obtain $\forall~ {\bf m}\in \mathbb Z^2\setminus\{{\bf 0}\}, a_{\bf m}=0. $  

Then the identities \eqref{eq:0.5.} and \eqref{eq:0.6.} become
\begin{equation}\mathrm{det}\binom{\textbf{n}}{\textbf{m}} \left (b_{\textbf{m}+\textbf{n}}+b_{\textbf{n}}\right)=0,\label{eq:0.7.}\end{equation}
\begin{equation}\mathrm{det}\binom{\textbf{n}}{\textbf{m}}\left (b_{\textbf{m}}-b_{\textbf{n}}\right)=0.\label{eq:0.8.}\end{equation}
Taking $\textbf{m}=\textbf{e}_{2} $, $\textbf{n}=\textbf{e}_{1}$ 
in identity \eqref{eq:0.8.}, we have
$b_{\textbf{e}_{1}}=b_{\textbf{e}_{2}}.$

Taking $\textbf{n}=\textbf{e}_{1}$ in identity \eqref{eq:0.8.}, we get
$$b_{\textbf{m}}=b_{\textbf{e}_{1}},~\forall~\textbf{m}\in\mathbb{Z}\times\mathbb{Z}^{*}. $$
Taking $\textbf{n}=\textbf{e}_{2}$ in identity \eqref{eq:0.8.}, we get
$$b_{\textbf{m}}=b_{\textbf{e}_{2}},~\forall~\textbf{m}\in\mathbb{Z}^{*}\times\mathbb{Z}. $$
Thus we proved that for all $\textbf{m}\in\mathbb{Z}^{2}\setminus\left\{{\bf 0}\right\}$, $b_{\textbf{m}}$ equals to a constant, by denoting this constant as $b$, then by \eqref{eq:0.7.}, we get 
\begin{equation*}2\mathrm{det}\binom{\textbf{n}}{\textbf{m}} b=0.\end{equation*}
Then we have
$$b=0.$$

\textbf{Case 2.} $\textbf{i}=(i_{1},i_{2})\in \left\{0\right\} \times \mathbb{Z}^{*}$ or $\mathbb{Z}^{*}\times\left\{0\right\} $. 

Without loss of generality, we suppose $\textbf{i}\in \left\{0\right\} \times \mathbb{Z}^{*}$. 

By identity \eqref{eq:0.4.}, we have $\forall~ \textbf{m},\textbf{n}\in\mathbb{Z}^{2}\setminus\left\{\bf 0\right\},$
\begin{equation}\label{eq:9.20.}
\mathrm{det}\binom{\textbf{n}}{\textbf{m}}a_{\textbf{m}+\textbf{n}}=-\left(\left(m_{2}+i_{2}\right)n_{1}-m_{1}n_{2}\right)a_{\textbf{m}}-
\left(m_{2}n_{1}-m_{1}\left(n_{2}+i_{2}\right)\right)a_{\textbf{n}}.\end{equation}
Taking $\textbf{n}= \textbf{e}_{1}$ and $\textbf{e}_{2}$ in identity \eqref{eq:9.20.}, we get
\begin{equation}
m_{2}a_{\textbf{m}+\textbf{e}_{1}}=-\left(m_{2}+i_{2}\right)a_{\textbf{m}}-\left(m_{2}-m_{1}i_{2}\right)a_{\textbf{e}_{1}},
~\forall~\textbf{m}\in\mathbb{Z}^{2}\setminus\left\{\bf 0\right\},\label{eq:9.21.}\end{equation}
\begin{equation}-m_{1}a_{\textbf{m}+\textbf{e}_{2}}=m_{1}a_{\textbf{m}}+m_{1}\left(1+i_{2}\right)a_{\textbf{e}_{2}},~\forall~  \textbf{m}\in\mathbb{Z}^{2}\setminus\left\{\bf 0\right\}.\label{eq:9.22.}\end{equation}
By taking $m_{2}= 0$ in identity \eqref{eq:9.21.}, we obtain
\begin{equation}a_{(m_{1},0)}=m_{1}a_{\textbf{e}_{1}},~~\forall  ~m_{1}\in\mathbb{Z}^{*}.\label{eq:9.24.}\end{equation}
By identity \eqref{eq:9.22.}, we have
\begin{equation}\label{eq:4.11}
a_{(m_{1},m_{2}+1)}=-a_{\textbf{m}}-\left(1+i_{2}\right)a_{\textbf{e}_{2}},~\forall~\textbf{m}\in\mathbb{Z}^{*}\times\mathbb{Z}.\end{equation}
Fix $m_1\in\mathbb Z^*,$ and treat $\left(a_{\textbf{m}}+\frac{1}{2}\left(1+i_{2}\right)a_{\textbf{e}_{2}}\right)_{m_{2}\in\mathbb{Z}}$ as a geometric sequence, then by identity \eqref{eq:4.11}, we have
\begin{equation*}a_{\textbf{m}}=\left( a_{(m_{1},0)}+\frac{1}{2}\left(1+i_{2}\right)a_{\textbf{e}_{2}}\right)\left(-1\right)^{m_{2}}-\frac{1}{2}\left(1+i_{2}\right)a_{\textbf{e}_{2}},
~\forall ~\textbf{m}\in\mathbb{Z}^{*}\times\mathbb{Z}.\label{eq:9.25.} \end{equation*}
Substituting identity \eqref{eq:9.24.} into the identity  above, we get
\begin{equation}a_{\textbf{m}}=m_{1}\left(-1\right)^{m_{2}}a_{\textbf{e}_{1}}+\frac{1}{2}\left(1+i_{2}\right)\left(\left(-1\right)^{m_{2}}-1\right)a_{\textbf{e}_{2}},
~\forall~  \textbf{m}\in\mathbb{Z}^{*}\times\mathbb{Z}.
\label{eq:9.26.}\end{equation}
Substituting identity \eqref{eq:9.26.} into identity \eqref{eq:9.20.}, for those $\textbf{m},\textbf{n}\in\mathbb{Z}^{*}\times\mathbb{Z}$ such that $\textbf{m}+\textbf{n}\in\mathbb{Z}^{*}\times\mathbb{Z}$,  we have
\begin{equation}\begin{split}\mathrm{det}\binom{\textbf{n}}{\textbf{m}}\left(\left(m_{1}+n_{1}\right)\left(-1\right)^{m_{2}+n_{2}} a_{\textbf{e}_{1}}+\frac{1}{2}
\left(1+i_{2}\right)\left(\left(-1\right)^{m_{2}+n_{2}}-1\right) a_{\textbf{e}_{2}}\right)\\=
\left(\left(m_{2}+i_{2}\right)n_{1}-m_{1}n_{2}\right)\left(m_{1} \left(-1\right)^{m_{2}}a_{\textbf{e}_{1}}+\frac{1}{2}\left(1+i_{2}\right)\left(\left(-1\right)^{m_{2}}-1\right) a_{\textbf{e}_{2}}\right)
\\+\left(m_{2}n_{1}-m_{1}\left(n_{2}+i_{2}\right)\right)\left(n_{1}\left(-1\right)^{n_{2}}a_{\textbf{e}_{1}}+\frac{1}{2}\left(1+i_{2}\right)\left(\left(-1\right)^{n_{2}}-1\right) a_{\textbf{e}_{2}}\right).\end{split}\label{eq:9.27.}\end{equation}

Now we need consider the two subcases:：$i_{2}= -1$ and $i_{2}\ne -1$.

\textbf{Subcase (i).} $i_{2}= -1$. Taking $\textbf{m}=(2,2)$, $\textbf{n}=(1,1)$ in identity \eqref{eq:9.27.}, we get
$a_{\textbf{e}_{1}}=0.$
Substituting $i_{2}= -1$ and $a_{\textbf{e}_{1}}=0$ into identity \eqref{eq:9.26.}, we get
$$a_{\textbf{m}}=0,~\forall~\textbf{m}\in\mathbb{Z}^{*}\times\mathbb{Z}.$$


\textbf{Subcase (ii).} $i_{2}\not= -1$. Taking $\textbf{m}=(1,1)$, $\textbf{n}=(2,2)$ and $\textbf{m}=(1,1)$, $\textbf{n}=(3, 3)$ in identity \eqref{eq:9.27.}, we get
$$\left\{\begin{matrix}
-2i_2(2a_{\textbf{e}_{1}}+\left(1+i_{2}\right)a_{\textbf{e}_{2}})=0,
 \\-2i_2\left(1+i_{2}\right)a_{\textbf{e}_{2}}=0.
\end{matrix}\right.$$
Since $i_2\neq 0$ and $1+i_2\neq 0$,  it follows that 
\begin{equation*}a_{\textbf{e}_{1}}=a_{\textbf{e}_{2}}=0.\label{eq:9.93.}\end{equation*}
Taking $a_{\textbf{e}_{1}}=0$ and $a_{\textbf{e}_{2}}=0$ in \eqref{eq:9.26.}, we get
$$a_{\textbf{m}}=0,~\forall~\textbf{m}\in\mathbb{Z}^{*}\times\mathbb{Z}.$$
Thus, in both of the two subcases, we proved $a_{\textbf{m}}=0,~\forall~\textbf{m}\in\mathbb{Z}^{*}\times\mathbb{Z}.$

Substituting $m_{1} = 1$, $\textbf{n}=(-1,0)$ in identity \eqref{eq:9.20.}, we get
$$a_{(0, m_2)}=0, ~\forall~ m_2\in\mathbb Z^{*}.$$

In summary, $$a_{\textbf{m}}=0,~\forall~\textbf{m}\in\mathbb{Z}^{2}\setminus\left\{\bf 0\right\}.$$
Thus, identities \eqref{eq:0.5.} and \eqref{eq:0.6.} transform into the following forms:
\begin{equation*}\label{eq:0.13.}
\mathrm{det}\binom{\textbf{n}}{\textbf{m}}b_{\textbf{m}+\textbf{n}}=-\left(m_{2}n_{1}-m_{1}\left(n_{2}+i_{2}\right)\right)b_{\textbf{n}},
~\forall~\textbf{m},\textbf{n}\in\mathbb{Z}^{2}\setminus\left\{\bf 0\right\}\end{equation*}
\begin{equation*}\label{eq:0.15.}
\left(m_{2}n_{1}-m_{1}\left(n_{2}+i_{2}\right)\right)b_{\textbf{n}}=\left(\left(m_{2}+i_{2}\right)n_{1}-m_{1}n_{2}\right)b_{\textbf{m}},
~\forall~\textbf{m},\textbf{n}\in\mathbb{Z}^{2}\setminus\left\{\bf 0\right\}.\end{equation*}
Subtracting the two identities above yields $\forall~\textbf{m},\textbf{n}\in\mathbb{Z}^{2}\setminus\left\{\bf 0\right\},$
\begin{equation}\label{eq:0.16.}
2\mathrm{det}\binom{\textbf{n}}{\textbf{m}}b_{\textbf{m}+\textbf{n}}=-
\left(\left(m_{2}+i_{2}\right)n_{1}-m_{1}n_{2}\right)b_{\textbf{m}}-\left(m_{2}n_{1}-m_{1}\left(n_{2}+i_{2}\right)\right)b_{\textbf{n}},
 \end{equation}
Taking $\textbf{n}= \textbf{e}_{1}$ and $\textbf{e}_{2}$ in identity \eqref{eq:0.16.}, respectively,  we get
\begin{equation}\label{eq:0.17.}
2m_{2}b_{\textbf{m}+\textbf{e}_{1}}=-\left(m_{2}+i_{2}\right)b_{\textbf{m}}-\left(m_{2}-m_{1}i_{2}\right)b_{\textbf{e}_{1}},
~\forall ~\textbf{m}\in\mathbb{Z}\times\mathbb{Z}^{*},\end{equation}
\begin{equation}\label{eq:0.18.}
2m_{1}b_{\textbf{m}+\textbf{e}_{2}}=-m_{1}b_{\textbf{m}}-m_{1}\left(1+i_{2}\right)b_{\textbf{e}_{2}},
~\forall ~\textbf{m}\in\mathbb{Z}^{*}\times\mathbb{Z}.\end{equation}
Taking $m_{2}= 0$ in identity \eqref{eq:0.17.}, we get
\begin{equation}b_{(m_{1},0)}=m_{1}b_{\textbf{e}_{1}},~\forall  ~m_{1}\in\mathbb{Z}^{*}.\label{eq:0.19.}\end{equation}
 Fix $m_1\in\mathbb Z^*$ and treat $\left(b_{\textbf{m}}+\left(\frac{1+i_{2}}{3}\right)b_{\textbf{e}_{2}}\right)_{m_{2}\in\mathbb{Z}}$ as a geometric sequence,  then by identity \eqref{eq:0.18.} we have
\begin{equation*}b_{\textbf{m}}=\left( b_{(m_{1},0)}+\left(\frac{1+i_{2}}{3}\right)b_{\textbf{e}_{2}}\right)\left(-\frac{1}{2}\right)^{m_{2}}-\left(\frac{1+i_{2}}{3}\right)b_{\textbf{e}_{2}},
~\forall ~\textbf{m}\in\mathbb{Z}^{*}\times\mathbb{Z}.\label{eq:0.21.} \end{equation*}
Substituting identity \eqref{eq:0.19.} into the identity above, we obtain
\begin{equation}\label{eq:0.21.}
b_{\textbf{m}}=m_{1}\left(-\frac{1}{2}\right)^{m_{2}}b_{\textbf{e}_{1}}+\left(\frac{1+i_{2}}{3}\right)\left(\left(-\frac{1}{2}\right)^{m_{2}}-1\right)b_{\textbf{e}_{2}},
~\forall ~ \textbf{m}\in\mathbb{Z}^{*}\times\mathbb{Z}.
\end{equation}
Substituting identity \eqref{eq:0.21.} into identity \eqref{eq:0.16.}, for those $\textbf{m},\textbf{n}\in\mathbb{Z}^{*}\times\mathbb{Z}$, such that $\textbf{m}+\textbf{n}\in\mathbb{Z}^{*}\times\mathbb{Z}$ ,  we get
\begin{equation}\begin{split}2\mathrm{det}\binom{\textbf{n}}{\textbf{m}}\left(\left(m_{1}+n_{1}\right)\left(-\frac{1}{2}\right)^{m_{2}+n_{2}} b_{\textbf{e}_{1}}+
\left(\frac{1+i_{2}}{3}\right)\left(\left(-\frac{1}{2}\right)^{m_{2}+n_{2}}-1\right) b_{\textbf{e}_{2}}\right)\\
=-
\left(\left(m_{2}+i_{2}\right)n_{1}-m_{1}n_{2}\right)\left(m_{1} \left(-\frac{1}{2}\right)^{m_{2}}b_{\textbf{e}_{1}}+\left(\frac{1+i_{2}}{3}\right)\left(\left(-\frac{1}{2}\right)^{m_{2}}-1\right) b_{\textbf{e}_{2}}\right)
\\-\left(m_{2}n_{1}-m_{1}\left(n_{2}+i_{2}\right)\right)\left(n_{1}\left(-\frac{1}{2}\right)^{n_{2}}b_{\textbf{e}_{1}}+\left(\frac{1+i_{2}}{3}\right)\left(\left(-\frac{1}{2}\right)^{n_{2}}-1\right) b_{\textbf{e}_{2}}\right).\end{split}\label{eq:0.22.}\end{equation}

Now we need to consider the two subcases: $i_{2}= -1$ and $i_{2}\ne -1$.

\textbf{Subcase (i).} $i_{2}= -1$. Taking $\textbf{m}=(2,2)$, $\textbf{n}=(1,1)$ in identity \eqref{eq:0.22.}, we have
$b_{\textbf{e}_{1}}=0.$
Substituting $i_{2}= -1$ and $b_{\textbf{e}_{1}}=0$ into identity \eqref{eq:0.21.}, we get
$$b_{\textbf{m}}=0,~\forall~\textbf{m}\in\mathbb{Z}^{*}\times\mathbb{Z}.$$

\textbf{Subcase (ii).} $i_{2}\not= -1$. Taking $\textbf{m}=(1,1)$, $\textbf{n}=(-2,-2)$ and $\textbf{m}=(1,1)$, $\textbf{n}=(2,2)$ in identity \eqref{eq:0.22.}, respectively, we obtain
$$\left\{\begin{matrix}
27b_{\textbf{e}_{1}}+8\left(1+i_{2}\right)b_{\textbf{e}_{2}}=0,
 \\2b_{\textbf{e}_{1}}+\left(1+i_{2}\right)b_{\textbf{e}_{2}}=0.
\end{matrix}\right.$$
Solving the system of linear equations yields
\begin{equation*}b_{\textbf{e}_{1}}=b_{\textbf{e}_{2}}=0.\end{equation*}
Substituting $b_{\textbf{e}_{1}}=0$ and $b_{\textbf{e}_{2}}=0$ into identity \eqref{eq:0.21.}, we have
$$b_{\textbf{m}}=0,~\forall~\textbf{m}\in\mathbb{Z}^{*}\times\mathbb{Z}.$$
Thus, in both of the two subcases we proved
$$b_{\textbf{m}}=0,~\forall~\textbf{m}\in\mathbb{Z}^{*}\times\mathbb{Z}.$$

Taking $n_{1} = -1$, $\textbf{m}=(1,0)$ in identity \eqref{eq:0.16.}, we get
$$b_{(0,n_{2})}=0,~\forall  ~n_{2}\in\mathbb{Z}^{*}.$$
In summary, we proved
$$b_{\textbf{m}}=0,~\forall~\textbf{m}\in\mathbb{Z}^{2}\setminus\left\{\bf 0\right\}.$$

\textbf{Case 3.} $\textbf{i}\in\mathbb{Z}^{*}\times\mathbb{Z}^{*}$. 

On the one hand,  taking $\textbf{n}=(1,0)$, $m_{2}= 0$ in identity \eqref{eq:0.4.}, we get
\begin{equation}a_{(m_{1},0)}=m_{1}a_{\textbf{e}_{1}}, ~\forall  ~m_{1}\in\mathbb{Z}^{*}.\label{eq:9.33.}\end{equation}
On the other hand, taking $\textbf{n}=(0,1)$, $m_{1}= 0$ in \eqref{eq:0.4.}, we have
\begin{equation}a_{(0,m_{2})}=m_{2}a_{\textbf{e}_{2}},~\forall  ~m_{2}\in\mathbb{Z}^{*}.\label{eq:9.34.}\end{equation}
Taking $n_{1} = 0$ and $m_{2} = 0$ in identity \eqref{eq:0.4.}, we have
\begin{equation}m_{1}n_{2}a_{(m_{1},n_{2})}=-n_{2}\left(m_{1}+i_{1}\right)a_{(m_{1},0)}-m_{1} \left(n_{2}+i_{2} \right)a_{(0,n_{2})},~\forall  ~m_{1},n_{2}\in\mathbb{Z}^{*}.\label{eq:9.35.}\end{equation}
Substituting identities \eqref{eq:9.33.} and\eqref{eq:9.34.} into identity \eqref{eq:9.35.}, we get
\begin{equation}m_{1}n_{2}a_{(m_{1},n_{2})}=-n_{2}\left(m_{1}+i_{1}\right)m_{1}a_{\textbf{e}_{1}}-m_{1}\left(n+i_{2}\right)n_{2}a_{\textbf{e}_{2}},~\forall  ~m_{1},n_{2}\in\mathbb{Z}^{*}.\end{equation}
Then
\begin{equation}a_{(m_{1},n_{2})}=-\left(m_{1}+i_{1})\right)a_{\textbf{e}_{1}}-\left(n_{2}+i_{2}\right)a_{\textbf{e}_{2}},~\forall  ~m_{1},n_{2}\in\mathbb{Z}^{*}.\label{eq:9.36.}\end{equation}
ubstituting \eqref{eq:9.36.} into \eqref{eq:0.4.}, for those $\textbf{m},\textbf{n}\in\mathbb{Z}^{*}\times\mathbb{Z}^{*}$ such that ${\bf m+n}\in\mathbb Z^*\times\mathbb Z^*$, we get
\begin{eqnarray*}
&&\mathrm{det}\binom{\textbf{n}}{\textbf{m}}\left(\left(m_{1}+n_{1}+i_{1}\right) a_{\textbf{e}_{1}}+\left(m_{2}+n_{2}+i_{2}\right)a_{\textbf{e}_{2}}\right)
\\
&=&-\mathrm{det}\binom{\textbf{n}}{\textbf{m}+\textbf{i}}\left(\left(m_{1}+i_{1}\right)a_{\textbf{e}_{1}}+\left(m_{2}+i_{2}\right)a_{\textbf{e}_{2}}\right)\\
&&-\mathrm{det}\binom{\textbf{n}+\textbf{i}}{\textbf{m}}\left(\left(n_{1}+i_{1}\right)a_{\textbf{e}_{1}}+\left(n_{2}+i_{2}\right)a_{\textbf{e}_{2}}\right).\end{eqnarray*}
Particularly, taking $\textbf{m}=(-2i_{1},i_{2})$, $\textbf{n}=(-i_{1},i_{2})$ and $\textbf{m}=(i_{1},-i_{2})$, $\textbf{n}=(-i_{1},i_{2})$, respectively, we get
$$\left\{\begin{matrix}
i_{1}a_{\textbf{e}_{1}}-9i_{2}a_{\textbf{e}_{2}}&=0, \\
i_{1}a_{\textbf{e}_{1}}+i_{2}a_{\textbf{e}_{2}}&=0.
\end{matrix}\right.$$
Hence, it immediately follows that
$$a_{\textbf{e}_{1}}=a_{\textbf{e}_{2}}=0.$$
Substituting $a_{\textbf{e}_{1}}=0$ and $a_{\textbf{e}_{2}}=0$ into identity \eqref{eq:9.36.}, we get
\begin{equation*}a_{\textbf{m}}=0,~\forall~ \textbf{m}\in\mathbb{Z}^{*}\times \mathbb{Z}^{*}.\end{equation*}
Substituting $a_{\textbf{e}_{1}}=0$ into identity \eqref{eq:9.33.}, we get
\begin{equation*}a_{(m_{1},0)}=0,~\forall~ m_{1}\in\mathbb{Z}^*.\end{equation*}
Substituting $a_{\textbf{e}_{2}}=0$ into identity \eqref{eq:9.34.}, we get
\begin{equation*}a_{(0,m_{2})}=0,~\forall ~m_{2}\in\mathbb{Z}^*.\end{equation*}
In summary, we proved
$$a_{\textbf{m}}=0,~\forall~\textbf{m}\in\mathbb{Z}^{2}\setminus\left\{\bf 0\right\}.$$
Then identities \eqref{eq:0.5.} and \eqref{eq:0.6.} transform into the following forms:
\begin{equation*}\label{eq:0.24.}
\mathrm{det}\binom{\textbf{n}}{\textbf{m}}b_{\textbf{m}+\textbf{n}}=-\mathrm{det}\binom{\textbf{n}+\textbf{i}}{\textbf{m}}b_{\textbf{n}},
~\forall~\textbf{m},\textbf{n}\in\mathbb{Z}^{2}\setminus\left\{\bf 0\right\}\end{equation*}
\begin{equation*}\label{eq:0.26.}
\mathrm{det}\binom{\textbf{n}+\textbf{i}}{\textbf{m}}b_{\textbf{n}}=\mathrm{det}\binom{\textbf{n}}{\textbf{m}+\textbf{i}}b_{\textbf{m}},
~\forall~\textbf{m},\textbf{n}\in\mathbb{Z}^{2}\setminus\left\{\bf 0\right\}.\end{equation*}
Subtracting the two identities above yields
\begin{equation}\label{eq:0.27.}
2\mathrm{det}\binom{\textbf{n}}{\textbf{m}} b_{\textbf{m}+\textbf{n}}=-\mathrm{det}\binom{\textbf{n}}{\textbf{m}+\textbf{i}}b_{\textbf{m}}-
\mathrm{det}\binom{\textbf{n}+\textbf{i}}{\textbf{m}}b_{\textbf{n}},
~\forall~\textbf{m},\textbf{n}\in\mathbb{Z}^{2}\setminus\left\{\bf 0\right\}.\end{equation}
On the one hand, taking $\textbf{n}=(1,0)$, $m_{2}= 0$ in identity \eqref{eq:0.27.}, we get  
\begin{equation}\label{eq:0.28.}
b_{(m_{1},0)}=m_{1}b_{\textbf{e}_{1}}, ~\forall  ~m_{1}\in\mathbb{Z}^{*}.\end{equation}
On the other hand,  taking $\textbf{n}=(0,1)$, $m_{1}= 0$ in identity \eqref{eq:0.27.}, we have  
\begin{equation}\label{eq:0.29.}
b_{(0,m_{2})}=m_{2}b_{\textbf{e}_{2}},~\forall  ~m_{2}\in\mathbb{Z}^{*}.\end{equation}
Taking $n_{1} = 0$, $m_{2} = 0$ in identity \eqref{eq:0.27.} 
\begin{equation}\label{eq:0.30.}
2m_{1}n_{2}b_{(m_{1},n_{2})}=-n_{2}\left(m_{1}+i_{1}\right)b_{(m_{1},0)}-m_{1} \left(n_{2}+i_{2} \right)b_{(0,n_{2})},~\forall  ~m_{1},n_{2}\in\mathbb{Z}^{*}.\end{equation}
Substituting identities \eqref{eq:0.28.} and \eqref{eq:0.29.} into identity \eqref{eq:0.30.}, we get
\begin{equation*}
2m_{1}n_{2}b_{(m_{1},n_{2})}=-n_{2}\left(m_{1}+i_{1}\right)m_{1}b_{\textbf{e}_{1}}-m_{1}\left(n_2+i_{2}\right)n_{2}b_{\textbf{e}_{2}},~\forall  ~m_{1},n_{2}\in\mathbb{Z}^{*}.\end{equation*}
Then
\begin{equation}\label{eq:0.32.}
2b_{(m_{1},n_{2})}=-\left(m_{1}+i_{1}\right)b_{\textbf{e}_{1}}-\left(n_{2}+i_{2}\right)b_{\textbf{e}_{2}},~\forall  ~m_{1},n_{2}\in\mathbb{Z}^{*}.\end{equation}
By substituting identity \eqref{eq:0.32.} into identity \eqref{eq:0.27.}, for those $\textbf{m},\textbf{n}\in\mathbb{Z}^{*}\times\mathbb{Z}^{*}$ such that ${\bf m+n}\in \mathbb{Z}^{*}\times\mathbb{Z}^{*}$, we get
\begin{eqnarray*}
&&2\mathrm{det}\binom{\textbf{n}}{\textbf{m}}\left(\left(m_{1}+n_{1}+i_{1}\right) b_{\textbf{e}_{1}}+\left(m_{2}+n_{2}+i_{2}\right)b_{\textbf{e}_{2}}\right)
\\
&=&-\mathrm{det}\binom{\textbf{n}}{\textbf{m}+\textbf{i}}\left(\left(m_{1}+i_{1}\right)b_{\textbf{e}_{1}}+\left(m_{2}+i_{2}\right)b_{\textbf{e}_{2}}\right)
-\mathrm{det}\binom{\textbf{n}+\textbf{i}}{\textbf{m}}\left(\left(n_{1}+i_{1}\right)b_{\textbf{e}_{1}}+\left(n_{2}+i_{2}\right)b_{\textbf{e}_{2}}\right).\end{eqnarray*}
Particularly, taking $\textbf{m}=(-i_{1},i_{2})$, $\textbf{n}=(i_{1},-i_{2})$ and $\textbf{m}=(-i_{1},2i_{2})$, $\textbf{n}=(i_{1},-i_{2})$, respectively, we have
$$\left\{\begin{matrix}
i_{1}b_{\textbf{e}_{1}}+i_{2}b_{\textbf{e}_{2}}=0, \\
10i_{1}b_{\textbf{e}_{1}}+13i_{2}b_{\textbf{e}_{2}}=0.
\end{matrix}\right.$$
Solving the system of linear equations above yields
$$b_{\textbf{e}_{1}}=b_{\textbf{e}_{2}}=0.$$
Substituting $b_{\textbf{e}_{1}}=0$ and $b_{\textbf{e}_{2}}=0$ into identity \eqref{eq:0.32.}, we obtain
\begin{equation*}b_{\textbf{m}}=0,~\forall~ \textbf{m}\in\mathbb{Z}^{*}\times \mathbb{Z}^{*}.\end{equation*}
Substituting $b_{\textbf{e}_{1}}=0$ into identity \eqref{eq:0.28.}, we get
\begin{equation*}b_{(m_{1},0)}=0,~\forall~ m_{1}\in\mathbb{Z}^{*}.\end{equation*}
Substituting $b_{\textbf{e}_{2}}=0$ into identity \eqref{eq:0.29.}, we get
\begin{equation*}b_{(0,m_{2})}=0,~\forall~ m_{2}\in\mathbb{Z}^{*}.\end{equation*}
In summary, we proved
$$b_{\textbf{m}}=0,~\forall~\textbf{m}\in\mathbb{Z}^{2}\setminus\left\{\bf 0\right\}.$$

Combining all the analyses above, we deduce that $\varphi=0$.
\end{proof}

Secondly, we determine all the even $-1$-superderivations of $L(V)$.
\begin{theorem}\label{D.}
 $\Delta _{\overline 0} \left (L(V) \right )=\{0\}.$
\end{theorem}
\begin{proof}
Let $\varphi$ be an even -1-superderivation of $L(V)$, then by identity \eqref{eq3.1}, we can write $\varphi=\sum\limits_{{\bf i}\in\mathbb Z^2}\varphi_{\bf i}$, where $\varphi_{\bf i}$ is also an even -1-superderivation of $L(V)$ for all ${\bf i}\in \mathbb Z^2.$ 
For $\textbf{i}\in \mathbb{Z}^{2}$, since $\varphi\left( L(V)_{\bar{0}}\right)\subseteq L(V)_{\bar{0}}$, $\varphi\left( L(V)_{\bar{1}}\right)\subseteq L(V)_{\bar{1}}$, we suppose 
\begin{equation*}\varphi _{\textbf{i}}\left(L_{\textbf{m}}\right)=c_{\textbf{m}} L_{\textbf{m}+\textbf{i}},\end{equation*}
\begin{equation*}\varphi _{\textbf{i}}\left(G_{\textbf{m}}\right)=d_{\textbf{m}} G_{\textbf{m}+\textbf{i}}.\end{equation*}
where $\textbf{m}\in\mathbb{Z}^{2}\setminus\left\{\bf 0\right\}$. Applying $\varphi _{\textbf{i}}$ to identities \eqref{eq1.1} and \eqref{eq1.3}, we get $\forall~ {\bf m, n}\in\mathbb Z^2\setminus\{\bf 0\},$
\begin{equation}\mathrm{det}\binom{\textbf{n}}{\textbf{m}} c_{\textbf{m}+\textbf{n}}=
-\mathrm{det}\binom{\textbf{n}}{\textbf{m}+\textbf{i}}c_{\textbf{m}}-\mathrm{det}\binom{\textbf{n}+\textbf{i}}{\textbf{m}} c_{\textbf{n}},\label{eq:0.41.8}\end{equation}
\begin{equation} c_{\textbf{m}+\textbf{n}}=
-d_{\textbf{m}}-d_{\textbf{n}},\label{eq:0.41..8}\end{equation}
To determine the cofficients, we need to consider the following cases: 

\textbf{Case 1.} $\textbf{i}={\bf 0}$. 
By identity \eqref{eq:0.41.8}, we get
\begin{equation}\mathrm{det}\binom{\textbf{n}}{\textbf{m}}\left(c_{\textbf{m+n}}+c_{\textbf{m}}+c_{\textbf{n}}\right)=0,~\forall ~ \textbf{m},\textbf{n}\in\mathbb{Z}^{2}\setminus\left\{\bf 0\right\}.\label{eq:9.98}\end{equation}
Particularly, taking $\textbf{n}= \textbf{e}_{1}$ and $\textbf{e}_{2}$ in identity \eqref{eq:9.98}, respectively, we have
\begin{equation}c_{\textbf{m}+\textbf{e}_{1}}+c_{\textbf{m}}+c_{\textbf{e}_{1}}=0,~\forall~\textbf{m}\in\mathbb{Z}\times\mathbb{Z}^{*}.\label{eq:9.118}\end{equation}
\begin{equation}c_{\textbf{m}+\textbf{e}_{2}}+c_{\textbf{m}}+c_{\textbf{e}_{2}}=0 ,~\forall~\textbf{m}\in\mathbb{Z}^{*}\times\mathbb{Z}.\label{eq:9.128}\end{equation}
 Fix $m_{2}\in\mathbb{Z}^{*}$ and treat $\left( c_{\textbf{m}}+\frac{1}{2}c_{\textbf{e}_{1}}\right)_{m_{1}\in\mathbb{Z}}$ as a geometric sequence, then by identity \eqref{eq:9.118}, we have
\begin{equation} c_{\textbf{m}}=\left( c_{(1,m_{2})}+\frac{1}{2}c_{\textbf{e}_{1}}\right)\left(-1\right)^{m_{1}-1}
-\frac{1}{2}c_{\textbf{e}_{1}},~\forall~\textbf{m}\in\mathbb{Z}\times\mathbb{Z}^{*}. \label{eq:9.138}\end{equation}
 Fix $m_{1}\in\mathbb{Z}^{*}$ and treat $\left(c_{\textbf{m}}+\frac{1}{2}c_{\textbf{e}_{2}}\right)_{m_{2}\in\mathbb{Z}}$ as a geometric sequence, then by identity \eqref{eq:9.128},we have
\begin{equation*} c_{\textbf{m}}=\left( c_{(m_{1},0)}+\frac{1}{2}c_{\textbf{e}_{2}}\right)\left(-1\right)^{m_{2}}
-\frac{1}{2}c_{\textbf{e}_{2}},~\forall~\textbf{m}\in\mathbb{Z}^{*}\times\mathbb{Z}. \label{eq:9.148}\end{equation*}
Taking $m_{1}=1$ in the above identity, we get
\begin{equation*} c_{(1,m_{2})}=\left(c_{\textbf{e}_{1}}+\frac{1}{2}c_{\textbf{e}_{2}}\right)\left(-1\right)^{m_{2}}-\frac{1}{2}c_{\textbf{e}_{2}},~\forall~m_{2}\in\mathbb{Z}.\label{eq:9.158} \end{equation*}
Substituting the above identity into identity \eqref{eq:9.138}, we have
\begin{equation}c_{\textbf{m}}=\left(\left ( \left(-1\right)^{m_{2}}+\frac{1}{2} \right )\left ( -1\right)^{m_{1}-1} -\frac{1}{2}\right)c_{\textbf{e}_{1}}+\left ( -1\right)^{m_{1}-1}\frac{1}{2}\left(\left ( -1\right)^{m_{2}}-1\right)c_{\textbf{e}_{2}},~\forall~\textbf{m}\in \mathbb{Z}\times \mathbb{Z}^{*}.\label{eq:9.188}\end{equation}
Substituting identity \eqref{eq:9.188} into \eqref{eq:9.98}, then for those $\textbf{m},\textbf{n}\in\mathbb{Z}\times\mathbb{Z}^{*}$ such that $\textbf{m}+\textbf{n}\in\mathbb{Z}\times\mathbb{Z}^{*}$, we get
\begin{eqnarray*}
\mathrm{det}\binom{\textbf{n}}{\textbf{m}}(&&\left( ( \left(-1\right)^{m_{2}+n_{2}}+\frac{1}{2} )\left ( -1\right)^{m_{1}+n_{1}-1} -\frac{1}{2}\right)c_{\textbf{e}_{1}}+\left ( -1\right)^{m_{1}+n_{1}-1}\frac{1}{2}\left(\left ( -1\right)^{m_{2}+n_{2}}-1\right)c_{\textbf{e}_{2}}\\
&&+\left( ( \left(-1\right)^{m_{2}}+\frac{1}{2}  )\left ( -1\right)^{m_{1}-1} -\frac{1}{2}\right)c_{\textbf{e}_{1}}+\left ( -1\right)^{m_{1}-1}\frac{1}{2}\left(\left ( -1\right)^{m_{2}}-1\right)c_{\textbf{e}_{2}}\\
&&+\left(( \left(-1\right)^{n_{2}}+\frac{1}{2} )\left ( -1\right)^{n_{1}-1} -\frac{1}{2}\right)c_{\textbf{e}_{1}}+\left ( -1\right)^{n_{1}-1}\frac{1}{2}\left(\left ( -1\right)^{n_{2}}-1\right)c_{\textbf{e}_{2}})=0.
\end{eqnarray*}
Particularly, taking $\textbf{m}=\left(1,3\right)$, $\textbf{n}=\left(5,7\right)$ and $\textbf{m}=\left(2,4\right)$, $\textbf{n}=\left(6,8\right)$ in the identity above, respectively, we have
$$\left\{\begin{matrix}
2c_{\textbf{e}_{1}}+c_{\textbf{e}_{2}}=0, \\
c_{\textbf{e}_{1}}=0.
\end{matrix}\right.$$
We get immediately
$c_{\textbf{e}_{1}}=c_{\textbf{e}_{2}}=0.$
Substituting $c_{\textbf{e}_{1}}=c_{\textbf{e}_{2}}=0$ into identity \eqref{eq:9.188}, we have
\begin{equation}c_{\textbf{m}}=0,~\forall~\textbf{m}\in \mathbb{Z}\times \mathbb{Z}^{*}.\end{equation}
Particularly,
$$c_{(m_{1},-1)}=c_{\textbf{e}_{2}}=0,~\forall~ m_{1}\in \mathbb{Z}.$$
Setting $m_{2}=-1$ in identity \eqref{eq:9.128} and substituting the identity above, we have
\begin{equation*}c_{(m_{1},0)}=0,~\forall~ m_{1}\in \mathbb{Z}^{*}.\end{equation*}
Then we find that for all $\textbf{m}\in\mathbb{Z}^{2}\setminus\left\{\bf 0\right\}$, $c_{\textbf{m}}=0$.
Substituting this result into identity \eqref{eq:0.41..8} and taking $\textbf{m}=\textbf{n}$, we get $d_{\textbf{n}}=0$ for all ${\bf m}\in\mathbb Z^2\setminus\{{\bf 0}\}.$

\textbf{Case 2.} $\textbf{i}\in \left\{0\right\} \times \mathbb{Z}^{*}$ or $\mathbb{Z}^{*}\times\left\{0\right\} $.
 
 Without loss of generality, we suppose $\textbf{i}\in \left\{0\right\} \times \mathbb{Z}^{*}$.
 
 By identity \eqref{eq:0.41.8} and the same arguments as in \textbf{Case 2.} of Theorem \ref{A.}, we know that $c_{\textbf{m}}=0$ for all ${\bf m}\in\mathbb Z^2\setminus\{{\bf 0}\}$. Substituting this result into identity \eqref{eq:0.41..8} and taking $\textbf{m}=\textbf{n}$, we get $d_{\textbf{n}}=0$ for all ${\bf n}\in\mathbb Z^2\setminus\{{\bf 0}\}$.

\textbf{Case 3.} $\textbf{i}\in\mathbb{Z}^{*}\times\mathbb{Z}^{*}$. 

By identity \eqref{eq:0.41.8} and the same arguments as in \textbf{Case 3.} of Theorem \ref{A.}, we get $c_{\textbf{m}}=0$ for all ${\bf m}\in\mathbb Z^2\setminus\{{\bf 0}\}$. Substituting this result into identity \eqref{eq:0.41..8} and taking $\textbf{m}=\textbf{n}$,  we get $d_{\textbf{n}}=0$ for all ${\bf n}\in\mathbb Z^2\setminus\{{\bf 0}\}.$ 

In summary, we have proven that $\varphi=0$.
\end{proof}

\begin{theorem}
   The Lie superalgebra $L(V)$ admits no non-trivial transposed $-1$-Poisson superalgebra structures. 
\end{theorem}

\subsection{Transposed $1$-Poisson superalgebra structure on $L(V)$}
For $\delta=1$, a $\delta$-superderivation of $L(V)$ is exactly a superderivation of $L(V)$.

\begin{lemma}
    The superderivaiton algebra ${\rm Der}(L(V))$ is $\mathbb Z^2$-graded.
\end{lemma}
\begin{proof}
    $L(V)$ is a finitely generated Lie superalgebra and has a natural  $\mathbb Z^2$-grading. Utilizing Lemma \ref{C},  ${\rm Der}(L(V))$ must be $\mathbb Z^2$-graded.
\end{proof}
Firstly, we determine all the odd superderivatioins of $L(V)$. 
\begin{lemma}\label{lem4.3}
    ${\rm Der(L(V))}_{\overline 1}={\rm ad}(L(V)_{\overline 1}).$
\end{lemma}
\begin{proof}
    Assuming $D_{\bf i}\in {\rm Der}(L(V))_{\overline 1}\cap {\rm Der}(L(V))_{\bf i}$ for some $\bf i\in \mathbb Z^2,$ we have 
    $$D_{\bf i}(L_{\bf m})=a_{\bf m}^{\bf i}G_{\bf m+i},$$
    $$D_{\bf i}(G_{\bf m})=b_{\bf m}^{\bf i}L_{\bf m+i}$$
    for all ${\bf m}\in\mathbb Z^2,$ where $a_{\bf m}^{\bf i}, b_{\bf m}^{\bf i}\in \mathbb C.$

    Given that
 \begin{center}
 {$D_{\bf i}([L_{\bf m}, L_{\bf n}])=[D_{\bf i}(L_{\bf m}), L_{\bf n}]+[L_{\bf m}, D_{\bf i}(L_{\bf n})]$~ for ${\bf m, n}\in \mathbb Z^2\setminus\{\bf 0\},$}
 \end{center}
    we obtain 
    \begin{equation}\label{eq4.2}
       \det\binom{\bf n}{\bf m}a_{\bf m+n}^{\bf i}=\det\binom{\bf n}{\bf m+i}a_{\bf m}^{\bf i}+\det\binom{\bf n+i}{\bf m}a_{\bf n}^{\bf i}.
    \end{equation}
Since \begin{center}
    $D_{\bf i}([L_{\bf m}, G_{\bf n}])=[D_{\bf i}(L_{\bf m}), G_{\bf n}]+[L_{\bf m}, D_{\bf i}(G_{\bf n})]$~ for  ${\bf m, n}\in \mathbb Z^2\setminus\{\bf 0\},$
\end{center}
we have  \begin{equation}\label{eq4.3}
       \det\binom{\bf n}{\bf m}b_{\bf m+n}^{\bf i}=a_{\bf m}^{\bf i}+\det\binom{\bf n+i}{\bf m}b_{\bf n}^{\bf i}.
    \end{equation}
Considering that 
\begin{center}
 {$D_{\bf i}([G_{\bf m}, G_{\bf n}])=[D_{\bf i}(G_{\bf m}), G_{\bf n}]-[G_{\bf m}, D_{\bf i}(G_{\bf n})]$~ for ${\bf m, n}\in \mathbb Z^2\setminus\{\bf 0\},$}
 \end{center}
 we deduce 
 \begin{equation}\label{eq4.4}
       a_{\bf m+n}^{\bf i}=\det\binom{\bf n}{\bf m+i}b_{\bf m}^{\bf i}-\det\binom{\bf n+i}{\bf m}b_{\bf n}^{\bf i}.
    \end{equation}
{\bf Case 1.} ${\bf i=0}.$ 

 By a similar arguments as for Case 1 in Therorem \ref{A}, we get $$a_{\bf m}^{\bf 0}=b_{\bf m}^{\bf 0}=0, ~\forall~ {\bf m}\in \mathbb Z^2\setminus\{0\}.$$
 Therefore, $D_{\bf 0}(L_{\bf m})=D_{\bf 0}(G_{\bf m})=0, ~\forall {\bf m}\in\mathbb Z^2\setminus\{{\bf 0}\}.$\\
 {\bf Case 2.} ${\bf i}\in \{0\}\times \mathbb Z^*$ or $\mathbb Z^*\times \{0\}.$ Without loss of genrality, we assume ${\bf i}\in \{0\}\times \mathbb Z^*.$

Taking ${\bf n=e_1, e_2}$ in identity \eqref{eq4.2} respecitively, we obtain 
\begin{equation}\label{eq4.5}
    m_2a_{\bf m+e_1}^{\bf i}=(m_2+i_2)a_{\bf m}^{\bf i}+(m_2-m_1i_2)a_{\bf e_1}^{\bf i}, ~\forall ~ {\bf m}\in \mathbb Z^2\setminus\{0\}
\end{equation}
\begin{equation}\label{eq4.6}
    a_{\bf m+e_2}^{\bf i}=a_{\bf m}^{\bf i}+(1+i_2)a_{\bf e_2}^{\bf i}, ~\forall ~ {\bf m}\in \mathbb Z^*\times \mathbb Z
\end{equation}
Setting $m_2=0$ in \eqref{eq4.5}, we get $i_2(a_{(m_1, 0)}^{\bf i}-m_1a_{\bf e_1}^{\bf i})=0, ~\forall ~m_1\in\mathbb Z^*,$ it follows that 
\begin{equation}\label{eq4.7}
    a_{(m_1, 0)}^{\bf i}=m_1a_{\bf e_1}^{\bf i}, ~\forall ~ m_1\in \mathbb Z^*
\end{equation}
By \eqref{eq4.6} we see that for a fixed $m_1\in\mathbb Z^*,$ $(a_{(m_1, m_2)})_{m_2\in \mathbb Z}$ is an arithmetic sequence with common difference $(1+i_2)a_{\bf e_2}^{\bf i}$,
so $a_{\bf m}^{\bf i}=a_{(m_1, 0)}^{\bf i}+m_2(1+i_2)a_{\bf e_2}^{\bf i}, ~\forall ~{\bf m}\in \mathbb Z^*\times \mathbb Z.$ Substituting \eqref{eq4.7}, we get 
\begin{equation}\label{eq4.8}
    a_{\bf m}^{\bf i}=m_1a_{\bf e_1}^{\bf i}+m_2(1+i_2)a_{\bf e_2}, ~\forall ~{\bf m}\in \mathbb Z^*\times \mathbb Z.
\end{equation}
{\bf subcase 1.} $i_2=-1.$

By \eqref{eq4.8}, we get $a_{\bf m}^{\bf i}=m_1a_{\bf e_1}^{\bf i},  ~\forall ~{\bf m}\in \mathbb Z^*\times \mathbb Z.$\\
{\bf subcase 2.} $i_2\neq -1.$

Substituting  \eqref{eq4.8} into \eqref{eq4.2}, then for all ${\bf m, n}\in\mathbb Z^*\times \mathbb Z$ such that ${\bf m+n}\in \mathbb Z^*\times \mathbb Z$, we have
\begin{eqnarray*}
    &&\det\binom{\bf n}{\bf m}((m_1+n_1)a_{\bf e_1}^{\bf i}+(m_2+n_2)(1+i_2)a_{\bf e_2}^{\bf i})\\
    &=&(\det\binom{\bf n}{\bf m}+i_2n_1)(m_1a_{\bf e_1}^{\bf i}+m_2(1+i_2)a_{\bf e_2}^{\bf i})+(\det\binom{\bf n}{\bf m}-m_1i_2)(n_1a_{\bf e_1}^{\bf i}+n_2(1+i_2)a_{\bf e_2}^{\bf i})
\end{eqnarray*}
Simplifying yields $i_2\det\binom{\bf n}{\bf m}(1+i_2)a_{\bf e_2}^{\bf i}=0.$ Since $i_2\neq 0$ or $-1$, we get $a_{\bf e_2}^{\bf i}=0.$  Then by \eqref{eq4.8}, we obtain $$a_{\bf m}^{\bf i}=m_1a_{\bf e_1}^{\bf i},  ~\forall ~{\bf m}\in \mathbb Z^*\times \mathbb Z.$$
Thus in both of the two subcases, we have $a_{\bf m}^{\bf i}=m_1a_{\bf e_1}^{\bf i},  ~\forall ~{\bf m}\in \mathbb Z^*\times \mathbb Z.$ 

Taking $m_1=1$ and ${\bf n}=(-1, 0)$ in identity \eqref{eq4.2}, we have $m_2a_{(0, m_2)}^{\bf i}=0$, ~$\forall~ m_2\in\mathbb Z^*.$ So $$a_{(0, m_2)}^{\bf i}=0,  ~\forall ~m_2\in \mathbb Z^*.$$
In summary,  \begin{equation}\label{eq4.9}
    \forall~ {\bf m}\in\mathbb Z^2\setminus\{{\bf 0}\}, ~  a_{\bf m}^{\bf i}=m_1a_{\bf e_1}^{\bf i}.
\end{equation}

By \eqref{eq4.3} and \eqref{eq4.9}, we have $\det\binom{\bf n}{\bf m}b_{\bf m+n}^{\bf i}-\det\binom{\bf n+i}{\bf m}b_{\bf n}^{\bf i}=m_1a_{\bf e_1}^{\bf i}.$ We konw that for all ${\bf n}\in \mathbb Z^*\times \mathbb Z, $ there exists an ${\bf m}\in \mathbb Z^*\times \mathbb Z$ such that $\det\binom{\bf n}{\bf m}=0,$ so we get $m_1i_2b_{\bf n}^{\bf i}=m_1a_{\bf e_1}^{\bf i}.$ 
This shows that \begin{equation}\label{eq4.10}
    \forall~ {\bf n}\in\mathbb Z^*\times \mathbb Z,~ b_{\bf n}=\frac{a_{\bf e_1}^{\bf i}}{i_2}.
\end{equation}
Taking ${\bf m}\in \{0\}\times \mathbb Z^*$ and ${\bf n}\in \mathbb Z^*\times \mathbb Z$ in identity \eqref{eq4.4}, we obtain $(m_2+i_2)b_{\bf m}^{\bf i}=a_{\bf e_1}^{\bf i}+m_2b_{\bf n}^{\bf i}.$ Substituting \eqref{eq4.10} yields $(m_2+i_2)b_{\bf m}^{\bf i}=(m_2+i_2)\frac{a_{\bf e_1}^{\bf i}}{i_2}.$ So  $$b_{(0, m_2)}^{\bf i}=\frac{a_{\bf e_1}^{\bf i}}{i_2}, ~\forall~ m_2\in\mathbb Z^*\setminus\{-i_2\}.$$
Taking ${\bf m}\in\mathbb Z^*\times \mathbb Z$ and ${\bf n}=(0, -i_2)$ in identity \eqref{eq4.3}, we have $$b_{(0, -i_2)}^{\bf i}=\frac{a_{\bf e_1}^{\bf i}}{i_2}.$$
In summary, $$b_{\bf m}^{\bf i}=\frac{a_{\bf e_1}^{\bf i}}{i_2}, ~\forall ~{\bf m}\in \mathbb Z^2\setminus\{{\bf 0}\}.$$ 
And we know that for all $ {\bf m}\in \mathbb Z^2\setminus\{{\bf 0}\},  b_{\bf m}^{\bf i}$ is constant, we denote it by $\lambda_{\bf i}$. Then $a_{\bf e_1}^{\bf i}=i_2\lambda_{\bf i}, $ and $$\forall ~{\bf m}\in\mathbb Z^2\setminus\{{\bf 0}\},~ a_{\bf m}^{\bf i}=\lambda_{\bf i} m_1i_2=\lambda_{\bf i}\det\binom{\bf m}{\bf i}.$$
Therefore, for all ${\bf m}\in \mathbb Z^2\setminus\{{\bf 0}\},$ 
$$D_{\bf i}(L_{\bf m})=\lambda_{\bf i}\det\binom{\bf m}{\bf i} G_{\bf m+i}=\lambda_{\bf i} {\rm ad}G_{\bf i}(L_{\bf m}), $$
$$D_{\bf i}(G_{\bf m})=\lambda_{\bf i} L_{\bf m+i}=\lambda_{\bf i}{\rm ad}G_{\bf i}(G_{\bf m}).$$
Thus, $D_{\bf i}=\lambda_{\bf i}{\rm ad}G_{\bf i}.$\\
{\bf Case 3.} ${\bf i}\in \mathbb Z^*\times \mathbb Z^*.$

Taking ${\bf n=e_1}$ in identity \eqref{eq4.2}, we have 
$$m_2a_{\bf m+e_1}^{\bf i}=(m_2+i_2)a_{\bf m}^{\bf i}+\left(m_2+\det\binom{\bf i}{\bf m}\right)a_{\bf e_1}^{\bf i}, ~\forall ~{\bf m}\in\mathbb Z^2\setminus\{{\bf 0}\}.$$
Setting $m_2=0,$ we obtain 
\begin{equation}\label{eq4.100}
    a_{(m_1, 0)}^{\bf i}=m_1a_{\bf e_1}^{\bf i}, ~\forall ~m_1\in\mathbb Z^*.
\end{equation}
Taking ${\bf n=e_2}$ in identity \eqref{eq4.2}, we have 
$$m_1a_{\bf m+e_2}^{\bf i}=(m_1+i_1)a_{\bf m}^{\bf i}+\left(m_1-\det\binom{\bf i}{\bf m}\right)a_{\bf e_2}^{\bf i}.$$
Setting $m_1=0$, we obtain 
\begin{equation}\label{eq4.111}
    a_{(0, m_2)}^{\bf i}=m_2a_{\bf e_2}^{\bf i}, ~\forall ~ m_2\in\mathbb Z^*.
\end{equation}
 Taking $n_1=0$ and $m_2=0$ in identity \eqref{eq4.2}, we get
 $$m_1n_2a_{(m_1, n_2)}^{\bf i}=(m_1n_2+i_1n_2)a_{(m_1, 0)}^{\bf i}+(m_1n_2+m_1i_2)a_{(0, n_2)}^{\bf i}, ~\forall~m_1, n_2\in\mathbb Z^*. $$
 Sustituting $a_{(m_1, 0)}^{\bf i}=m_1a_{\bf e_1}^{\bf i}$ and $a_{(0, m_2)}^{\bf i}=m_2a_{\bf e_2}^{\bf i}$ we get \begin{equation}\label{eq4.11}
     a_{(m_1, n_2)}^{\bf i}=(m_1+i_1)a_{\bf e_1}^{\bf i}+(n_2+i_2)a_{\bf e_2}^{\bf i}, ~\forall ~m_1, n_2\in \mathbb Z^*.
 \end{equation}
 Substituting \eqref{eq4.11} into \eqref{eq4.2}, for all ${\bf m, n}\in\mathbb Z^*\times\mathbb Z^*$ such that ${\bf m+n}\in\mathbb Z^*\times\mathbb Z^*,$ we have 
 \begin{eqnarray*}
    && \det\binom{\bf n}{\bf m}\left((m_1+n_1+i_1)a_{\bf e_1}^{\bf i}+(m_2+n_2+i_2)a_{\bf e_2}^{\bf i}\right)\\
    &=&\left(\det\binom{\bf n}{\bf m}+\det\binom{\bf n}{\bf i}\right)\left((m_1+i_1)a_{\bf e_1}^{\bf i}+(m_2+i_2)a_{\bf e_2}^{\bf i}\right)\\
    &&+\left(\det\binom{\bf n}{\bf m}+\det\binom{\bf i}{\bf m}\right)\left((n_1+i_1)a_{\bf e_1}^{\bf i}+(n_2+i_2)a_{\bf e_2}^{\bf i}\right)
 \end{eqnarray*}
 Setting ${\bf m}=(i_1, -i_2), {\bf n}=(-i_1, i_2)$ yields $i_1a_{\bf e_1}^{\bf i}+i_2a_{\bf e_2}^{\bf i}=0.$ Then by \eqref{eq4.11}, we obtain $$a_{\bf m}^{\bf i}=m_1a_{\bf e_1}^{\bf i}+m_2a_{\bf e_2}^{\bf i}, ~\forall ~{\bf m}\in \mathbb Z^*\times\mathbb Z^*.$$
 In summsary, $$a_{\bf m}^{\bf i}=m_1a_{\bf e_1}^{\bf i}+m_2a_{\bf e_2}^{\bf i}, ~\forall ~{\bf m}\in \mathbb Z^2\setminus\{{\bf 0}\}.$$
 Taking ${\bf m}={\bf e_1}$ in identity \eqref{eq4.3}, we have 
 $$-n_2b_{(n_1+1, n_2)}^{\bf i}=a_{\bf e_1}^{\bf i}-(n_2+i_2)b_{(n_1, n_2)}^{\bf i}, ~\forall ~{\bf n}\in\mathbb Z^2\setminus\{{\bf 0}\}.$$
 Setting $n_2=0$ yields $i_2b_{(n_1, 0)}^{\bf i}=a_{\bf e_1}^{\bf i},$ so 
\begin{equation}\label{eq4.13}
    b_{(n_1, 0)}^{\bf i}=\frac{a_{\bf e_1}^{\bf i}}{i_2}, ~\forall ~n_1\in\mathbb Z^*.
\end{equation}
 Taking ${\bf m}={\bf e_2}$ in identity \eqref{eq4.3}, we have 
 $$n_1b_{(n_1, n_2+1)}^{\bf i}=a_{\bf e_2}^{\bf i}+(n_1+i_1)b_{(n_1, n_2)}^{\bf i}, ~\forall ~{\bf n}\in\mathbb Z^2\setminus\{{\bf 0}\}.$$
  Setting $n_1=0$ yields $i_1b_{(0, n_2)}^{\bf i}=-a_{\bf e_1}^{\bf i},$ so 
 \begin{equation}\label{eq4.14}
     b_{(0, n_2)}^{\bf i}=-\frac{a_{\bf e_2}^{\bf i}}{i_1}=\frac{a_{\bf e_1}^{\bf i}}{i_2}, ~\forall~n_2\in\mathbb Z^*.
 \end{equation}
 Taking ${\bf m}=(n_1, 0)$ and ${\bf n}=(0, n_2)$ in identity\eqref{eq4.3}, and substituting \eqref{eq4.100} and \eqref{eq4.14}, we deduce 
 $$b_{\bf n}^{\bf i}=\frac{a_{\bf e_1}^{\bf i}}{i_2}, ~\forall ~{\bf n}\in\mathbb Z^*\times\mathbb Z^*.$$
 Therefore, for all ${\bf n}\in\mathbb Z^2\setminus\{{\bf 0}\},$ $b_{\bf n}^{\bf i}$ is constant, we denote it by $\lambda_{\bf i}'$. Then by \eqref{eq4.100} and \eqref{eq4.111}, we obtain $$a_{(n_1, 0)}^{\bf i}=n_1i_2\lambda_{\bf i}'=\det\binom{\bf n}{\bf i}\lambda_{\bf i}', ~\forall ~n_1\in\mathbb Z^*.$$
 $$a_{(0, n_2)}^{\bf i}=-n_2i_1\lambda_{\bf i}'=\det\binom{\bf n}{\bf i}\lambda_{\bf i}', ~\forall ~n_1\in\mathbb Z^*.$$
 Taking ${\bf m}=(n_1, 0)$ and ${\bf n}=(0, n_2)$ in identity\eqref{eq4.4}, and substituting \eqref{eq4.13} and \eqref{eq4.14}, we deduce 
 $$a_{(n_1, n_2)}^{\bf i}=(n_1i_2-i_1n_2)\lambda_{\bf i}'=\det\binom{\bf n}{\bf i}\lambda_{\bf i}', ~\forall~{\bf n}\in\mathbb Z^*\times \mathbb Z^*.$$
 Therefore, for all ${\bf m}\in\mathbb Z^2\setminus\{{\bf 0}\}$,
 $$D_{\bf i}(L_{\bf m})=\lambda_{\bf i}'\det\binom{\bf m}{\bf i}G_{\bf m+i}=\lambda_{\bf i}'{\rm ad}G_{\bf i}(L_{\bf m}),$$
 $$D_{\bf i}(G_{\bf m})=\lambda_{\bf i}'L_{\bf m+i}=\lambda_{\bf i}'{\rm ad}G_{\bf i}(G_{\bf m}).$$
 Thus, $D_{\bf i}=\lambda_{\bf i}'{\rm ad}G_{\bf i}.$
\end{proof}
\begin{lemma}\label{lem4.4}
    ${\rm Der}(L(V))_{\overline 0}={\rm ad}(L(V))_{\overline 0}+\mathbb C d_1+\mathbb C d_2,$ where $d_1$ and $d_2$ are defined as follow:
 \begin{eqnarray*}
      \forall ~ {\bf m}\in \mathbb Z^2\setminus\{{\bf 0}\}, ~&&d_1(L_{\bf m})=m_1L_{\bf m}, ~ d_2(L_{\bf m})=m_2L_{\bf m},\\
 &&d_1(G_{\bf m})=m_1G_{\bf m}, ~ d_2(G_{\bf m})=m_2G_{\bf m}.
 \end{eqnarray*}
\end{lemma}
\begin{proof}
    Assuming $D_{\bf i}\in {\rm Der}(L(V))_{\overline 0}\cap {\rm Der}(L(V))_{\bf i}$ for some $\bf i\in \mathbb Z^2,$ we have 
    $$D_{\bf i}(L_{\bf m})=c_{\bf m}^{\bf i}L_{\bf m+i},$$
    $$D_{\bf i}(G_{\bf m})=d_{\bf m}^{\bf i}G_{\bf m+i}$$
    for all ${\bf m}\in\mathbb Z^2,$ where $c_{\bf m}^{\bf i}, d_{\bf m}^{\bf i}\in \mathbb C.$

    Given that
 \begin{center}
 {$D_{\bf i}([L_{\bf m}, L_{\bf n}])=[D_{\bf i}(L_{\bf m}), L_{\bf n}]+[L_{\bf m}, D_{\bf i}(L_{\bf n})]$~ for ${\bf m, n}\in \mathbb Z^2\setminus\{\bf 0\},$}
 \end{center}
    we obtain 
    \begin{equation}\label{eq4.16}
       \det\binom{\bf n}{\bf m}c_{\bf m+n}^{\bf i}=\det\binom{\bf n}{\bf m+i}c_{\bf m}^{\bf i}+\det\binom{\bf n+i}{\bf m}c_{\bf n}^{\bf i}.
    \end{equation}
Since \begin{center}
    $D_{\bf i}([L_{\bf m}, G_{\bf n}])=[D_{\bf i}(L_{\bf m}), G_{\bf n}]+[L_{\bf m}, D_{\bf i}(G_{\bf n})]$~ for  ${\bf m, n}\in \mathbb Z^2\setminus\{\bf 0\},$
\end{center}
we have  \begin{equation}\label{eq4.17}
       \det\binom{\bf n}{\bf m}d_{\bf m+n}^{\bf i}=\det\binom{\bf n}{\bf m+i}c_{\bf m}^{\bf i}+\det\binom{\bf n+i}{\bf m}d_{\bf n}^{\bf i}.
    \end{equation}
Considering that 
\begin{center}
 {$D_{\bf i}([G_{\bf m}, G_{\bf n}])=[D_{\bf i}(G_{\bf m}), G_{\bf n}]+[G_{\bf m}, D_{\bf i}(G_{\bf n})]$~ for ${\bf m, n}\in \mathbb Z^2\setminus\{\bf 0\},$}
 \end{center}
 we deduce 
 \begin{equation}\label{eq4.18}
       c_{\bf m+n}^{\bf i}=d_{\bf m}^{\bf i}+d_{\bf n}^{\bf i}.
    \end{equation}
{\bf Case 1.} ${\bf i=0}.$ 

Taking ${\bf n=e_1}$ in identity \eqref{eq4.16}, we get 
$$c_{(m_1+1, m_2)}^{\bf 0}=c_{\bf m}^{\bf 0}+c_{\bf e_1}^{\bf 0}, ~\forall ~{\bf m}\in \mathbb Z\times \mathbb Z^*$$
Then we see that for a fixed $m_2\in\mathbb Z^*,$ $(c_{(m_1, m_2)})_{m_1\in \mathbb Z}$ is an arithmetic sequence with common difference $c_{\bf e_1}^{\bf 0}$,
so \begin{equation}\label{eq4.19}
    c_{\bf m}^{\bf 0}=c_{(0, m_2)}^{\bf 0}+m_1c_{\bf e_1}^{\bf 0}, ~\forall ~{\bf m}\in \mathbb Z\times \mathbb Z^*.
\end{equation}
Similarly, taking ${\bf n=e_2}$ in identity \eqref{eq4.16}, we deduce 
\begin{equation}\label{eq4.20}
   c_{\bf m}^{\bf 0}=c_{(m_1, 0)}^{\bf 0}+m_2c_{\bf e_2}^{\bf 0}, ~\forall ~{\bf m}\in \mathbb Z^*\times \mathbb Z.
\end{equation}
Taking ${\bf m}=(m_1, 0)$ and ${\bf n}=(0, m_2)$ in identity \eqref{eq4.16}, we have 
$$c_{(m_1, m_2)}^{\bf 0}=c_{(m_1, 0)}^{\bf 0 }+c_{(0, m_2)}^{\bf 0}, ~\forall~(m_1, m_2)\in\mathbb Z^*\times\mathbb Z^*.$$
Then combining with identities \eqref{eq4.19} and \eqref{eq4.20}, we obtain 
\begin{equation}\label{eq4.21}
    c_{(m_1, 0)}^{\bf 0}=m_1c_{\bf e_1}^{\bf 0}, ~\forall ~m_1\in\mathbb Z^*,
\end{equation}
\begin{equation}\label{eq4.22}
    c_{(0, m_2)}^{\bf 0}=m_2c_{\bf e_2}^{\bf 0}, ~\forall ~m_2\in\mathbb Z^*.
\end{equation}
And then we get 

    $$c_{(m_1, m_2)}^{\bf 0}=m_1c_{\bf e_1}^{\bf 0}+m_2c_{\bf e_2}^{\bf 0}, ~\forall~(m_1, m_2)\in\mathbb Z^*\times\mathbb Z^*.$$
Thus,   \begin{equation}\label{eq4.23}  
c_{(m_1, m_2)}^{\bf 0}=m_1c_{\bf e_1}^{\bf 0}+m_2c_{\bf e_2}^{\bf 0}, ~\forall~(m_1, m_2)\in\mathbb Z^2\setminus\{{\bf 0}\}.
\end{equation}
Taking ${\bf n=e_1}$ and ${\bf n=e_2}$ in identity \eqref{eq4.17} respectively, we get 
\begin{equation}\label{eq4.24}
     d_{(m_1, m_2)}^{\bf 0}=(m_1-1)c_{\bf e_1}^{\bf 0}+m_2c_{\bf e_2}^{\bf 0}+d_{\bf e_1}^{\bf 0}, ~\forall~ (m_1, m_2)\in\mathbb Z\times \mathbb Z^*,
\end{equation}
\begin{equation}\label{eq4.25}
     d_{(m_1, m_2)}^{\bf 0}=m_1c_{\bf e_1}^{\bf 0}+(m_2-1)c_{\bf e_2}^{\bf 0}+d_{\bf e_2}^{\bf 0}, ~\forall~ (m_1, m_2)\in\mathbb Z^*\times \mathbb Z.
\end{equation}
Taking ${\bf m}=(m_1, 0), {\bf n}=(0, m_2)$ in identity \eqref{eq4.17} and substututing \eqref{eq4.21}, we have 
$$d_{(m_1, m_2)}^{\bf 0}=m_1c_{\bf e_1}^{\bf 0}+d_{(0, m_2)}^{\bf 0}, ~\forall ~(m_1, m_2)\in\mathbb Z^*\times \mathbb Z^*.$$
Combining with \eqref{eq4.24} and \eqref{eq4.25}, we get 
\begin{equation}\label{eq4.26}
    d_{(0, m_2)}^{\bf 0}=m_2c_{\bf e_2}^{\bf 0}+d_{\bf e_1}^{\bf 0}-c_{\bf e_1}^{\bf 0}=m_2c_{\bf e_2}^{\bf 0}+d_{\bf e_2}^{\bf 0}-c_{\bf e_2}^{\bf 0}, ~\forall ~m_2\in\mathbb Z^*.
\end{equation}
So 
    \begin{equation}\label{eq4.27}
        d_{\bf e_1}^{\bf 0}-c_{\bf e_1}^{\bf 0}=d_{\bf e_2}^{\bf 0}-c_{\bf e_2}^{\bf 0}.
    \end{equation}
Similarly, taking ${\bf m}=(0, m_2), {\bf n}=(m_1, 0)$ in identity \eqref{eq4.17}, we deduce 
\begin{equation}\label{eq4.28}
d_{(m_1, 0)}^{\bf 0}=m_1c_{\bf e_1}^{\bf 0}+d_{\bf e_1}^{\bf 0}-c_{\bf e_1}^{\bf 0}=m_1c_{\bf e_1}^{\bf 0}+d_{\bf e_2}^{\bf 0}-c_{\bf e_2}^{\bf 0}, ~\forall~m_1\in\mathbb Z^*.
\end{equation}
Taking ${\bf m}=(m_1, 0), {\bf n}=(0, m_2)$ in identity \eqref{eq4.18} and substituting \eqref{eq4.23}, \eqref{eq4.26}, \eqref{eq4.28},  we obtain
$$m_1c_{\bf e_1}^{\bf 0}+m_2c_{\bf e_2}^{\bf 0}=m_1c_{\bf e_1}^{\bf 0}+d_{\bf e_1}^{\bf 0}-c_{\bf e_1}^{\bf 0}+m_2c_{\bf e_2}^{\bf 0}+d_{\bf e_1}^{\bf 0}-c_{\bf e_1}^{\bf 0}, ~\forall ~(m_1, m_2)\in\mathbb Z^*\times \mathbb Z^*.$$
Then we get $d_{\bf e_1}^{\bf 0}=c_{\bf e_1}^{\bf 0},$ and $d_{\bf e_2}^{\bf 0}=c_{\bf e_2}^{\bf 0}.$ 

Thus, $$d_{(m_1, m_2)}^{\bf 0}=m_1c_{\bf e_1}^{\bf 0}+m_2c_{\bf e2}^{\bf 0}, ~\forall ~(m_1, m_2)\in \mathbb Z^2\setminus\{{\bf 0}\}.$$

Therefore, for all ${\bf m}\in\mathbb Z^2\setminus\{{\bf 0}\}$,  $$D_{\bf 0}(L_{\bf m})=(m_1c_{\bf e_1}^{\bf 0}+m_2c_{\bf e_2}^{\bf 0})L_{\bf m}=(c_{\bf e_1}^{\bf 0}d_1+c_{\bf e_2}^{\bf 0}d_2)(L_{\bf m}),$$ 
$$D_{\bf 0}(G_{\bf m})=(m_1c_{\bf e_1}^{\bf 0}+m_2c_{\bf e_2}^{\bf 0})G_{\bf m}=(c_{\bf e_1}^{\bf 0}d_1+c_{\bf e_2}^{\bf 0}d_2)(G_{\bf m}).$$
Thus, $$D_{\bf 0}=c_{\bf e_1}^{\bf 0}d_1+c_{\bf e_2}^{\bf 0}d_2. $$
{\bf Case 2.} ${\bf i}\in \{0\}\times \mathbb Z^*$ or $\mathbb Z^*\times\{0\}.$ Without loss of genrality, we assume ${\bf i}\in \{0\}\times \mathbb Z^*.$

By the same arguments as for {\bf Case 2} in Lemma \ref{lem4.3}, we obtain 
\begin{equation}\label{eq4.29}
{\bf m}\in \mathbb Z^2\setminus\{{\bf 0}\}, c_{\bf m}=m_1c_{\bf e_1}^{\bf i} 
\end{equation} 
Taking ${\bf n=e_1}$ in identity \eqref{eq4.17} and substituting \eqref{eq4.29}, we have 
$$m_2d_{(m_1+1, m_2)}^{\bf i}=m_1m_2c_{\bf e_1}^{\bf i}+m_2d_{\bf e_1}^{\bf i}+m_1i_2(c_{\bf e_1}^{\bf i}-d_{\bf e_1}^{\bf i}), ~\forall~ {\bf m}\in \mathbb Z^2\setminus\{{\bf 0}\}.$$
Setting $m_2=0, m_1\neq 0,$ we get $c_{\bf e_1}^{\bf i}=d_{\bf e_1}^{\bf i}.$ Then 
$$\forall {\bf m}\in\mathbb Z\times \mathbb Z^*, ~d_{\bf m}^{\bf i}=m_1c_{\bf e_1}^{\bf i}.$$
Taking ${\bf n=e_2}$ in identity \eqref{eq4.17} and substituting \eqref{eq4.29}, we have 
$$d_{(m_1, m_2)}^{\bf i}=d_{(m_1, m_2+1)}^{\bf i}=m_1c_{\bf e_1}^{\bf i}+(i_2+1)d_{\bf e_2}^{\bf i}, ~\forall ~(m_1, m_2)\in\mathbb Z^*\times \mathbb Z.$$
Comparing the two identities above, we obtain $(i_2+1)d_{\bf e_2}^{\bf i}=0.$ So 
$$\forall ~{\bf m}\in \mathbb Z^*\times\mathbb Z,~ d_{\bf m}^{\bf i}=m_1c_{\bf e_1}^{\bf i}.$$
Thus, $$\forall ~{\bf m}\in \mathbb Z^2\setminus\{{\bf 0}\},~ d_{\bf m}^{\bf i}=m_1c_{\bf e_1}^{\bf i}.$$
Therefore, for all ${\bf m}\in \mathbb Z^2\setminus\{{\bf 0}\},$
$$D_{\bf i}(L_{\bf m})=m_1c_{\bf e_1}^{\bf i}L_{\bf m+i}=\frac{c_{\bf e_1}^{\bf i}}{i_2}\det\binom{\bf m}{\bf i}L_{\bf m+i}=\frac{c_{\bf e_1}^{\bf i}}{i_2}{\rm ad}L_{\bf i}(L_{\bf m}),$$
$$D_{\bf i}(G_{\bf m})=m_1c_{\bf e_1}^{\bf i}G_{\bf m+i}=\frac{c_{\bf e_1}^{\bf i}}{i_2}\det\binom{\bf m}{\bf i}G_{\bf m+i}=\frac{c_{\bf e_1}^{\bf i}}{i_2}{\rm ad}L_{\bf i}(G_{\bf m}).$$
Thus, $$D_{\bf i}=\dfrac{c_{\bf e_1}^{\bf i}}{i_2}{\rm ad}L_{\bf i}.$$
{\bf Case 3.} ${\bf i}\in\mathbb Z^*\times \mathbb Z^*.$

By the same arguments as for {\bf Case 3} in Lemma \ref{lem4.3}, we obtain 
\begin{equation}\label{eq.60}
    c_{\bf m}^{\bf i}=m_1c_{\bf e_1}^{\bf i}+m_2c_{\bf e_2}^{\bf i}, ~\forall ~{\bf m}\in\mathbb Z^2\setminus\{{\bf 0}\}.
\end{equation}
Taking ${\bf n=e_1}$ in identity \eqref{eq4.17}, we have
\begin{equation}\label{eq4.61}
    m_2d_{(m_1+1, m_2)}^{\bf i}=(m_2+i_2)c_{\bf m}^{\bf i}+(m_2(1+i_1)-m_1i_2)d_{\bf e_1}^{\bf i}, ~\forall~(m_1, m_2)\in\mathbb Z^2\setminus\{{\bf 0}\}.
\end{equation}
 Setting $m_2=0$ yields 
$i_2m_1(c_{\bf e_1}^{\bf i}-d_{\bf e_1}^{\bf i})=0, ~\forall~ m_1\in\mathbb Z^*.$
So $c_{\bf e_1}^{\bf i}=d_{\bf e_1}^{\bf i}.$ Substituting this identity into \eqref{eq4.61}, we get \begin{equation}\label{eq4.62}
    d_{(m_1, m_2)}^{\bf i}=(m_1+i_1)c_{\bf e_1}^{\bf i}+(m_2+i_2)c_{\bf e_2}^{\bf i}, ~\forall ~(m_1, m_2)\in\mathbb Z\times\mathbb Z^*.
\end{equation}
Taking ${\bf n=e_2}$ in identity \eqref{eq4.17}, we have
\begin{equation}\label{eq4.63}
    m_1d_{(m_1, m_2+1)}^{\bf i}=(m_1+i_1)c_{\bf m}^{\bf i}+(m_1(1+i_2)-m_2i_1)d_{\bf e_2}^{\bf i}, ~\forall~(m_1, m_2)\in\mathbb Z^2\setminus\{{\bf 0}\}.
\end{equation}
Setting $m_1=0$ yields $i_1m_2(c_{\bf e_2}^{\bf i}-d_{\bf e_2}^{\bf i})=0, ~\forall~ m_2\in\mathbb Z^*.$ So $c_{\bf e_2}^{\bf i}=d_{\bf e_2}^{\bf i}.$ Substituting this identity into \eqref{eq4.63}, we get 
\begin{equation}\label{eq4.64}
    d_{(m_1, m_2)}^{\bf i}=(m_1+i_1)c_{\bf e_1}^{\bf i}+(m_2+i_2)c_{\bf e_2}^{\bf i}, ~\forall ~(m_1, m_2)\in\mathbb Z^*\times\mathbb Z.
\end{equation}
Thus \begin{equation}\label{eq4.65}
    d_{(m_1, m_2)}^{\bf i}=(m_1+i_1)c_{\bf e_1}^{\bf i}+(m_2+i_2)c_{\bf e_2}^{\bf i}, ~\forall ~(m_1, m_2)\in\mathbb Z^2\setminus\{{\bf 0}\}.
\end{equation}
Taking ${\bf m}=(m_1, 0)$ and ${\bf n}=(0, m_2)$ in identity \eqref{eq4.18}, we obtain 
\begin{equation}\label{eq4.66}
     i_1c_{\bf e_1}^{\bf i}+i_2c_{\bf e_2}^{\bf i}=0。
\end{equation}
Then by \eqref{eq4.64} we obtain $$ d_{(m_1, m_2)}^{\bf i}=m_1c_{\bf e_1}^{\bf i}+m_2c_{\bf e_2}^{\bf i}, ~\forall ~(m_1, m_2)\in\mathbb Z^2\setminus\{{\bf 0}\}.$$
Therefore, for all ${\bf m}\in\mathbb Z^2\setminus\{{\bf 0}\},$ 
$$D_{\bf i}(L_{\bf m})=(m_1c_{\bf e_1}^{\bf i}+m_2c_{\bf e_2}^{\bf i})L_{\bf m+i}=(m_1i_2\frac{c_{\bf e_1}^{\bf i}}{i_2}+m_2(-i_1)\frac{c_{\bf e_1}^{\bf i}}{-i_1})L_{\bf m+i},$$
$$D_{\bf i}(G_{\bf m})=(m_1c_{\bf e_1}^{\bf i}+m_2c_{\bf e_2}^{\bf i})L_{\bf m+i}=(m_1i_2\frac{c_{\bf e_1}^{\bf i}}{i_2}+m_2(-i_1)\frac{c_{\bf e_1}^{\bf i}}{-i_1})G_{\bf m+i}. $$
Putting $\lambda_{\bf i}=\frac{c_{\bf e_1}^{\bf i}}{i_2},$ then by \eqref{eq4.66} we know
 $\frac{c_{\bf e_1}^{\bf i}}{-i_1}=\lambda_{\bf i}, $ and 
 $$D_{\bf i}(L_{\bf m})=\lambda_{\bf i}\det\binom{\bf m}{\bf i}L_{\bf m+i}=\lambda_{\bf i}{\rm ad}L_{\bf i}(L_{\bf m}),$$
 $$D_{\bf i}(G_{\bf m})=\lambda_{\bf i}\det\binom{\bf m}{\bf i}G_{\bf m+i}=\lambda_{\bf i}{\rm ad}L_{\bf i}(G_{\bf m}).$$
Thus $$D_{\bf i}=\lambda_{\bf i}{\rm ad}L_{\bf i}.$$
\end{proof}
According to Lemma \ref{lem4.3} and Lemma \ref{lem4.4}, one can obtain the following theorem.
\begin{theorem}\label{thm4.5}
   $ {\rm Der}(L(V))={\rm ad}(L(V))+\mathbb Cd_1+\mathbb Cd_2, $ where $d_1$ and $d_2$ are defined as follow:
\begin{eqnarray*}
      \forall ~ {\bf m}\in \mathbb Z^2\setminus\{{\bf 0}\}, ~&&d_1(L_{\bf m})=m_1L_{\bf m}, ~ d_2(L_{\bf m})=m_2L_{\bf m},\\
 &&d_1(G_{\bf m})=m_1G_{\bf m}, ~ d_2(G_{\bf m})=m_2G_{\bf m}.
 \end{eqnarray*}
\end{theorem}
\begin{theorem}
    Let $(L(V), [\cdot, \cdot])$ be the Kantor Lie-double of Virasoro-like algebra. Then there are no non-trivial transposed $1$-Poisson superalgebra structrues defined on $(L(V), [\cdot, \cdot])$.
\end{theorem}
\begin{proof}
Suppose $(L(V), \cdot, [\cdot, \cdot])$ is a transposed $1$-Poisson superalgebra. According to Lemma \ref{lem2.1},  for all $x\in L(V)$, $L_x$ is a superderivation of $(L(V), [\cdot, \cdot])$ and $|L_x|=|x|$. By Lemma \ref{lem4.3} and Lemma \ref{lem4.4}, we know that for all ${\bf m, n}\in \mathbb Z^2\setminus\{{\bf 0}\}, $
\begin{eqnarray*}
    L_{\bf m}\cdot L_{\bf n}&=&D_{L_{\bf m}}(L_{\bf n})=(a_{L_{\bf m}}n_1+b_{L_{\bf m}}n_2)L_{\bf n}+\sum\limits_{{\bf i}\in\mathbb Z^2\setminus\{{\bf 0}\}}c_{\bf i}[L_{\bf i}, L_{\bf n}]\\
    &=&(a_{L_{\bf m}}n_1+b_{L_{\bf m}}n_2)L_{\bf n}+\sum\limits_{{\bf i}\in\mathbb Z^2\setminus\{{\bf 0}\}}c_{\bf i}\det\binom{\bf n}{\bf i}L_{\bf n+i}\\
     L_{\bf n}\cdot L_{\bf m}&=&D_{L_{\bf n}}(L_{\bf m})=(a_{L_{\bf n}}m_1+b_{L_{\bf n}}m_2)L_{\bf m}+\sum\limits_{{\bf i}\in\mathbb Z^2\setminus\{{\bf 0}\}}d_{\bf i}[L_{\bf i}, L_{\bf m}]\\
     &=&(a_{L_{\bf n}}m_1+b_{L_{\bf n}}m_2)L_{\bf m}+\sum\limits_{{\bf i}\in\mathbb Z^2\setminus\{{\bf 0}\}}d_{\bf i}\det\binom{\bf m}{\bf i}L_{\bf m+i}\\
          G_{\bf m}\cdot L_{\bf n}&=&D_{G_{\bf m}}(L_{\bf n})=\sum\limits_{{\bf i}\in\mathbb Z^2\setminus\{{\bf 0}\}}e_{\bf i}[G_{\bf i}, L_{\bf n}]=\sum\limits_{{\bf i}\in\mathbb Z^2\setminus\{{\bf 0}\}}e _{\bf i}\det\binom{\bf n}{\bf i}G_{\bf n+i}\\
          L_{\bf n}\cdot G_{\bf m}&=&D_{L_{\bf n}}(G_{\bf m})=(a_{L_{\bf n}}m_1+b_{L_{\bf n}}m_2)G_{\bf m}+\sum\limits_{{\bf i}\in\mathbb Z^2\setminus\{{\bf 0}\}}d_{\bf i}[L_{\bf i}, G_{\bf m}]\\
          &=&(a_{L_{\bf n}}m_1+b_{L_{\bf n}}m_2)G_{\bf m}+\sum\limits_{{\bf i}\in\mathbb Z^2\setminus\{{\bf 0}\}}d_{\bf i}\det\binom{\bf m}{\bf i}G_{\bf m+i}\\
           G_{\bf m}\cdot G_{\bf n}&=&D_{G_{\bf m}}(G_{\bf n})=\sum\limits_{{\bf i}\in\mathbb Z^2\setminus\{{\bf 0}\}}e_{\bf i}[G_{\bf i}, G_{\bf n}]=\sum\limits_{{\bf i}\in\mathbb Z^2\setminus\{{\bf 0}\}}e_{\bf i}L_{\bf n+i}\\
           G_{\bf n}\cdot G_{\bf m}&=&D_{G_{\bf n}}(G_{\bf m})=\sum\limits_{{\bf i}\in\mathbb Z^2\setminus\{{\bf 0}\}}f_{\bf i}[G_{\bf i}, G_{\bf m}]=\sum\limits_{{\bf i}\in\mathbb Z^2\setminus\{{\bf 0}\}}f_{\bf i}L_{\bf m+i}
      \end{eqnarray*}
Since $\cdot$ is supercommutative and $(L_{\bf m}, G_{\bf m})_{{\bf m}\in\mathbb Z^2\setminus\{0\}}$ is linear independent, we deduce 
$\forall ~{\bf i}\in\mathbb Z^2\setminus\{{\bf 0}\}, e_{\bf i}=f_{\bf i}=0, c_{\bf i}\det\binom{\bf n}{\bf i}=d_{\bf i}\det\binom{\bf m}{\bf i}=0, a_{L_{\bf m}}n_1+b_{L_{\bf m}}n_2=a_{L_{\bf n}}m_1+b_{L_{\bf n}}m_2=0.$

Therefore, for all ${\bf m, n}\in \mathbb Z^2\setminus\{{\bf 0}\}, $ 
$$L_{\bf m}\cdot L_{\bf n}=G_{\bf m}\cdot G_{\bf n}=G_{\bf m}\cdot L_{\bf n}=L_{\bf m}\cdot G_{\bf n}=0.$$
 
Thus $(L(V), \cdot, [\cdot, \cdot])$ is a trivial transposed $1$-Poisson superalgebra.
\end{proof}
In summary, the following conclusion is drawn:
\begin{theorem}
  The Lie superalgebra $L(V)$ admits no nontrivial transposed $\delta$-Poisson superalgebra structures.  
\end{theorem}
\bmhead{Acknowledgements}

The authors would like to thank the referee for helpful
comments and suggestions. The work is supported by the NSFC (12201624).

\bmhead{Conflict of interest statement}

The authors have no confilicts to disclose.

\bibliography{sn-bibliography}


\begin{thebibliography}{25}
\ifx \bisbn   \undefined \def \bisbn  #1{ISBN #1}\fi
\ifx \binits  \undefined \def \binits#1{#1}\fi
\ifx \bauthor  \undefined \def \bauthor#1{#1}\fi
\ifx \batitle  \undefined \def \batitle#1{#1}\fi
\ifx \bjtitle  \undefined \def \bjtitle#1{#1}\fi
\ifx \bvolume  \undefined \def \bvolume#1{\textbf{#1}}\fi
\ifx \byear  \undefined \def \byear#1{#1}\fi
\ifx \bissue  \undefined \def \bissue#1{#1}\fi
\ifx \bfpage  \undefined \def \bfpage#1{#1}\fi
\ifx \blpage  \undefined \def \blpage #1{#1}\fi
\ifx \burl  \undefined \def \burl#1{\textsf{#1}}\fi
\ifx \doiurl  \undefined \def \doiurl#1{\url{https://doi.org/#1}}\fi
\ifx \betal  \undefined \def \betal{\textit{et al.}}\fi
\ifx \binstitute  \undefined \def \binstitute#1{#1}\fi
\ifx \binstitutionaled  \undefined \def \binstitutionaled#1{#1}\fi
\ifx \bctitle  \undefined \def \bctitle#1{#1}\fi
\ifx \beditor  \undefined \def \beditor#1{#1}\fi
\ifx \bpublisher  \undefined \def \bpublisher#1{#1}\fi
\ifx \bbtitle  \undefined \def \bbtitle#1{#1}\fi
\ifx \bedition  \undefined \def \bedition#1{#1}\fi
\ifx \bseriesno  \undefined \def \bseriesno#1{#1}\fi
\ifx \blocation  \undefined \def \blocation#1{#1}\fi
\ifx \bsertitle  \undefined \def \bsertitle#1{#1}\fi
\ifx \bsnm \undefined \def \bsnm#1{#1}\fi
\ifx \bsuffix \undefined \def \bsuffix#1{#1}\fi
\ifx \bparticle \undefined \def \bparticle#1{#1}\fi
\ifx \barticle \undefined \def \barticle#1{#1}\fi
\bibcommenthead
\ifx \bconfdate \undefined \def \bconfdate #1{#1}\fi
\ifx \botherref \undefined \def \botherref #1{#1}\fi
\ifx \url \undefined \def \url#1{\textsf{#1}}\fi
\ifx \bchapter \undefined \def \bchapter#1{#1}\fi
\ifx \bbook \undefined \def \bbook#1{#1}\fi
\ifx \bcomment \undefined \def \bcomment#1{#1}\fi
\ifx \oauthor \undefined \def \oauthor#1{#1}\fi
\ifx \citeauthoryear \undefined \def \citeauthoryear#1{#1}\fi
\ifx \endbibitem  \undefined \def \endbibitem {}\fi
\ifx \bconflocation  \undefined \def \bconflocation#1{#1}\fi
\ifx \arxivurl  \undefined \def \arxivurl#1{\textsf{#1}}\fi
\csname PreBibitemsHook\endcsname

\bibitem[\protect\citeauthoryear{Bai et~al.}{2023}]{bib1}
\begin{barticle}
\bauthor{\bsnm{Bai}, \binits{C.}},
\bauthor{\bsnm{Bai}, \binits{R.}},
\bauthor{\bsnm{Guo}, \binits{L.}},
\bauthor{\bsnm{Wu}, \binits{Y.}}:
\batitle{Transposed {P}oisson algebras, {N}ovikov-{P}oisson algebras and
  3-{L}ie algebras}.
\bjtitle{J. Algebra.}
\bvolume{632},
\bfpage{535}--\blpage{566}
(\byear{2023})
\end{barticle}
\endbibitem

\bibitem[\protect\citeauthoryear{Sartayev}{2024}]{bib2}
\begin{barticle}
\bauthor{\bsnm{Sartayev}, \binits{B.}}:
\batitle{Some generalizations of the variety of transposed {P}oisson algebras}.
\bjtitle{Commun. Math.}
\bvolume{32},
\bfpage{55}--\blpage{62}
(\byear{2024})
\end{barticle}
\endbibitem

\bibitem[\protect\citeauthoryear{Fern\'andez~Ouaridi}{2024}]{bib3}
\begin{barticle}
\bauthor{\bsnm{Fern\'andez~Ouaridi}, \binits{A.}}:
\batitle{On the simple transposed {P}oisson algebras and {J}ordan
  superalgebras}.
\bjtitle{J. Algebra}
\bvolume{641},
\bfpage{173}--\blpage{198}
(\byear{2024})
\end{barticle}
\endbibitem

\bibitem[\protect\citeauthoryear{Abdurasulov et~al.}{2024}]{bib28}
\begin{barticle}
\bauthor{\bsnm{Abdurasulov}, \binits{K.}},
\bauthor{\bsnm{Adashev}, \binits{J.}},
\bauthor{\bsnm{Eshmeteva}, \binits{S.}}:
\batitle{Transposed {P}oisson structures on solvable {L}ie algebras with
  filiform nilradical}.
\bjtitle{Commun. Math}
\bvolume{32},
\bfpage{441}--\blpage{483}
(\byear{2024})
\end{barticle}
\endbibitem

\bibitem[\protect\citeauthoryear{Beites et~al.}{2024}]{bib4}
\begin{barticle}
\bauthor{\bsnm{Beites}, \binits{P.D.}},
\bauthor{\bsnm{Ferreira}, \binits{B.L.M.}},
\bauthor{\bsnm{Kaygorodov}, \binits{I.}}:
\batitle{Transposed {P}oisson structures}.
\bjtitle{Results Math.}
\bvolume{79},
\bfpage{93}
(\byear{2024})
\end{barticle}
\endbibitem

\bibitem[\protect\citeauthoryear{Ferreira et~al.}{2021}]{bib5}
\begin{barticle}
\bauthor{\bsnm{Ferreira}, \binits{B.L.M.}},
\bauthor{\bsnm{Kaygorodov}, \binits{I.}},
\bauthor{\bsnm{Lopatkin}, \binits{V.}}:
\batitle{$\frac{1}{2}$-derivations of {L}ie algebras and transposed {P}oisson
  algebras}.
\bjtitle{Rev. R. Acad. Cienc. Exactas Fís. Nat., Ser. A Mat.}
\bvolume{115},
\bfpage{142}
(\byear{2021})
\end{barticle}
\endbibitem

\bibitem[\protect\citeauthoryear{Yuan and Hua}{2022}]{bib6}
\begin{barticle}
\bauthor{\bsnm{Yuan}, \binits{L.}},
\bauthor{\bsnm{Hua}, \binits{Q.}}:
\batitle{$\frac{1}{2}$-(bi) derivations and transposed {P}oisson algebra
  structures on {L}ie algebras}.
\bjtitle{Linear Multilinear Algebra}
\bvolume{70},
\bfpage{7672}--\blpage{7701}
(\byear{2022})
\end{barticle}
\endbibitem

\bibitem[\protect\citeauthoryear{Kaygorodov and Khrypchenko}{2023a}]{bib7}
\begin{barticle}
\bauthor{\bsnm{Kaygorodov}, \binits{I.}},
\bauthor{\bsnm{Khrypchenko}, \binits{M.}}:
\batitle{Transposed {P}oisson structures on {B}lock {L}ie algebras and
  superalgebras}.
\bjtitle{Linear Algebra Appl.}
\bvolume{656},
\bfpage{167}--\blpage{197}
(\byear{2023})
\end{barticle}
\endbibitem

\bibitem[\protect\citeauthoryear{Kaygorodov and Khrypchenko}{2023b}]{bib8}
\begin{barticle}
\bauthor{\bsnm{Kaygorodov}, \binits{I.}},
\bauthor{\bsnm{Khrypchenko}, \binits{M.}}:
\batitle{Transposed {P}oisson structures on generalized {W}itt algebras and
  {B}lock {L}ie algebras}.
\bjtitle{Results Math.}
\bvolume{78},
\bfpage{186}
(\byear{2023})
\end{barticle}
\endbibitem

\bibitem[\protect\citeauthoryear{Kaygorodov and Khrypchenko}{2023c}]{bib9}
\begin{barticle}
\bauthor{\bsnm{Kaygorodov}, \binits{I.}},
\bauthor{\bsnm{Khrypchenko}, \binits{M.}}:
\batitle{Transposed {P}oisson structures on {W}itt type algebras}.
\bjtitle{Linear Algebra Appl.}
\bvolume{665},
\bfpage{196}--\blpage{210}
(\byear{2023})
\end{barticle}
\endbibitem

\bibitem[\protect\citeauthoryear{Kaygorodov}{2024}]{bib10}
\begin{barticle}
\bauthor{\bsnm{Kaygorodov}, \binits{I.}}:
\batitle{Non-associative algebraic structures: classification and structure}.
\bjtitle{Commun. Math.}
\bvolume{32},
\bfpage{1}--\blpage{62}
(\byear{2024})
\end{barticle}
\endbibitem

\bibitem[\protect\citeauthoryear{Abdelwahab et~al.}{2024}]{bib13}
\begin{botherref}
\oauthor{\bsnm{Abdelwahab}, \binits{H.}},
\oauthor{\bsnm{Kaygorodov}, \binits{I.}},
\oauthor{\bsnm{Sartayev}, \binits{B.}}:
$\delta$-{P}oisson and transposed $\delta$-{P}oisson algebras.
arXiv:2411.05490
(2024)
\end{botherref}
\endbibitem

\bibitem[\protect\citeauthoryear{V.}{1998}]{bib22}
\begin{barticle}
\bauthor{\bsnm{V.}, \binits{F.}}:
\batitle{$\delta$-derivations of {L}ie algebras}.
\bjtitle{Siberian Mathematical Journal}
\bvolume{39}(\bissue{6}),
\bfpage{1218}--\blpage{1230}
(\byear{1998})
\end{barticle}
\endbibitem

\bibitem[\protect\citeauthoryear{Daukeyeva et~al.}{2026}]{bib14}
\begin{barticle}
\bauthor{\bsnm{Daukeyeva}, \binits{N.}},
\bauthor{\bsnm{Eraliyeva}, \binits{M.}},
\bauthor{\bsnm{Toshtemirova}, \binits{F.}}:
\batitle{Transposed $\delta$-{P}oisson algebra structures on null-filiform
  associative algebras}.
\bjtitle{Commun. Math.}
\bvolume{34}(\bissue{1}),
\bfpage{1}--\blpage{27}
(\byear{2026})
\end{barticle}
\endbibitem

\bibitem[\protect\citeauthoryear{Kaygorodov et~al.}{2024}]{bib21}
\begin{botherref}
\oauthor{\bsnm{Kaygorodov}, \binits{I.}},
\oauthor{\bsnm{Khudoyberdiyev}, \binits{A.}},
\oauthor{\bsnm{Z.}, \binits{S.}}:
Quasi-derivations of witt and related algebras.
arXiv:2508.14914
(2024)
\end{botherref}
\endbibitem

\bibitem[\protect\citeauthoryear{Kirkman et~al.}{1994}]{bib23}
\begin{barticle}
\bauthor{\bsnm{Kirkman}, \binits{E.}},
\bauthor{\bsnm{Procesi}, \binits{C.}},
\bauthor{\bsnm{Small}, \binits{L.}}:
\batitle{A $q$-analog for the {V}irasoro algebra}.
\bjtitle{Comm. Algebra}
\bvolume{22},
\bfpage{3755}--\blpage{3774}
(\byear{1994})
\end{barticle}
\endbibitem

\bibitem[\protect\citeauthoryear{Jiang and Meng}{1998}]{bib19}
\begin{barticle}
\bauthor{\bsnm{Jiang}, \binits{C.}},
\bauthor{\bsnm{Meng}, \binits{D.}}:
\batitle{The automorphism group of the derivation algebra of the
  {V}irasoro-like algebra}.
\bjtitle{Advances in Mathematics}
\bvolume{27}(\bissue{2}),
\bfpage{175}--\blpage{183}
(\byear{1998})
\end{barticle}
\endbibitem

\bibitem[\protect\citeauthoryear{Dokovi\'c and Zhao}{1998}]{bib24}
\begin{barticle}
\bauthor{\bsnm{Dokovi\'c}, \binits{D.}},
\bauthor{\bsnm{Zhao}, \binits{K.}}:
\batitle{Some infinite-dimensional simple {L}ie algebras in characteristic 0
  related to those of {B}lock}.
\bjtitle{J. Pure Appl. Algebra}
\bvolume{127}(\bissue{2}),
\bfpage{153}--\blpage{165}
(\byear{1998})
\end{barticle}
\endbibitem

\bibitem[\protect\citeauthoryear{Lin and Tan}{2006}]{bib25}
\begin{barticle}
\bauthor{\bsnm{Lin}, \binits{W.}},
\bauthor{\bsnm{Tan}, \binits{S.}}:
\batitle{Nonzero level {H}arish-{C}handra modules over the {V}irasoro-like
  algebra}.
\bjtitle{J. Pure Appl. Algebra}
\bvolume{127}(\bissue{2}),
\bfpage{153}--\blpage{165}
(\byear{2006})
\end{barticle}
\endbibitem

\bibitem[\protect\citeauthoryear{Guo and Liu}{2019}]{bib26}
\begin{barticle}
\bauthor{\bsnm{Guo}, \binits{X.}},
\bauthor{\bsnm{Liu}, \binits{G.}}:
\batitle{Jet modules for the centerless {V}irasoro-like algebra}.
\bjtitle{J. Algebra Appl.}
\bvolume{18}(\bissue{1}),
\bfpage{1950002}
(\byear{2019})
\end{barticle}
\endbibitem

\bibitem[\protect\citeauthoryear{Tang et~al.}{2023}]{bib27}
\begin{barticle}
\bauthor{\bsnm{Tang}, \binits{X.}},
\bauthor{\bsnm{Xiao}, \binits{M.}},
\bauthor{\bsnm{Wang}, \binits{P.}}:
\batitle{Local properties of {V}irasoro-like algebra}.
\bjtitle{J. Pure Appl. Algebra}
\bvolume{204},
\bfpage{90}--\blpage{105}
(\byear{2023})
\end{barticle}
\endbibitem

\bibitem[\protect\citeauthoryear{Kantor}{1990}]{bib12}
\begin{barticle}
\bauthor{\bsnm{Kantor}, \binits{I.L.}}:
\batitle{Connection between {P}oisson brackets and {J}ordan and {L}ie
  superalgebras}.
\bjtitle{Lie Theory, Differential Equations and Representation Theory}
\bvolume{151},
\bfpage{213}--\blpage{225}
(\byear{1990})
\end{barticle}
\endbibitem

\bibitem[\protect\citeauthoryear{Zusmanovich}{2010}]{bib15}
\begin{barticle}
\bauthor{\bsnm{Zusmanovich}, \binits{P.}}:
\batitle{On $\delta$-derivations of {L}ie algebras and superalgebras}.
\bjtitle{J. Algebra}
\bvolume{324}(\bissue{12}),
\bfpage{3470}--\blpage{3486}
(\byear{2010})
\end{barticle}
\endbibitem

\bibitem[\protect\citeauthoryear{Erickson et~al.}{1975}]{bib16}
\begin{barticle}
\bauthor{\bsnm{Erickson}, \binits{T.S.}},
\bauthor{\bsnm{Martindale}, \binits{W.S.}},
\bauthor{\bsnm{Usborn}, \binits{J.M.}}:
\batitle{Prime nonassociative algebras}.
\bjtitle{Pacific J. Math.}
\bvolume{60}(\bissue{1}),
\bfpage{49}--\blpage{63}
(\byear{1975})
\end{barticle}
\endbibitem

\bibitem[\protect\citeauthoryear{Filippov}{1999}]{bib18}
\begin{barticle}
\bauthor{\bsnm{Filippov}, \binits{V.T.}}:
\batitle{$\delta$-derivations of prime {L}ie algebras}.
\bjtitle{Sib. Math. J.}
\bvolume{40},
\bfpage{174}--\blpage{184}
(\byear{1999})
\end{barticle}
\endbibitem

\end{thebibliography}

\end{document}